\renewcommand\Re{\mathop{{\rm Re}}}
\renewcommand\Im{\mathop{{\rm Im}}}
\newcommand{\Rb}{{\mathbb R}}
\newtheorem{thm}{Theorem}[section]
\newtheorem*{thm*}{Theorem}
\newtheorem{cor}[thm]{Corollary}
\newtheorem*{cor*}{Corollary}
\newtheorem{lem}[thm]{Lemma}
\newtheorem{prop}[thm]{Proposition}
\newtheorem*{con*}{Conjecture}
\newtheorem*{prob*}{Problem}
\theoremstyle{definition}
\newtheorem{defn}[thm]{Definition}
\theoremstyle{remark}
\newtheorem{rem}[thm]{Remark}
\newtheorem{ex}[thm]{Example}
\numberwithin{equation}{section}
\begin{document}

\noindent {\bf On Determinant Expansions for Hankel Operators}\par
Gordon Blower ${}^a$ and Yang Chen${}^b$\par
\vskip.1in
\noindent ${}^{a}$ \footnotesize{Corresponding author: g.blower@lancaster.ac.uk}\par
\footnotesize {Department of Mathematics and Statistics, Lancaster University,}\par
\footnotesize{ Lancaster, LA14YF, United Kingdom}\par
\vskip.05in
\footnotesize{$^{b}$Department of Mathematics, University of Macau,}\par
\footnotesize{Avenida da Universidade, Taipa, Macau, China}\par

\vskip.1in
17 January 2019\par
\vskip.1in

\noindent {\bf Abstract} Let $w$ be a semiclassical weight which is generic in Magnus's sense, and $(p_n)_{n=0}^\infty$ the corresponding sequence of orthogonal polynomials. The paper expresses the Christoffel--Darboux kernel 
as a sum of products of Hankel integral operators. For $\psi\in L^\infty (i{\mathbb R})$, let $W(\psi )$ be the Wiener-Hopf operator with symbol $\psi$. The paper gives sufficient conditions on $\psi$ such that $1/\det W(\psi )W(\psi^{-1})=\det (I-\Gamma_{\phi_1}\Gamma_{\phi_2})$ where $\Gamma_{\phi_1}$ and $\Gamma_{\phi_2}$ are Hankel operators that are Hilbert--Schmidt. For certain $\psi$, Barnes's integral leads to an expansion of  this determinant in terms of the generalised hypergeometric ${}_nF_m$. These results extend those of Basor and Chen [2], who obtained ${}_4F_3$ likewise. The paper includes examples where the Wiener--Hopf factors are found explicitly. \par
\vskip.1in
Key words: orthogonal polynomials, special functions, Wiener-Hopf, linear systems\par
\vskip.1in

{\bf Acknowledgements} Gordon Blower acknowledges the generous support as a Distinguished Visiting Scholar at the University of Macau. Yang Chen gratefully acknowledges the generous support of the Macau Science and Technology Development Fund under the grant numbers FDCT 130/2014/ A3 and FDCT 023/2017/A1 and the University of Macau through MYRG2014-00011-FST and MYRG2014-0004-FST.\par
\vskip.1in

\begin{section}{Introduction}

\begin{defn} (i) Let $\phi\in L^2(0, \infty )$. Then the Hankel operator with scattering function $\phi$ is the integral operator  
\begin{equation} \Gamma_\phi f(x)=\int_0^\infty \phi (x+y)\, f(y)\, dy\end{equation}
which is densely defined in $L^2(0, \infty )$. (The term scattering function is not to be confused with symbol function.)\par
\indent (ii) Let $\nu\in \ell^2({\mathbb N}\cup \{0\})$. Then the Hankel matrix corresponding to $\nu$ is $[\nu (j+k)]_{j,k=0}^\infty$, which gives a densely defined operator in $\ell^2({\mathbb N}\cup\{ 0\}).$\end{defn} 
\indent Given a trace class Hankel operator $\Gamma$, the spectrum consists of $0$ and a sequence of eigenvalues $\lambda_j$, listed according to algebraic multiplicity, such that $\sum_{j=0}^\infty \vert \lambda_j\vert$ converges. Then we define the Fredholm determinant of $\Gamma $ by $\det (I+\Gamma )=\prod_{j=0}^\infty (1+\lambda_j)$. For Hilbert--Schmidt $\Gamma$, we define the Carleman determinant by $\det_2(I+\Gamma )=\prod_{j=0}^\infty ( (1+\lambda_j)e^{-\lambda_j})$.The purpose of the present paper is to compute Fredholm determinants such as $\det (I+\Gamma_\phi )$,  using operator theory and tools from linear systems.\par

\indent The function 
\begin{equation}K(x)={\frac{\gamma\sin\pi x}{\pi\sinh \gamma x}}\qquad (\gamma >0)\end{equation}
\noindent is even, integrable and of rapid decay at infinity and has Fourier transform
\begin{equation}F(\xi ) ={\frac{\sinh (\pi^2/\gamma )}{\cosh (\pi^2/\gamma )+\cosh (\pi \xi/\gamma )}}.\end{equation}
The Wiener--Hopf operator on $L^2(0, \infty )$ is $W(F): f(x)\mapsto \int_0^\infty K(x-y) f(y)\, dy$. 
The Wiener--Hopf factorization 
\begin{equation}1-F(\xi )=\psi_- (i\xi )\psi_+(i\xi )\end{equation}
\noindent was considered by Basor and Chen [2], who obtained various identities for determinants of related Hankel operators on $L^2(0, \infty )$. The following integral plays a central role in their analysis
\begin{equation}\label{Barnes}\int_{-i\infty}^{i\infty} \Bigl( {\frac{\Gamma (a+s)\Gamma (1-a+s)\Gamma (b-s)\Gamma (1-b-s)}{\Gamma (b+s)\Gamma (1-b+s)\Gamma 
(a-s)\Gamma (1-a-s)}}-1\Bigr) z^s{\frac{ds}{2\pi i}},\end{equation}
\noindent where $\Gamma$ is Euler's gamma function, and $a$ and $b$ are real. Integrals of this form were used by Mellin, Barnes and Meier [11] page 49 to develop theories of special functions.

In section four, we introduce an algebra ${\mathcal C}_2$ of complex functions on a strip containing $i{\mathbb R}$ such that each invertible $\psi\in {\mathcal C}$ has a Wiener --Hopf factorization 
$\psi (i\xi )=\psi_- (i\xi )\psi_+ (i\xi )$, and we consider the Wiener--Hopf operator $W(\psi )$ of $L^2(0, \infty )$ with symbol $\psi$. Then in section 5, we consider the functions 
\begin{equation}\phi_1(x)= \int_{-\infty }^\infty \Bigl( {\frac{\psi_-(i\xi )}{\psi_+(i\xi )}}-1\Bigr)e^{-i\xi x}\, d\xi \qquad (x>0)\end{equation}
\begin{equation}\phi_2(x)=\int_{-\infty }^\infty \Bigl( {\frac{\psi_+(-i\xi )}{\psi_-(-i\xi )}}-1\Bigr) e^{-i\xi x}\, d\xi \qquad (x>0);\end{equation}
and the Hankel integral operators $\Gamma_{\phi_1}$ and $\Gamma_{\phi_2}$. The main Theorem 5.1 gives sufficient conditions for the formula
\begin{equation}1/\det W(\psi )W(\psi^{-1})=\det (I-\Gamma_{\phi_1}\Gamma_{\phi_2}),\end{equation}
 along with sufficient conditions for the Hankel operators to be self-adjoint. \par
\indent Self-adjoint bounded Hankel operators have been characterized up to unitary equivalence by the results of [20]. The methods of [20] emphasized the importance of linear systems, and in this paper, linear systems are used to obtain expansions of the Fredholm determinant $\det (I-\Gamma_{\phi_1}\Gamma_{\phi_2})$. In section 6, we consider Wiener--Hopf factorizations which lead to Barnes's integrals as in (\ref{Barnes}), so that $\phi_1$ and $\phi_2$ have explicit expansions in terms of exponential bases. When interpreted with suitable linear systems,  these formulas give expansions of $\det (I-\Gamma_{\phi_1}\Gamma_{\phi_2})$  in terms of the generalised hypergeometric ${}_nF_m$. These results extend those of Basor and Chen [2], who obtained ${}_4F_3$ likewise. In section 7, we make specific choices of $\psi$ and interpret our results in particular examples.\par
\indent 
\begin{ex}\label{ex1} In the theory of random Hermitian matrices, the following example arises frequently. Let $w_0(x)$ be a continuous, positive and integrable weight on $(0,b)$. Then we can take $Z_b>0$ such that
\begin{equation}\label{prob}Z_b^{-1}\prod_{1\leq j<k\leq n} (x_j-x_k)^2 \prod_{j=1}^n w_0(x_j)dx_j\end{equation}
 gives a probability measure on $(0, b)^n.$ In (\ref{Hankeldet}), we identify $Z_b$ with a Hankel determinant.

For a bounded and measurable function $f:{\mathbb R}\rightarrow {\mathbb C}$, we define the linear statistic $\sum_{j=1}^n f(x_j)$ and consider the exponential moment generating function
\begin{equation}\label{linstat} {\mathbb E} e^{-\sum f}={\frac{\int_{(0,b)^n} \exp (-\sum_{j=1}^n f(x_j)) \prod_{1\leq j<k\leq n} (x_j-x_k)^2\prod_{j=1}^n w_0(x_j)dx_j}{\int_{(0,b)^n} \prod_{1\leq j<k\leq n} (x_j-x_k)^2\prod_{j=1}^n w_0(x_j)dx_j}}.\end{equation}
\indent In particular, with $f(x)=-\log (\lambda -x)$, we have $p_n(\lambda )={\mathbb E}\prod_{j=1}^n (\lambda-x_j)$, which is a monic polynomial of degree $n$. Moreover, Heine [13] showed that $(p_n(\lambda ))_{n=0}^\infty $ is the sequence of monic orthogonal polynomials with respect to the weight $w_0$. We introduce $h_j=\int p_j(x)^2 w(x)\, dx.$ Then the Hankel determinant 
\begin{equation}\label{Hankeldet}D_n[w_0]=\det\Bigl[\int_{(0,b)}x^{j+k}w_0(x)\, dx\Bigr]_{j,k=0}^{n-1}\end{equation}
satisfies
\begin{equation}D_n[w_0]=\prod_{j=0}^{n-1} h_j.\end{equation}
and $Z_b=D_n[w_0]$. In section 3, we consider how Fredholm determinants are related to finite Hankel determinants $\det [\nu (j+k)]_{j,k=0}^{n-1}$ when the weight $w_0$ is semiclassical in Magnus's sense [19]. Our results continue the analysis by Tracy and Widom [32].\par

\end{ex}

\end{section}

 \begin{section}{Linear systems and associated Hankel operators}\par
The results of this section enable us to use linear system methods to compute Fredholm determinants of Hankel operators. For a complex separable Hilbert space $H$, we let ${\mathcal L}(H)={\mathcal L}^\infty (H)$ be the space of bounded linear operators on $H$ with $\Vert T\Vert$ the usual operator norm of $T\in {\mathcal L}(H)$, and ${\mathcal L}^1(H)$ the ideal of trace class operators; then for $1\leq p<\infty$, let ${\mathcal L}^p(H)$ be the ideal of operators such that the Schatten $p$-norm $\Vert T\Vert_{{\mathcal L}^p(H)}=\bigl( {\hbox{trace}} (T^\dagger T)^{p/2}\bigr)^{1/p}$ is finite.\par

\indent The Mellin transform $f^*(s)=\int_0^\infty x^{s-1} f(x)dx$ gives a unitary transformation $f(x)\mapsto f(i\xi +1/2)/\sqrt{2\pi}$ from $L^2(0, \infty)\rightarrow L^2(i{\mathbb R})$. Let ${\mathbb C}_+=\{ z\in {\mathbb C}: \Re z>0\}$ be the right half-plane and let $H^2({\mathbb C}_+)$ be the Hardy space of holomorphic functions $f$ on ${\mathbb C}_+$ such that $\sup_{x>0}\int_{-\infty}^\infty \vert f(x+i\xi )\vert^2\, d\xi $ is finite. By the Paley--Wiener Theorem, the Mellin transform gives a unitary transformation that restricts to the orthogonal subspaces

$L^2(0, \infty) =L^2(0,1)\oplus L^2(1, \infty )$ to $L^2(i{\mathbb R})=H^2({\mathbb C}_+)\oplus H^2( {\mathbb C}_-)$.\par
\indent Let $L_j(x)=(j!)^{-1}e^x(d/dx)^j(x^je^{-x})$ be the Laguerre polynomial of order $0$ and degree $j$; then $(e^{-x/2}L_j(x))_{j=0}^\infty$ gives an orthonormal basis of $L^2(0, \infty )$. Taking the Laplace transform of the $(e^{-x/2}L_j(x))_{j=0}^\infty$  , we obtain an orthonormal basis for the space $H^2({\mathbb C}_+)$, namely 
\begin{equation}\label{rationalbasis}\Bigl( {\frac{(\lambda -1/2)^n}{\sqrt{2\pi} (\lambda +1/2)^{n+1}}}\Bigr)_0^\infty. \end{equation} With ${\mathbb N}=\{ 1, 2, \dots \}$, we introduce the standard Hilbert sequence space $\ell^2({\mathbb N}\cup \{ 0\})$, with the standard orthonormal basis $(e_n)$ and introduce the usual shift operator by the operation
 $Se_n=e_{n+1}$ on $\ell^2({\mathbb N}\cup \{ 0\})$. There is an unitary map $H^2({\mathbb C}_+)\rightarrow \ell^2({\mathbb N}\cup \{ 0\})$. 
We have unitary maps between the Hilbert spaces
\begin{equation} \begin{matrix} L^2(0, 1)&\rightarrow& H^2({\mathbb C}_+)\\
\downarrow&\nearrow&\downarrow\\
L^2(0, \infty )&\rightarrow & \ell^2({\mathbb N}\cup \{0\})\end{matrix}\end{equation}
where the top arrow is the Mellin transform, the maps down on the left is the change of variables $x=e^{-\xi}$ for $0<x<1$ and $\xi>0$, and the bottom arrow across is the expansion in terms of the Laguerre basis. The diagonal arrow is the Laplace transform, and the right downward arrow is given by expansion with respect to (\ref{rationalbasis}).

There are several equivalent expressions for the Hilbert--Schmidt norm of Hankel operators that appears here. Suppose that $\phi_1, \phi_2\in L^2(0, \infty )$, and extend them to $L^2(-\infty , \infty )$ by letting $\phi_j(u)=0$ for all $u<0$. Then by a simple Fourier transform calculation as in [5]

\begin{equation}\int_{-2\alpha}^{2\alpha} \vert u\vert \phi_1(u)\phi_2(u)\, du=\int_{-\infty}^\infty \int_{-\infty}^\infty 4\Bigl({\frac{\sin 2(x-y)}{x-y}}-{\frac{\sin^2(x-y)}{(x-y)^2}}\Bigr) \hat\phi_1\bigl({\frac{x}{\alpha}}\bigr) \hat\phi_2\bigl({\frac{y}{\alpha}}\bigr)\ {\frac{dxdy}{(2\pi)^2}}.\end{equation}
Let $C^\infty_c$ be the space of infinitely differentiable functions that have compact support.

\begin{lem} (Basor, Tracy [6]) Suppose momentarily that $f\in C^\infty_c$ is real and even, so $f(x)=f(-x)$. Then the Mellin transform $f^*$ and the Fourier cosine transform $C(f)$ of the function $f$ satisfy
\begin{equation}{\frac{1}{\pi^2}}\int_{-\infty}^\infty \vert f^*(iy)\vert^2 y\tanh (\pi y)\, dy={\frac{1}{\pi^2}}\int_0^\infty x (C(f)(x))^2\, dx.\end{equation}\end{lem}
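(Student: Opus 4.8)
The plan is to prove the identity
\[
\frac{1}{\pi^2}\int_{-\infty}^\infty |f^*(iy)|^2\, y\tanh(\pi y)\, dy=\frac{1}{\pi^2}\int_0^\infty x\,(C(f)(x))^2\, dx
\]
by interpreting both sides as the Hilbert--Schmidt norm squared (up to the constant $1/\pi^2$) of the \emph{same} Hankel-type operator, computed in two different unitarily equivalent models. First I would pass from the Fourier cosine transform to the Mellin transform: since $f$ is real and even, $f(x)=f(-x)$, so its Fourier transform is, up to normalisation, its cosine transform $C(f)$, and the substitution $x=e^{-\xi}$ together with the Paley--Wiener / Mellin unitary from the excerpt relates $f^*(s)$ on the line $\Re s=1/2$, i.e.\ $f^*(iy+1/2)/\sqrt{2\pi}$, to boundary values of the Hardy-space decomposition. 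The key mechanism is the classical formula (essentially the Mellin convolution theorem, or equivalently the formula for the matrix entries of a Hankel operator in the Laguerre basis) expressing
\[
\int_0^\infty \phi(x+y)\,e^{-x/2}L_j(x)\,e^{-y/2}L_k(y)\,dx\,dy
\]
as a Mellin-type integral involving $\phi^*$ and a beta-function factor, whose modulus-squared summed over $j,k$ collapses via the Legendre duplication formula to the weight $y\tanh(\pi y)$.

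The main steps, in order, would be: (1) record that for $f\in C^\infty_c$ even, both integrals are finite (rapid decay of $f^*$ on vertical lines by repeated integration by parts, and $C(f)\in C^\infty_c$-type decay), so all manipulations are justified by Fubini and Plancherel; (2) compute $\sum_{j,k\ge 0}\bigl|\langle \Gamma_\phi\, e^{-\cdot/2}L_j,\ e^{-\cdot/2}L_k\rangle\bigr|^2$ for a suitable $\phi$ associated to $f$, getting on one hand $\|\Gamma_\phi\|_{{\mathcal L}^2}^2=\int_0^\infty\int_0^\infty |\phi(x+y)|^2\,dx\,dy=\int_0^\infty t\,|\phi(t)|^2\,dt$ directly from Definition~1.1(i), and on the other hand, after transporting the Hankel operator through the unitary diagram $L^2(0,\infty)\to H^2({\mathbb C}_+)\to \ell^2$, an expression of the form $\frac{1}{2\pi}\int_{-\infty}^\infty |\phi^*(1/2+iy)|^2\, \mu(y)\,dy$ for an explicit spectral weight $\mu$; (3) identify the weight: the Hankel operator on $H^2({\mathbb C}_+)$ in the Mellin picture acts by multiplication-then-projection, and the relevant kernel $\Gamma(1/2+iy)\Gamma(1/2-iy)/\Gamma(\cdots)$ type product simplifies, using $|\Gamma(1/2+iy)|^2=\pi/\cosh\pi y$ and the duplication formula, to produce exactly $y\tanh(\pi y)$; (4) finally relate $\phi$ back to $f$ so that $\int_0^\infty t|\phi(t)|^2\,dt$ becomes $\int_0^\infty x\,(C(f)(x))^2\,dx$ and $\phi^*$ becomes $f^*$, and collect the constants to land on $1/\pi^2$ on both sides.

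The hard part will be step (3): getting the exact spectral weight $y\tanh(\pi y)$ with the correct constant, rather than some proportional quantity, requires careful bookkeeping of the three normalisations in play --- the $\sqrt{2\pi}$ in the Mellin unitary, the $1/\sqrt{2\pi}$ in the rational Laguerre basis (\ref{rationalbasis}), and the Fourier-cosine normalisation hidden in $C(f)$ --- together with a clean evaluation of the gamma-function product that arises when one writes the Hankel bilinear form $\int\int \phi(x+y)\,x^{s-1}y^{w-1}\,dx\,dy$ via the beta integral $B(s,w)=\Gamma(s)\Gamma(w)/\Gamma(s+w)$ and then sets $s=1/2+iy$, $w=1/2-iy$. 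I expect the identity $|\Gamma(1/2+iy)|^2=\pi\,\mathrm{sech}\,\pi y$ and $|\Gamma(iy)|^2=\pi/(y\sinh\pi y)$, combined with $\tanh\pi y = \sinh\pi y/\cosh\pi y$, to be precisely what converts the gamma quotient into $y\tanh\pi y$; once that algebraic identity is pinned down the rest is Plancherel on the line $\Re s=1/2$ and on $(0,\infty)$. An alternative, perhaps cleaner, route for the writeup is to avoid Hankel operators entirely and argue directly: write $C(f)(x)=\sqrt{2/\pi}\int_0^\infty f(t)\cos(xt)\,dt$, expand $f(t)$ by Mellin inversion $f(t)=\frac{1}{2\pi}\int_{-\infty}^\infty f^*(1/2+iy)t^{-1/2-iy}\,dy$, interchange integrals, use $\int_0^\infty t^{-1/2-iy}\cos(xt)\,dt = x^{iy-1/2}\Gamma(1/2-iy)\cos(\pi(1/4-iy/2))\cdot(\text{const})$, and then apply Plancherel in the $x$-variable to $x^{1/2}C(f)(x)$, which reproduces the claimed identity with $y\tanh\pi y$ emerging from $|\Gamma(1/2-iy)|^2$ times the squared cosine factor; I would present whichever of these two is shorter, flagging the operator-theoretic interpretation since it is the one the rest of the paper exploits.
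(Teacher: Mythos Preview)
Your two strategies are both plausible and would eventually succeed, but they are genuinely different from, and considerably more involved than, the paper's proof. The paper does not pass through Hankel operators at all; instead it introduces the Weyl fractional derivative
\[
D^{1/2}f(x)=\frac{1}{\sqrt{\pi}}\,\frac{d}{dx}\int_x^\infty \frac{f(u)}{\sqrt{u-x}}\,du,
\]
computes its Mellin transform directly as the multiplier $(D^{1/2}f)^*(s)=-\dfrac{\Gamma(s)}{\Gamma(s-1/2)}\,f^*(s-1/2)$, and then applies Plancherel for the Mellin transform on the critical line. The gamma quotient $\dfrac{\Gamma(s)\Gamma(1-s)}{\Gamma(s-1/2)\Gamma(1/2-s)}$ at $s=\tfrac12+iy$ collapses, via $|\Gamma(\tfrac12+iy)|^2=\pi\,\mathrm{sech}\,\pi y$ and $|\Gamma(iy)|^2=\pi/(y\sinh\pi y)$, to exactly $y\tanh\pi y$. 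The cosine-transform side is then immediate from the standard fact that $D^{1/2}$ acts on the Fourier cosine side as multiplication by $\sqrt{x}$, so $\int_0^\infty (D^{1/2}f)^2\,dx$ is proportional to $\int_0^\infty x\,(C(f)(x))^2\,dx$.

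What the paper's approach buys is that a \emph{single} auxiliary object, $D^{1/2}f$, simultaneously produces the correct spectral weight on the Mellin side (via the gamma multiplier) and the factor $x$ on the cosine side (via the half-derivative symbol), so the bookkeeping of constants you rightly flag as the hard part of your step~(3) is replaced by one clean Mellin-multiplier identity. Your Hankel-operator route essentially reverses the logical order of the paper: the commuting diagram of isometries in Proposition~\ref{commutingdiagram} is \emph{deduced} from this lemma, not used to prove it, so your first strategy risks circularity if you rely on that diagram. Your alternative route (Mellin inversion of $f$, then the formula $\int_0^\infty t^{s-1}\cos(xt)\,dt=\Gamma(s)\cos(\pi s/2)\,x^{-s}$, then Plancherel in $x$) is sound and independent, but you will find that the combination $|\Gamma(\tfrac12-iy)|^2\,|\cos(\pi(\tfrac14-\tfrac{iy}{2}))|^2$ requires the same gamma identities as above, plus an extra half-angle computation; the fractional-derivative packaging avoids that detour.
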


\begin{proof} The fractional derivative 
\begin{equation}D^{1/2} f(x)={\frac{1}{\sqrt{\pi}}} {\frac{d}{dx}}\int_x^\infty {\frac{f(u)}{\sqrt{u-x}}} du\end{equation}
\noindent has Mellin transform
\begin{equation}( D^{1/2}f)^*(s)=-{\frac{\Gamma (s)}{ \Gamma (s-1/2)}}f^*(s-1/2),\end{equation}
\noindent where $f^*(s)$ is the usual Mellin transform of $f$. Hence by the Plancherel formula for the Mellin transform 
\begin{align} \int_0^\infty \bigl( D^{1/2} f(x)\bigr)^2 dx&= {\frac{1}{2\pi i}}\int_{-i\infty}^{i\infty}  (D^{1/2}f)^*(s)( D^{1/2}f)^*(1-s)ds\nonumber\\
&=  {\frac{1}{2\pi i}}\int_{-i\infty}^{i\infty}  {\frac{\Gamma (s)\Gamma (1-s)}{\Gamma (s-1/2) \Gamma (1/2-s)}} f^*(s-1/2)f^*(1/2-s) ds\nonumber\\
&= {\frac{1}{2\pi}}\int_{-\infty}^\infty  f^*(iy)f^*(-iy)y\tanh (\pi y)\, dy.\end{align} 
\end{proof}

\begin{prop}\label{commutingdiagram}
The following is a commuting diagram of linear isometries, in which the top arrow is the Fourier cosine transform, and the left downwards arrow is the Mellin transform.
\begin{equation} \begin{matrix}\{ f: D^{1/2}f\in L^2(0, \infty )\}& \rightarrow &\{ \phi \in L^2((0, \infty ); xdx)\}\\
\downarrow &{}& \downarrow\\
\{ f^*\in L^2(i{\mathbb R}; y\tanh (\pi y)dy)\}&\rightarrow &\{ \Gamma_\phi\in {\mathcal L}^2\}\\
 \end{matrix} \end{equation}
\end{prop}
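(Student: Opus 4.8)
The plan is to verify that the three named arrows are norm-preserving and then to obtain the fourth arrow, together with commutativity, essentially for free. In the square the top arrow is the Fourier cosine transform $f\mapsto C(f)$, the left arrow is the Mellin transform $f\mapsto f^*$, and the only sensible candidate for the right arrow is $\phi\mapsto\Gamma_\phi$ of Definition 1.1. The bottom arrow will be \emph{defined} to be the composite $f^*\mapsto\Gamma_{C(f)}$, so that the diagram commutes by construction; it then remains only to check that each of the four maps preserves the indicated norm, where the normalising constants in the Fourier cosine and Mellin transforms are fixed so that the identities below are genuine equalities of (squared) norms rather than merely equivalences.

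I would begin with the right-hand arrow, which is elementary. The operator $\Gamma_\phi$ has integral kernel $\phi(x+y)$ on $(0,\infty)^2$, so the change of variable $u=x+y$ gives
\[
\|\Gamma_\phi\|_{{\mathcal L}^2}^2=\int_0^\infty\!\!\int_0^\infty|\phi(x+y)|^2\,dx\,dy=\int_0^\infty|\phi(u)|^2\Bigl(\int_0^u dx\Bigr)\,du=\int_0^\infty u\,|\phi(u)|^2\,du,
\]
which is precisely the square of the norm of $\phi$ in $L^2((0,\infty);x\,dx)$. Since $\Gamma_\phi$ determines $\phi$ almost everywhere and the displayed identity shows $\Gamma_\phi$ is Hilbert--Schmidt exactly when $\phi\in L^2((0,\infty);x\,dx)$, the map $\phi\mapsto\Gamma_\phi$ is a bijective linear isometry of $L^2((0,\infty);x\,dx)$ onto $\{\Gamma_\phi\in{\mathcal L}^2\}$.

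Next I would treat the left column using the computation already carried out in the proof of Lemma 2.1. There the Plancherel formula for the Mellin transform, combined with the identity $|\Gamma(\tfrac12+iy)/\Gamma(iy)|^2=y\tanh(\pi y)$ (a consequence of the reflection and recursion formulas for $\Gamma$), yields
\[
\int_0^\infty\bigl(D^{1/2}f(x)\bigr)^2\,dx=\frac1{2\pi}\int_{-\infty}^\infty|f^*(iy)|^2\,y\tanh(\pi y)\,dy,
\]
so $D^{1/2}f\in L^2(0,\infty)$ holds if and only if $f^*\in L^2(i{\mathbb R};y\tanh(\pi y)\,dy)$, and with the norm on the upper-left space taken to be the right-hand side the Mellin transform is an isometry of the left column. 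For the top arrow, Lemma 2.1 itself states that $\int_{-\infty}^\infty|f^*(iy)|^2\,y\tanh(\pi y)\,dy=\int_0^\infty x\,(C(f)(x))^2\,dx$ for real even $f\in C^\infty_c$; as such $f$ are dense and the maps in question are continuous, this extends to give that $C$ is an isometry of $\{f:D^{1/2}f\in L^2\}$ onto $L^2((0,\infty);x\,dx)$, surjectivity following because the image is closed and contains the dense set $C(C^\infty_c)$. Composing the three isometries shows the bottom arrow $f^*\mapsto\Gamma_{C(f)}$ is an isometry, and chasing an element $f$ around the square — right-then-down gives $\Gamma_{C(f)}$, down-then-right gives $\Gamma_{C(f)}$ by the definition of the bottom arrow — gives commutativity.

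The only delicate points, and thus the main obstacle, are the density and closure arguments in the top and left columns: one must check that the real even functions in $C^\infty_c$ are dense in the completion $\{f:D^{1/2}f\in L^2\}$ for the norm above, that $C$ and the Mellin transform extend continuously and surjectively to the stated spaces, and — for surjectivity of the Mellin arrow — that every $g\in L^2(i{\mathbb R};y\tanh(\pi y)\,dy)$ arises as $f^*$ for an admissible $f$, reconciling the contour shift from $\Re s=0$ to $\Re s=\tfrac12$ in the Mellin--Plancherel identity with the domain of the fractional derivative $D^{1/2}$. These are routine but require some care; once they are in place, the proposition follows by assembling the three isometries of the preceding two paragraphs and reading commutativity off the definition of the bottom arrow.
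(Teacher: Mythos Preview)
Your proposal is correct and is essentially the argument the paper has in mind: the paper gives no separate proof of Proposition~\ref{commutingdiagram} at all, treating it as an immediate consequence of Lemma~2.1 (whose proof supplies the identity $\int_0^\infty (D^{1/2}f)^2\,dx=\frac{1}{2\pi}\int|f^*(iy)|^2\,y\tanh(\pi y)\,dy$ via the Mellin transform of the fractional derivative) together with the standard Hilbert--Schmidt norm computation $\Vert\Gamma_\phi\Vert_{{\mathcal L}^2}^2=\int_0^\infty x|\phi(x)|^2\,dx$. Your write-up is in fact more careful than the paper's, since you flag the density and surjectivity issues that the paper simply passes over.
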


\indent We show that trace class Hankel operators on Hardy space $H^2({\mathbb C}_+)$ have a matrix representation with respect to reproducing kernels on the state space.
 Let ${\mathbb C}_+=\{ z\in {\mathbb C}: \Re z>0\}$ and  ${\mathbb C}_-=\{ z\in {\mathbb C}: \Re z<0\}$; then we introduce the usual Hardy spaces $H^2({\mathbb C}_+)$ and  $H^2({\mathbb C}_-)$ which are related by the unitary involution $J: H^2({\mathbb C}_+)\rightarrow H^2({\mathbb C}_-):$ $f(s)\mapsto f(-s)$. We regard $H^2({\mathbb C}_+)$ as a closed linear subspace of $L^2(i{\mathbb R}),$ and let $P_+: L^2(i{\mathbb  R})\rightarrow H^2({\mathbb C}_+)$ be the orthogonal projection. For $h\in L^\infty (i{\mathbb  R})$, let $M_h:L^2(i{\mathbb  R})\rightarrow L^2(i{\mathbb  R})$ be the multiplication operator $f\mapsto hf$.  The Laplace transform gives a unitary isometry ${\mathcal L}:L^2(0, \infty )\rightarrow H^2({\mathbb C}_+)$.\par
 \indent  Given $c\in L^\infty$, suppose that $\Gamma_c=P_+M_cJ$ is a bounded Hankel operator. Then by Nehari's and Fefferman's theorems [23], there exists $\psi\in L^\infty (i{\mathbb R})$ such that
\begin{equation}c(s)-c(\tau )=\int_{-\infty}^\infty \psi (i\omega )\Bigl( {\frac{1}{s-i\omega }}- {\frac{1}{\tau-i\omega }}\Bigr) {\frac{d\omega}{2\pi}}\qquad (s,\tau\in {\mathbb C}_+).\end{equation}
\noindent Note that $\psi$ determines $c$ up to an additive constant; adding a constant $\alpha$ to $c$ does not change $\psi$ of $\Gamma_c$. See [23]\par
\indent Let $H=H^2({\mathbb C}_+)$ be the state space and let ${\mathcal D}(A)=\{ g(s)\in H: s g(s )\in H\}$ with the graph norm.  Then we introduce the linear system $(-A,B,C)$  by 
\begin{align}\label{linsys3} A:{\mathcal D}(A)\rightarrow H: &\quad  g(s )\mapsto s g(s)\qquad (g\in {\mathcal D}(A));\nonumber\\
B: {\mathbb C}\rightarrow {\mathcal D}(A)^*:&\quad \beta\mapsto \beta \qquad (\beta\in {\mathbb C});\nonumber\\
C: {\mathcal D}(A)\rightarrow {\mathbb C}: &\quad g\mapsto {\frac{1}{2\pi}}\int_{-\infty }^\infty g(i\omega )\overline {c(i\omega )}\,d\omega \qquad (g\in {\mathcal D}(A)).\end{align}
\noindent The semigroup $(e^{-tA})_{t>0}$ operates by multiplication on the state space and is strongly continuous, so $e^{-tA}f(s)=e^{-st}f(s).$ 
Let $k_\zeta\in H^2({\mathbb C}_+)$ be the function $k_\zeta (s)=1/(s+\bar \zeta)$, so that $\langle f, k_\zeta \rangle =f(\zeta )$ for all $f\in H^2({\mathbb C}_+)$ and $\zeta\in {\mathbb C}_+$; one calls $k_\zeta (s)$ the reproducing kernel of $H^2({\mathbb C}_+)$. The various conjugates are introduced so that we can work with analytic, as opposed to anti-analytic, functions.\par
\vskip.05in
\begin{lem}Let $\Re\zeta_j>0$ and $c_j\in {\mathbb C}$ be such $\sum_{j=1}^\infty \vert c_j\vert \vert 1+\zeta_j\vert^2 /\Re \zeta_j$ is convergent.\par
\indent (i) Then the series $c(s)=\sum_{j=1}^\infty c_jk_{\zeta_j}(s)$ converges in $H^2({\mathbb C}_+)$ and $H^\infty$;\par
\indent (ii) the operators $\Gamma_c$ and $R_x=\int_x^\infty e^{-tA}BCe^{-tA}\, dt$ for $x>0$ are trace class on $H^2({\mathbb C}_+)$;\par
\indent (iii) $\Gamma_c^\dagger$ is unitarily equivalent to the Hankel integral operator $\Gamma_\phi$ on $L^2(0, \infty )$ with $\phi (t)=Ce^{-tA}B.$\end{lem}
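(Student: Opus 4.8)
The plan is to verify the three claims in sequence, since (ii) and (iii) both rest on the absolute convergence hypothesis established in (i). First I would handle (i) by estimating the reproducing kernels directly. For $\Re\zeta>0$ one has $\Vert k_\zeta\Vert_{H^2(\mathbb{C}_+)}^2 = 1/(2\Re\zeta)$, and the sup-norm bound $\Vert k_\zeta\Vert_{H^\infty}\le 1/\Re\zeta$ follows since $|s+\bar\zeta|\ge\Re(s+\bar\zeta)\ge\Re\zeta$ on $\mathbb{C}_+$; summing these against $|c_j|$ and comparing with the hypothesis $\sum_j |c_j|\,|1+\zeta_j|^2/\Re\zeta_j<\infty$ gives norm-convergence in both spaces (the factor $|1+\zeta_j|^2$ is more than enough to dominate $(\Re\zeta_j)^{-1/2}$ and $(\Re\zeta_j)^{-1}$ once one notes $\Re\zeta_j\le|1+\zeta_j|$... more carefully, $1/\Re\zeta_j \le |1+\zeta_j|^2/\Re\zeta_j$, so the $H^\infty$ sum is immediate, and the $H^2$ sum is controlled by Cauchy--Schwarz or simply by $\Vert k_{\zeta_j}\Vert_{H^2}\le \Vert k_{\zeta_j}\Vert_{H^\infty}^{1/2}\cdot(\text{const})$ after using that the $c_j$-sum with weight $|1+\zeta_j|^2/\Re\zeta_j$ already converges).

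For (ii), the key observation is that $\Gamma_c$ applied to the reproducing-kernel basis has a manageable form: a Hankel operator with symbol built from a single $k_\zeta$ is rank one (up to the standard computation $\Gamma_{k_\zeta} = k_\zeta\otimes(\,\cdot\,)$ against an explicit vector), with trace norm controlled by $\Vert k_\zeta\Vert_{H^2}\cdot\Vert(\text{paired vector})\Vert_{H^2}\lesssim 1/\Re\zeta$ — and then, using the relation between $c$ and $\psi$ displayed before the lemma together with linearity, $\Gamma_c=\sum_j c_j\Gamma_{k_{\zeta_j}}$ converges in $\mathcal{L}^1$ because $\sum_j|c_j|/\Re\zeta_j\le\sum_j|c_j|\,|1+\zeta_j|^2/\Re\zeta_j<\infty$. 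For the operator $R_x=\int_x^\infty e^{-tA}BC e^{-tA}\,dt$, I would note that $e^{-tA}$ is multiplication by $e^{-ts}$, $B$ is the inclusion of constants and $C$ is integration against $\overline{c(i\omega)}$, so $e^{-tA}B$ is (a multiple of) the function $s\mapsto e^{-ts}$, which is in $\mathcal{D}(A)$ with $H^2$-norm $\sim t^{-1/2}$, while $Ce^{-tA}$ pairs against $e^{-ts}$; hence the integrand is rank one with trace norm $\lesssim e^{-2\Re(\zeta_j)t}$-type decay after expanding $c$, and the $t$-integral from $x>0$ converges, giving $R_x\in\mathcal{L}^1$. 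This is essentially the statement that the observability/controllability Gramian truncated to $[x,\infty)$ is trace class, which is standard once the kernel estimates of (i) are in hand.

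For (iii), the plan is to transport everything to $L^2(0,\infty)$ via the Laplace transform $\mathcal{L}$. Under $\mathcal{L}$, the state-space semigroup $e^{-tA}$ (multiplication by $e^{-ts}$) becomes translation/truncation on $L^2(0,\infty)$, $B$ becomes evaluation data, and the scattering function is recovered as $\phi(t)=Ce^{-tA}B$; this is the defining formula of the system, so one checks directly that the Hankel operator with that scattering function, conjugated by $\mathcal{L}$, equals $\Gamma_c^\dagger$. Concretely, $\langle \Gamma_\phi f,g\rangle = \int_0^\infty\int_0^\infty \phi(t+s)f(s)g(t)\,ds\,dt = \int_0^\infty\int_0^\infty Ce^{-(t+s)A}B\,f(s)g(t)\,ds\,dt$, and recognizing the separated-variable structure $Ce^{-tA}(e^{-sA}B)$ lets one identify the double integral with $\langle (\text{Laplace of }\Gamma_c^\dagger\text{-action})\cdot,\cdot\rangle$ by the reproducing-kernel identity $\langle f,k_\zeta\rangle=f(\zeta)$ together with the integral representation of $c-c(\tau)$ in terms of $\psi$. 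The adjoint appears because $C$ involves $\overline{c(i\omega)}$, i.e. the conjugate-linear pairing is built into the linear system exactly so that $\Gamma_\phi$ matches $\Gamma_c^\dagger$ rather than $\Gamma_c$.

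I expect the main obstacle to be (iii): pinning down the unitary equivalence requires carefully tracking the several conjugations (the involution $J$, the conjugate $\overline{c(i\omega)}$ in the definition of $C$, and the analytic-versus-anti-analytic convention flagged in the text) and confirming that the Fourier/Laplace normalizations make the scattering function $\phi(t)=Ce^{-tA}B$ come out exactly right, with no stray factor of $2\pi$ or sign. The convergence issues in (i) and (ii) are routine kernel estimates; the genuine content is the clean system-theoretic identity $\Gamma_c^\dagger \cong \Gamma_{Ce^{-\bullet A}B}$, which is the bridge the rest of the paper will use.
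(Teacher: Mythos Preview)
Your proposal is correct and follows essentially the same route as the paper. The paper streamlines the argument by first computing $\phi(t)=Ce^{-tA}B=\sum_j \bar c_j e^{-t\zeta_j}$ explicitly (which makes the rank-one decomposition $\phi(t+u)=\sum_j \bar c_j e^{-\zeta_j t}e^{-\zeta_j u}$ and hence the trace-class bound $\Vert\Gamma_\phi\Vert_{\mathcal L^1}\le\sum_j|c_j|/(2\Re\zeta_j)$ immediate), and then verifies the unitary equivalence in (iii) simply by testing on exponentials $f(x)=e^{-\xi x}$, $g(x)=e^{-\zeta x}$ rather than tracking the abstract semigroup calculus.
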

\vskip.05in
\begin{proof}(i) The series $c(s)=\sum_{j=1}^\infty c_jk_{\zeta_j}(s)$ converges in $H^2({\mathbb C}_+)$ since $\sum_{j=1}^\infty\vert c_j\vert /\sqrt{\Re\zeta_j}$ converges. 
Also, $\Vert c\Vert_{L^\infty}\leq \sum_{j=1}^\infty \vert c_j\vert /\Re \zeta_j<\infty$, so $c\in H^\infty ({\mathbb C}_+)$; hence we can choose $\psi (i\omega) =c(i\omega )$ in the above, and deduce that $(c(z)-c(\alpha ))/(\alpha -z)$ belongs to $H^2({\mathbb C}_+)$ with norm $m/\sqrt{\Re \alpha }$. Hence by Lemma 2.2 of [23],  $Ce^{-tA}g\in L^2(0, \infty )$ for all $g\in {\mathcal D}(A)$ with 
\begin{equation}\int_0^\infty \vert Ce^{-tA}g\vert^2dt\leq {\frac{m\Vert g\Vert^2_H}{\Re \alpha }}\qquad (\Re \alpha >0, g\in {\mathcal D}(A)).\end{equation} 
\indent  (iii) One can easily check that $e^{-tA^*}k_\zeta (s)=e^{-t\bar\zeta }k_\zeta (s)$, hence 
\begin{equation}e^{-tA^*} c(s)=\sum_{j=1}^\infty c_je^{-t\bar\zeta_j} k_{\zeta_j}(s).\end{equation}
\noindent We introduce 
\begin{equation}\label{rankone}\phi (t)=Ce^{-tA}B=\langle 1, e^{-tA^*}c\rangle =\sum_{j=1}^\infty \bar c_j e^{-t\zeta_j}.\end{equation}

\noindent From the expansion of $\phi (t+u)$ as a series of rank one kernels $e^{-\zeta_j(u+t)}$, we deduce that $\Gamma_\phi$ is trace class with $\Vert\Gamma_\phi\Vert_{{\mathcal L}^1}\leq\sum_j\vert c_j\vert/(2\Re \zeta_j).$ One then checks that 
\begin{equation}\langle \Gamma_\phi^\dagger f,g\rangle_{L^2(0, \infty )} =\langle \Gamma_c {\mathcal L}f, {\mathcal L}g\rangle_{H^2};\end{equation}
\noindent the simplest way to do this is by selecting $f(x)=e^{-\xi x}$ and $g(x)=e^{-\zeta x}$, so that 
\begin{equation}\langle \Gamma_\phi^\dagger f,g\rangle_{L^2(0, \infty )}=\sum_{j=1}^\infty {\frac{c_j}{(\bar\zeta_j +\xi )(\bar\xi +\bar\zeta_j)}}=\Bigl\langle {\frac{c(s)-c(\xi )}{\xi -s}}, {\frac{1}{\zeta +s}}\Bigr\rangle_{H^2}.\end{equation}
\noindent Also, we deduce that
\begin{equation}\psi (i\xi )=\int_{0}^\infty e^{-i\xi t} \bar\phi (t)\, dt,\end{equation} 
\begin{equation}\label{transform}\bar\phi (t)=\int_{-\infty}^\infty \psi (i\xi )e^{i\xi t}{\frac{d\xi}{2\pi}}\qquad (t>0).\end{equation}
\indent (ii) Hence we can write
\begin{align}R_xf(z)&=\int_x^\infty e^{-tA}BCe^{-tA}f(z)\, dt\nonumber\\
&=\sum_{j=1}^\infty {\frac{\bar c_j e^{-x(z+\zeta_j)}f(\zeta_j)}{z+\zeta_j}}\nonumber\\
&=\sum_{j=1}^\infty \bar c_j e^{-xA}k_{\bar \zeta_j}(z)\langle f, e^{-xA^*}k_{\zeta_j}\rangle\end{align}
\noindent so $R_x\in {\mathcal L}^1(H)$. Hence $\Gamma_\phi$ and $\Gamma_c$ are trace class operators. \par
\indent Alternatively, one can introduce the sequence of $\lambda_j=(1-\zeta_j)/(1+\zeta_j)$ which satisfies $\vert \lambda_j\vert<1$ and $\sum_{j=1}^\infty\vert c_j\vert/(1-\vert \lambda_j\vert)<\infty$. Then one can show that $\Gamma_c$ is unitarily equivalent to a trace-class Hankel operator on the Hardy space $H^2$ of the unit disc, by Peller's criterion [24, page 232]. Incidently, Peller's criterion is sharp.\par
\end{proof}
\indent Any bounded Hankel integral operator generates a sequence of moments, in the following sense. For $\phi\in L^2(0, \infty )$, let $\Gamma_\phi$ be the Hankel integral operator and introduce the moment sequence
\begin{align}\mu_n&=\int_0^\infty \phi (x) L_n(x) e^{-x/2}\, dx\nonumber\\
&={\frac{1}{2\pi}}\int_{-\infty}^\infty \hat \phi (\xi ){\frac{(i\xi -1/2)^n}{(i\xi +1/2)^{n+1}}} d\xi\nonumber\\
&={\frac{1}{2\pi i}}\int_{\vert z\vert =1} \hat\phi \Bigl( {\frac{1+z}{2i (1-z)}}\Bigr)z^n{\frac{dz}{1-z}}\qquad (n=0, 1, \dots ).\end{align} 
Magnus has characterized the moment sequences that arise as $(\mu_n =\int_S x^n w(x)\, dx)$ for a semi classical weight on some subset of ${\mathbb C}\cup\{ \infty \}$, as we discuss in the next section. \par

\end{section}

\begin{section} {\bf From orthogonal polynomials to Hankel determinants}\par
\vskip.05in
\noindent Let $(p_n(x))_{n=0}^\infty $ be the sequence of monic orthogonal polynomials of degree $n$ for some continuous and positive weight $w_0(x)$ on $(0,b)$, given by the recurrence relation
\begin{equation}\label{polyrecurrence}xp_n(x)=p_{n+1}(x)+\alpha_n p_n(x)+\beta_n p_{n-1}(x).\end{equation}
Let $\int p_n(x)^2w_0(x)dx=h_n$; then $\beta_n-=h_n/h_{n-1}>0$. 
Let $Q_n$ be the orthogonal projection of $L^2(0,b)$ onto 
$${\hbox{span}}\{ \sqrt{w_0(x)} p_j(x); j=0, \dots, n-1\}.$$
Then the Christoffel--Darboux formula gives
\begin{equation}Q_n(x,y)={\frac{1}{h_{n-1}}}\sqrt{w_0(x)w_0(y)}{\frac{p_n(x)p_{n-1}(y)-p_n(y)p_{n-1}(x)}{x-y}}\end{equation}
so that $Q_n$ is an integrable operator. We show also that for suitable weights, $Q_n$ is a sum of products of Hankel operators.\par

\begin{defn}(Magnus, [19]) (i) Let $F(z)=\int (z-x)^{-1}w_0(x)\, dx$ be the Cauchy transform of the weight $w_0$ on $E=(0,b)$. The weight is said to be semi-classical if there exist polynomials $U,V,W$ with $W\neq 0$ such that
\begin{equation}W(z)F'(z)=2V(z)F(z)+U(z)\qquad (z\in {\mathbb C}\setminus {\mathbb R}).\end{equation}
Equivalently, the moments $\mu_k=\int x^kw_0(x)dx$ satisfy a recurrence relation
\begin{equation}\sum_{j=0}^m (\nu\xi_j+\eta_j)\mu_{j+\nu}=0\qquad (\nu=0,1 \dots ).\end{equation}
for some $\xi_j, \eta_k\in {\mathbb C}$ given by the coefficients of $V,W$, where $m$ is the maximum of the degrees of $V$ and $W$.\par
(ii) A pair of polynomials $(2V,W)$ is said to be generic if $W$ has degree $m$ where $m\geq 2$, the degree of $V$ is less than $m$, $W$ has $m$ simple zeros $\alpha_j$ and $2V/W$  has all  residues $2V(\alpha_j)/W'(\alpha_j)$ that are not integers. \par
(iii) Let $\vartheta (x)=0$ for $x<0$ and $\vartheta (x)=1$ for $x\geq 0$. 
\end{defn} 

\begin{thm}\label{Hankelfactors}{Let $w_0$ be a positive and continuous semiclassical weight on $[0,\infty )$ that corresponds to a generic pair $(2V,W)$.\par
(i) Then there exist $\phi_j, \psi_j\in L^2(0, \infty )$ such that
\begin{equation} Q_n(x,y)=\sum_{j=1}^N \int_0^\infty \phi_j(x+t)\psi_j(t+y)\, dt.\end{equation}
(ii) There exist scattering functions $\Phi, \Psi\in L^2((0, \infty ); {\mathbb C}^N)$ such that, for all $f\in L^\infty ({\mathbb R})$ as in (\ref{linstat}), 
\begin{equation} {\mathbb E} e^{-\sum f}=\det (I-M_{1-e^{-f}}\Gamma^T_\Psi \Gamma_\Phi ),
\end{equation}
where ${}^T$ denotes transpose.\par
(iii) For $f(x)=\beta\vartheta (x-t)$ with $\Re \beta>0$ and $\lambda={1-e^{-\beta}}$, the moment generating function of the random variable $\sharp\{j: x_j>t\}$ subject to the  probability (\ref{prob}) is given by 
\begin{equation} {\mathbb E} e^{-\sum \vartheta (.-t)}=\det (I-\lambda\Gamma^T_{\Psi_t} \Gamma_{\Phi_t} )
\end{equation}
where the scattering functions are shifted to $\Phi_t(x)=\Phi (x+t)$ and $\Psi_t(x)=\Psi (x+t)$.
}\end{thm}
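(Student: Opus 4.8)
The plan is to treat (i) as the structural heart of the theorem and to obtain (ii) and (iii) from it by standard manipulations with determinantal point processes. Throughout write $f_j(x)=h_{n-1}^{-1/2}\sqrt{w_0(x)}\,p_j(x)$, so that the Christoffel--Darboux formula displayed above reads $Q_n(x,y)=\bigl(f_n(x)f_{n-1}(y)-f_{n-1}(x)f_n(y)\bigr)/(x-y)$. For (i) the point is \emph{not} finite rank --- $Q_n=\sum_{j<n}|f_j\rangle\langle f_j|$ is trivially finite rank --- but that the number $N$ of Hankel factors can be kept bounded in terms of $m=\deg W$ alone, uniformly in $n$, and this is exactly where genericity enters. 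Since $\deg V<\deg W=m$, the logarithmic derivative $w_0'/w_0=2V/W$ is a proper rational function with simple poles at the zeros $\alpha_1,\dots,\alpha_m$ of $W$ and non-integer residues $r_k=2V(\alpha_k)/W'(\alpha_k)$, so $w_0(x)=C\prod_{k=1}^m(x-\alpha_k)^{r_k}$; by Magnus's ladder-operator relations [19] the pair $(f_n,f_{n-1})$ then solves a first-order linear system whose coefficient matrix is rational with only the $\alpha_k$ as finite poles (residues depending on $n$, pole set fixed), equivalently $f_n=\sqrt{w_0}\,p_n$ has a Mellin--Barnes / Laplace-transform representation as a finite combination of ratios of Gamma functions --- a generalised hypergeometric structure --- in which the number of Gamma factors is governed by $m$. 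Feeding these representations into the Christoffel--Darboux numerator and using the elementary identity $\int_0^s e^{-ux}e^{-(s-u)y}\,du=(e^{-sy}-e^{-sx})/(x-y)$, together with the Beta-integral identities for the Gamma ratios, to resolve $1/(x-y)$ into an $(x+t)$, $(t+y)$ convolution, one collapses $Q_n(x,y)$ into a sum of at most $N=N(m)$ terms $\int_0^\infty\phi_j(x+t)\psi_j(t+y)\,dt$ with $\phi_j,\psi_j\in L^2(0,\infty)$; the decay $\int_0^\infty x(|\phi_j|^2+|\psi_j|^2)\,dx<\infty$ that makes $\Gamma_{\phi_j},\Gamma_{\psi_j}$ Hilbert--Schmidt follows from Proposition \ref{commutingdiagram} and the known asymptotics of the hypergeometric pieces. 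The main obstacle is precisely the uniform bound on $N$: one must show that combining the rational ladder coefficients with the Gamma-function building blocks never makes the count grow with $n$, and it is here that the genericity hypotheses --- simplicity of the zeros of $W$ and non-integrality of the residues --- are used, to keep the Mellin--Barnes contours and their residue expansions finite and in good position.

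For (ii), the probability measure in \eqref{prob} makes $\{x_1,\dots,x_n\}$ a determinantal point process on $(0,\infty)$ with correlation kernel $Q_n$ (Heine [13], Andréief's identity; see Tracy--Widom [32]), so for bounded measurable $h$ one has $\mathbb{E}\prod_{j=1}^n(1+h(x_j))=\det(I+Q_nM_h)$ on $L^2(0,\infty)$. Taking $h=e^{-f}-1$ gives $\mathbb{E}e^{-\sum f}=\det(I-Q_nM_{1-e^{-f}})$, and since $Q_n$ is finite rank, $\det(I-AB)=\det(I-BA)$ turns this into $\det(I-M_{1-e^{-f}}Q_n)$. Now assemble $\Phi=(\phi_1,\dots,\phi_N)$ and $\Psi=(\psi_1,\dots,\psi_N)$ into the $\mathbb{C}^N$-valued scattering functions of the theorem: a direct computation of the kernel of $\Gamma_\Psi^T\Gamma_\Phi$ gives $\sum_j\int_0^\infty\psi_j(x+t)\phi_j(t+y)\,dt$, which by the decomposition from (i) and the symmetry $Q_n(x,y)=Q_n(y,x)$ equals $Q_n(x,y)$. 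Hence $Q_n=\Gamma_\Psi^T\Gamma_\Phi$, which is trace class (a product of Hilbert--Schmidt operators), $M_{1-e^{-f}}\Gamma_\Psi^T\Gamma_\Phi$ is trace class, and $\mathbb{E}e^{-\sum f}=\det(I-M_{1-e^{-f}}\Gamma_\Psi^T\Gamma_\Phi)$, as claimed.

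For (iii), put $f(x)=\beta\vartheta(x-t)$ with $\Re\beta>0$; then $e^{-f(x)}$ equals $e^{-\beta}$ on $[t,\infty)$ and $1$ on $[0,t)$, so $1-e^{-f}=\lambda\vartheta(\cdot-t)$ with $\lambda=1-e^{-\beta}$, and $M_{1-e^{-f}}=\lambda P_t$ where $P_t$ is the orthogonal projection of $L^2(0,\infty)$ onto $L^2(t,\infty)$. Because $P_t$ is a projection, $\det(I-\lambda P_t\Gamma_\Psi^T\Gamma_\Phi)=\det(I-\lambda P_t\Gamma_\Psi^T\Gamma_\Phi P_t)$, and this last operator, viewed on $L^2(t,\infty)$, has kernel $\lambda Q_n(x,y)$ on $(t,\infty)^2$. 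Conjugating by the translation unitary $U_t\colon L^2(t,\infty)\to L^2(0,\infty)$, $(U_tg)(x)=g(x+t)$, replaces that kernel by $\lambda Q_n(x+t,y+t)=\lambda\sum_j\int_0^\infty\phi_j\bigl((x+s)+t\bigr)\psi_j\bigl(s+(y+t)\bigr)\,ds$, which is exactly $\lambda\,\Gamma_{\Psi_t}^T\Gamma_{\Phi_t}$ for the shifted scattering functions $\Phi_t(x)=\Phi(x+t)$, $\Psi_t(x)=\Psi(x+t)$. Therefore $\mathbb{E}e^{-\sum f}=\det(I-\lambda\Gamma_{\Psi_t}^T\Gamma_{\Phi_t})$, and since $\sum_j\vartheta(x_j-t)=\sharp\{j:x_j\ge t\}$ coincides almost surely with $\sharp\{j:x_j>t\}$, this is the asserted moment generating function.
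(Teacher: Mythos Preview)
Your treatment of (ii) and (iii) is correct and essentially matches the paper: the determinantal identity $\mathbb{E}\,e^{-\sum f}=\det(I-M_{1-e^{-f}}Q_n)$, the identification $Q_n=\Gamma_\Psi^T\Gamma_\Phi$ once (i) is known, and the translation argument for the step function all go through as you describe.

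The gap is in (i). You correctly identify the content --- that $N$ should be bounded independently of $n$ --- but the route you propose does not work in this generality. For a general generic semiclassical weight there is no finite closed-form Gamma-ratio or Mellin--Barnes representation of $\sqrt{w_0}\,p_n$; the recursion coefficients $(\alpha_n,\beta_n)$ obey nonlinear difference equations (often of discrete Painlev\'e type) and the polynomials are genuinely transcendental in $n$. The assertion that genericity ``keeps the Mellin--Barnes contours and their residue expansions finite and in good position'' is not a proof and is not true in the sense you need; your Laplace identity $\int_0^s e^{-ux}e^{-(s-u)y}\,du=(e^{-sy}-e^{-sx})/(x-y)$ would at best give an integral over an auxiliary spectral parameter, not a finite sum.

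The idea you are missing, which the paper takes from Tracy--Widom [32, \S VI], is to \emph{differentiate} rather than represent. With $Y_n=(\sqrt{w_0}\,p_n,\sqrt{w_0}\,p_{n-1})^T$, Magnus's ladder relations (this is what genericity buys) give $Y_n'=A_n(x)Y_n$ with $A_n$ a traceless $2\times2$ rational matrix whose only finite poles are the zeros of $W$. Since $Q_n(x,y)=h_{n-1}^{-1}\langle JY_n(x),Y_n(y)\rangle/(x-y)$ and $(\partial_x+\partial_y)$ kills $1/(x-y)$, one finds
\[
(\partial_x+\partial_y)Q_n(x,y)=h_{n-1}^{-1}\Bigl\langle \frac{JA_n(x)+A_n(y)^TJ}{x-y}\,Y_n(x),\,Y_n(y)\Bigr\rangle,
\]
and the entries of the bracketed matrix are divided differences $\bigl(g(x)-g(y)\bigr)/(x-y)$ of rational functions $g$ with poles only at the zeros of $W$. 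Partial fractions then separate the right-hand side into a finite sum $-\sum_{j=1}^N\phi_j(x)\psi_j(y)$ with $N$ depending only on $m=\deg W$; integrating back along the diagonal direction gives $Q_n(x,y)=\sum_j\int_0^\infty\phi_j(x+t)\psi_j(t+y)\,dt+q(x-y)$, and $q=0$ since both sides vanish as $x\to\infty$ or $y\to\infty$. The $n$-dependence sits entirely in the coefficients of $A_n$, not in $N$.
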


\begin{proof} (i) Magnus [19] shows that for each such polynomial pair, there exists a weight $w_0$ with Cauchy transform $F$ and a polynomial $U$ such that $WF'=2VF+U$. From (11) of [19], we have $Ww'_0=2Vw_0$. Then by (17) of [19], there exist polynomials $\Omega_n$ and $\Theta_n$, and recursion coefficients $a_n$ such that with the matrices
\begin{equation}\label{iso}Y_n(x)=\begin{bmatrix} \sqrt{w_0(x)}p_n(x)\\ \sqrt{w_0(x)}p_{n-1}(x)\end{bmatrix}; J=\begin{bmatrix} 0&-1\\ 1&0\end{bmatrix}; 
A_n(x)={\frac{1}{W(x)}}
\begin{bmatrix} \Omega_n(x)&-a_n\Theta_n(x)\\ a_n\Theta_{n-1}(x)&-\Omega_n(x)\end{bmatrix}\end{equation}
we have an ordinary differential equation

\begin{equation}\label{ode}{\frac{d}{dx}}Y_n(x) =A_n(x)Y_n(x)\end{equation}
\noindent where the coefficient matrix $A_n(x)$ is rational with trace zero. The three-term recurrence relation (\ref{polyrecurrence}) for $p_n$  gives a positive sequence $(\beta_n)$ and a real sequence $(\alpha_n)$ such that
\begin{equation}Y_{n+1}=\begin{bmatrix}x-\alpha_{n}& -\beta_{n}\\ 1&0\end{bmatrix} Y_n,\end{equation}
so we have a recurrence relation for the matrices in (\ref{ode})  
\begin{equation}A_{n+1}\begin{bmatrix}x-\alpha_{n}& -\beta_{n}\\ 1&0\end{bmatrix}=\begin{bmatrix}x-\alpha_{n}& -\beta_{n}\\ 1&0\end{bmatrix}A_n+\begin{bmatrix} 1& 0\\ 0&0\end{bmatrix},\end{equation}
\noindent where the second matrix has determinant $\beta_{n}>0$, hence the  $A_n$ are uniquely determined. We can therefore follow the approach of section VI of [32]. From the differential equation (\ref{ode}), 
\begin{align} \Bigl( {\frac{\partial}{\partial x}}+{\frac{\partial}{\partial y}}\Bigr) Q_n(x,y)&= {\frac{1}{h_{n-1}}}\Bigl( {\frac{\partial}{\partial x}}+{\frac{\partial}{\partial y}}\Bigr) {\frac{\langle JY_n(x), Y_n(y)\rangle}{x-y}}\nonumber\\
&={\frac{1}{h_{n-1}}}{\frac{\langle (JA_n(x)Y_n(x), Y_n(y)\rangle +\langle JY_n(x), A_n(y)Y_n(y)\rangle}{x-y}}\nonumber\\
&={\frac{1}{h_{n-1}}} \langle B_n(x,y)Y_n(x), Y_n(y)\rangle\end{align}
where $B_n(x,y)=JA_n(x)+A_n(y)^TJ$ is given explicitly by
\begin{equation}B_n(x,y)=\begin{bmatrix} {\frac{ (a_n\Theta_{n-1}/W)(y)-(a_n\Theta_{n-1}/W)(x)}{x-y}}& {\frac{(\Omega_n/W)(x)-(\Omega_n/W)(y)}{x-y}}\\
{\frac{(\Omega_n/W)(x)-(\Omega_n/W)(y)}{x-y}}& {\frac{(a_n\Theta_n/W)(y)-(a_n\Theta_n/W)(x)}{x-y}}\end{bmatrix},\end{equation}
which is rational, symmetric with respect to interchange of variables $x\leftrightarrow y$ and symmetric with respect to matrix transpose. From the identity $Ww'_0=2w_0V$, and cancelling any common zeros of $V$ and $W$, we deduce that $W$ has no zeros on $(0, \infty )$ since $w_0(x)>0$ for all $x>0$ by hypothesis. Observe also that $\int_0^\infty x^kw_0(x)dx$ is finite for all $k\in {\mathbb N}\cup \{ 0\}$. By selecting the products of functions that depend on one variable, namely $x$ or $y$, we can therefore choose $\phi_j$ and $\phi_k$ from among the functions in $B$ and $Y$ such that $\phi_j, \psi_j\in L^2(0,\infty )$ and 
\begin{equation}\Bigl( {\frac{\partial}{\partial x}}+{\frac{\partial}{\partial y}}\Bigr) Q_n(x,y)=-\sum_{j=1}^N \phi_j(x)\psi_j(y).\end{equation}
By integration, we obtain 
\begin{equation} Q_n(x,y)=\sum_{j=1}^N \int_0^\infty \phi_j(x+t)\psi_j(t+y)\, dt +q(x-y)\end{equation}
where $q(x-y)\rightarrow 0$ as $x\rightarrow\infty$ or $y\rightarrow\infty$, so $q=0$. We can select the $\phi_j, \psi_j$ so that $\int_0^\infty x\vert \phi_j(x)\vert^2\, dx$ and $\int_0^\infty x\vert\psi_j(x)\vert^2\, dx$ are all finite, so $\Gamma_{\phi_j}$ and $\Gamma_{\psi_j}$ are Hilbert--Schmidt.\par
\indent (ii) Let $h(x)=1-e^{-f(x)}$ for some $f\in L^\infty$, so that $e^{-\sum_{j=1}^n f(x_j)}=\prod_{j=1}^n (1-h(x_j)).$ Then 
\begin{align}{\mathbb E} e^{-\sum f}&={\frac{ \int_{(0,\infty )^n} \prod_{1\leq j<k\leq n} (x_j-x_k)^2\prod_{j=1}^n (1-h(x_j))w_0(x_j)dx_j}{
\int_{(0,\infty)^n} \prod_{1\leq j<k\leq n} (x_j-x_k)^2\prod_{j=1}^n w_0(x_j)dx_j}}\nonumber\\
&=\det (I-M_hQ_n).\end{align}
We let $\Phi :(0, \infty)\rightarrow {\mathbb C}^{N\times 1}$ be $\Phi (x)={\hbox{column}}[\phi_j(x)]_{j=1}^N$ and $\Psi :(0, \infty)\rightarrow {\mathbb C}^{N\times 1}$ be $\Psi (x)={\hbox{column}}[\psi_j(x)]_{j=1}^N$, 
as in the Corollary, so
\begin{align}{\mathbb E} e^{-f}&=\det (I-M_h\Gamma_{\Phi}^T\Gamma_{\Psi})\nonumber\\
&=\det (I-\Gamma_{\Psi}M_h\Gamma_{\Phi}^T)\end{align}
where the final operator has a matrix kernel
\begin{equation}\Gamma_{\Psi}M_h\Gamma_{\Phi}^T\leftrightarrow \Bigl[\int_0^\infty \psi_j(x+u)(1-e^{-f(u)})\phi_k(u+y)\, du\Bigr]_{j,k=1}^N.\end{equation}
\indent (iii)
For $\Re \beta >0$, the point $\lambda =1-e^{-\beta}$ lies in the disc of centre $1$ and radius $1$ in ${\mathbb C}$. Then for the step function $f(x)=\beta \vartheta (x-t)$ we have
\begin{align}{\mathbb E}e^{-\beta \sum \vartheta (.-t)}&=\sum_{k=0}^\infty e^{-k\beta}{\mathbb {P}}[\natural \{ j: x_j\in (t,\infty )\} =k]\nonumber\\
&=\sum_{k=0}^\infty (1-\lambda )^k{\mathbb {P}}[\natural \{ j: x_j\in (t,\infty )\} =k]\nonumber\\
&=\sum_{k=0}^\infty {\frac{(1-\lambda )^k}{k!}}\Bigl({\frac{d^k}{d\mu^k}}\Bigr)_{\mu =1} \det (I-\mu M_{\vartheta (.-t)}Q_n\bigr),\end{align} 
so we have the moment generating function of the number of the $x_j$ that are greater than $t$. Then

\begin{equation}\label{detstep}(1-e^{-\beta})\Gamma_{\Psi}M_{\vartheta (. -t)}\Gamma_{\Phi}^T
\leftrightarrow \lambda\Bigl[\int_0^\infty \psi_j(x+u+t)\phi_k(u+t+y)\, du\Bigr]_{j,k=1}^N,\end{equation}
where each entry of the matrix is a product of Hankel operators, with scattering functions $\psi_j(x), \phi_k(x)$ shifted to $\psi_j(x+t), \phi_k(x+t)$. \par 
\end{proof}

\indent  Theorem \ref{Hankelfactors} involves a Fredholm determinant. The following result gives an equivalent expression involving finite determinants on the numerator. We introduce the block matrix
\begin{equation}\Theta_t=\begin{bmatrix} 0_{N\times N}& \Psi_t&0_{N\times (N-1)}\\
                              \Phi_t^T&0&0_{1\times (N-1)}\\
                              0_{(N-1)\times N}& 0_{(N-1)\times 1}& 0_{(N-1)\times (N-1)}\end{bmatrix}.\end{equation}

\begin{cor}\label{finitedet}{Suppose that $\Gamma_{\Theta_t}\in {\mathcal L}^1$ and $I+\sqrt{\lambda} \Gamma_{\Theta_t}$ is invertible.\par
(i) Then for any orthogonal projection $P_n$ on $L^2((0, \infty ); {\mathbb C}^{2N})$ with $P_n^\perp =I-P_n$, 
\begin{equation}{\mathbb E}e^{-\beta \sum \vartheta (.-t )}={\frac{\det P_n(I+\sqrt{\lambda} \Gamma_{\Theta_t})P_n}{\det P^\perp_n(I+\sqrt{\lambda} \Gamma_{\Theta_t})^{-1}P^\perp_n}}.
\end{equation}
(ii) Let $L_j(x)$ be the Laguerre polynomial, and let $P_n$ be the orthogonal projection onto 
$${\hbox{span}}\{ e^{-x/2}L_j(x): j=0, \dots, n\} \otimes {\mathbb C}^{2N}.$$
Then $P_n\Gamma_{\Theta_t}P_n$ is unitarily equivalent to a finite block Hankel matrix.}\end{cor}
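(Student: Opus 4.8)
The plan is to derive (i) from a general projection identity for Fredholm determinants together with the identification $\det(I+\sqrt{\lambda}\,\Gamma_{\Theta_t})={\mathbb E}e^{-\beta\sum\vartheta(\cdot-t)}$, and to derive (ii) from the dictionary of Section 2 between Hankel integral operators and Hankel matrices. I would begin with the identification. Splitting $L^2((0,\infty);{\mathbb C}^{2N})=H_A\oplus H_B\oplus H_C$ according to the block sizes $N,1,N-1$ in the definition of $\Theta_t$, the operator $\Gamma_{\Theta_t}$ has the block form in which only the $(A,B)$-entry $\Gamma_{\Psi_t}$ and the $(B,A)$-entry $\Gamma_{\Phi_t^T}$ are nonzero, all other blocks being $0$, so that
\begin{equation*} I+\sqrt{\lambda}\,\Gamma_{\Theta_t}=\begin{bmatrix} I&\sqrt{\lambda}\,\Gamma_{\Psi_t}&0\\ \sqrt{\lambda}\,\Gamma_{\Phi_t^T}&I&0\\ 0&0&I\end{bmatrix}.\end{equation*}
Since $\Gamma_{\Theta_t}\in{\mathcal L}^1$, all operators here are $I$ plus a trace class operator and the identities $\det(I+XY)=\det(I+YX)$ are available; computing the determinant by the Schur complement of the top-left $I$ gives $\det(I+\sqrt{\lambda}\,\Gamma_{\Theta_t})=\det(I-\lambda\,\Gamma_{\Psi_t}\Gamma_{\Phi_t^T})$, and since only $\lambda=(\sqrt{\lambda})^2$ enters, the same holds with $-\sqrt{\lambda}$. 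By invariance of the Fredholm determinant under $AB\mapsto BA$ and under transposition, this equals $\det(I-\lambda\,\Gamma^T_{\Psi_t}\Gamma_{\Phi_t})$, which by Theorem \ref{Hankelfactors}(iii) is ${\mathbb E}e^{-\beta\sum\vartheta(\cdot-t)}$.

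For (i) the remaining ingredient is the general fact that, for a trace class $K$ with $T=I+K$ invertible and an orthogonal projection $P$ such that the compression $PTP$ is invertible on ${\rm ran}\,P$, one has $\det T=\det(PTP)/\det(P^\perp T^{-1}P^\perp)$, the determinants of the compressions being taken on ${\rm ran}\,P$ and ${\rm ran}\,P^\perp$. I would prove this from the block $LDU$-factorization $T=\left[\begin{smallmatrix}I&0\\CA^{-1}&I\end{smallmatrix}\right]\left[\begin{smallmatrix}A&0\\0&S\end{smallmatrix}\right]\left[\begin{smallmatrix}I&A^{-1}B\\0&I\end{smallmatrix}\right]$ relative to ${\rm ran}\,P\oplus{\rm ran}\,P^\perp$, where $A=PTP$ and $S=D-CA^{-1}B$ is the Schur complement: the two triangular factors have determinant $1$ because their off-diagonal entries are trace class and strictly block-triangular, so $\det T=(\det A)(\det S)$, and $S^{-1}$ is exactly the ${\rm ran}\,P^\perp$-block of $T^{-1}$. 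Applying this with $T=I+\sqrt{\lambda}\,\Gamma_{\Theta_t}$ and $P=P_n$ and using the previous paragraph yields the stated formula. (The displayed ratio presumes $P_n(I+\sqrt{\lambda}\,\Gamma_{\Theta_t})P_n$ invertible on ${\rm ran}\,P_n$; this is automatic for $P_n=I$, and for the projections of part (ii) it holds for all large $n$ since the finite truncations converge in trace norm to the invertible operator $I+\sqrt{\lambda}\,\Gamma_{\Theta_t}$.)

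For (ii) I would invoke the unitary chain of Section 2: the Laplace transform ${\mathcal L}\colon L^2(0,\infty)\to H^2({\mathbb C}_+)$ followed by expansion in the rational basis (\ref{rationalbasis})---equivalently, expansion of the original function in the Laguerre orthonormal basis $(e^{-x/2}L_j)_{j\ge0}$---carries a scalar Hankel integral operator $\Gamma_\phi$ to the matrix $[\langle\Gamma_\phi(e^{-\cdot/2}L_k),e^{-\cdot/2}L_j\rangle]_{j,k\ge0}$, and a short generating-function (or Mellin-transform) computation of the type producing the moments $\mu_n$ in Section 2 shows that this entry depends on $j,k$ only through $j+k$; that is, $\Gamma_\phi$ is unitarily equivalent to a Hankel matrix. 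As every entry of the matrix-valued function $\Theta_t$ is a scalar function, applying this entrywise over the basis $(e^{-x/2}L_j)_{j\ge0}\otimes{\mathbb C}^{2N}$ shows $\Gamma_{\Theta_t}$ is unitarily equivalent to a block Hankel matrix $[G_{j+k}]_{j,k\ge0}$ with $2N\times2N$ blocks $G_m$. The projection $P_n$ onto ${\rm span}\{e^{-x/2}L_j:0\le j\le n\}\otimes{\mathbb C}^{2N}$ corresponds to restricting indices to $0\le j,k\le n$, so $P_n\Gamma_{\Theta_t}P_n$ is unitarily equivalent to the finite block Hankel matrix $[G_{j+k}]_{j,k=0}^n$, which is the assertion.

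I expect the main obstacle to be bookkeeping rather than conceptual: justifying the block-determinant manipulations ($\det(I+XY)=\det(I+YX)$ and the $LDU$-factorization with the triangular factors contributing $1$) for operators of the form $I+{\mathcal L}^1$ on an infinite-dimensional space, handling the tacit non-degeneracy needed for the ratio in (i) to be meaningful for an arbitrary $P_n$, and pinning down the normalisation in the Hankel-integral-operator to Hankel-matrix dictionary so that the limit object is literally $[G_{j+k}]$ rather than a transposed or reindexed variant.
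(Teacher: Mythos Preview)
Your proof is correct and follows essentially the same route as the paper. The paper's own proof is much terser: for (i) it simply states that $\Theta_t$ was chosen so that $\det(I+\sqrt{\lambda}\,\Gamma_{\Theta_t})={\mathbb E}e^{-\beta\sum\vartheta(\cdot-t)}$ by Theorem \ref{Hankelfactors}(iii) and then invokes ``a determinant formula credited to Jacobi; see [2]'' for the projection identity, while for (ii) it cites Peller [24], page 53, for the Laguerre-basis correspondence between Hankel integral operators and Hankel matrices and then writes down the block matrix $\bigl[\int_0^\infty \Theta_t(x)e^{-x/2}L_{j+k}(x)\,dx\bigr]_{j,k=0}^{n-1}$. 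Your Schur-complement verification of the first identity and your $LDU$ derivation of Jacobi's formula supply exactly the details the paper omits, and your remark about the tacit invertibility of $P_n(I+\sqrt{\lambda}\,\Gamma_{\Theta_t})P_n$ is a point the paper does not address.
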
                             
  
\begin{proof} (i) We have especially chosen $\Theta$ so that by Theorem \ref{Hankelfactors}(iii), we have
\begin{equation}{\mathbb E}e^{-\beta \sum \vartheta (.-t )}=\det (I+\sqrt{\lambda} \Gamma_{\Theta_t}).\end{equation}  
Then the stated result follows from a determinant formula credited to Jacobi; see [2].\par   
(ii) Hankel integral operators correspond to Hankel matrices via the Laguerre orthonormal basis of $L^2(0, \infty );$ see [24], page 53. (This is a special feature of the Laguerre polynomials.) To extend this to Hankel integral operators on $L^2((0, \infty ); {\mathbb C}^{2N})$, we just compute the block Hankel matrix
$$\Bigl[\int_0^\infty \Theta_t(x)e^{-x/2}L_{j+k}(x)\, dx\Bigr]_{j,k=0}^{n-1}$$
which has $(2N)\times (2N)$ block entries, and the cross-diagonal pattern that is characteristic of Hankel matrices.    
\end{proof}

\indent Theorem \ref{Hankelfactors} shows that replacing $w(x)$ by $w(x)e^{-\beta\vartheta (x-t)}$ corresponds shifting $\Theta_0$ to $\Theta_t$. The shift operation is simple to describe in terms of linear systems, as in (\ref{linsys2}). Unfortunately, $\vartheta$ is discontinuous, so $w(x)e^{-\beta\vartheta (x-t)}$ is not itself a semiclassical weight, and we cannot immediately deduce a differential equation such as \ref{ode} for orthogonal polynomials generated by $w(x)e^{-\beta\vartheta (x-t)}$. Nevetheless, Min Chao and Chen [21] derived an ODE for gap probabilities in the Jacobi ensemble. \par  
\indent Hence we replace the step function by 
\begin{equation}f(x)=\beta\tan^{-1} {\frac{x-t}{\varepsilon}}\end{equation}
for $\varepsilon>0,$ and $t, \beta\in {\mathbb R}$, since $f(x)\rightarrow\beta \pi (\vartheta (x-t)-1/2)$ as $\varepsilon\rightarrow 0+$.\par
\indent As in Theorem \ref{Hankelfactors}, we suppose that $w_0$ satisfies $Ww_0'=2Vw_0$, where $V,W$ are polynomials, and let $v_0=-\log w_0$. 
Then there exists $\varepsilon_0>0$ such that 
$$(2V(x)(x-z_+)(x-z_-)+i\beta (z_+-z_-)W(x), W(x)(x-z_+)(x-z_-))$$ 
is also generic for all real $\beta$ and $0<\Im z_+<\varepsilon_0$ and $0<-\Im z_-<\varepsilon_0$. In particular, we can replace our previous weight $w_0(x)$ by
\begin{equation}w(x)=w_0(z) (x-z_+)^{i\beta/2}(x-z_-)^{-i\beta /2}\end{equation}
then we build the system of monic orthogonal polynomials $(p_j(x))_{j=0}^\infty$ for the complex bilinear form $\langle f,g\rangle=\int_E f(x)g(x)w(x)\, dx$.
\begin{prop}\label{Pain} Suppose that $(2V,W)$ is generic.\par
(i) There exists $\varepsilon_0>0$ such that
\begin{equation}\label{parameters} \bigl(2V(x)(\varepsilon^2+(x-t)^2)+\varepsilon\beta W(x), W(x)(\varepsilon^2+(x-t)^2)\bigr),\end{equation}
is generic for all real $\beta$ and $0<\varepsilon<\varepsilon_0$; 
\par
(ii) there exists a consistent system of ordinary differential equations as in (\ref{iso}) 
\begin{align}\label{mainode}{\frac{dY}{dx}}& =A(x,t; \beta , \varepsilon )Y\\
{\frac{dY}{dt}}& =H(x,t; \beta , \varepsilon )Y,
\end{align}
where $A(x,t; \beta , \varepsilon )$ is a proper rational function of $x$ with trace zero, and simple poles at the zeros of $W$ and $t\mp i\varepsilon$;\par
(iii) the consistency condition holds
\begin{equation}\label{consist} {\frac{\partial A}{\partial t}}-{\frac{\partial H}{\partial x}}+\bigl[ A, H \bigr]=0.\end{equation}\end{prop}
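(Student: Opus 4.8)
The plan is to treat the three parts in turn, running Magnus's construction [19] exactly as in the proof of Theorem \ref{Hankelfactors} but with $W$ replaced by the deformed polynomial $\tilde W(x)=W(x)(\varepsilon^2+(x-t)^2)=W(x)(x-t-i\varepsilon)(x-t+i\varepsilon)$ and $2V$ by $2\tilde V(x)=2V(x)(\varepsilon^2+(x-t)^2)+\varepsilon\beta W(x)$, so that $(2\tilde V,\tilde W)$ is the pair in (\ref{parameters}). For part (i) I would take $\varepsilon_0$ to be the distance from $t$ to the nearest zero of $W$, which is positive since $t$ is real and $W$ has no zero there (cf.\ the proof of Theorem \ref{Hankelfactors}); for $0<\varepsilon<\varepsilon_0$ the new zeros $t\pm i\varepsilon$ are then distinct from one another and from every zero $\alpha_j$ of $W$, so $\tilde W$ has $m+2$ simple zeros. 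Since $\deg\tilde V\leq\max(\deg V+2,\deg W)\leq m+1<m+2=\deg\tilde W$, the rational function $2\tilde V/\tilde W$ is proper. It remains to read off the residues: at a zero $\alpha_j$ of $W$ the identities $\tilde W'(\alpha_j)=W'(\alpha_j)(\varepsilon^2+(\alpha_j-t)^2)$ and $2\tilde V(\alpha_j)=2V(\alpha_j)(\varepsilon^2+(\alpha_j-t)^2)$ show that the residue is the old one, $2V(\alpha_j)/W'(\alpha_j)\notin{\mathbb Z}$; and at $x=t\pm i\varepsilon$, where $\tilde W'(t\pm i\varepsilon)=\pm 2i\varepsilon\,W(t\pm i\varepsilon)$ and $2\tilde V(t\pm i\varepsilon)=\varepsilon\beta\,W(t\pm i\varepsilon)$, the residue is $\mp i\beta/2$, which for real $\beta\neq0$ is non-real and hence not an integer, while for $\beta=0$ the factor $\varepsilon^2+(x-t)^2$ cancels and we recover the generic pair $(2V,W)$. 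This proves (i).

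For part (ii), genericity of (\ref{parameters}) lets Magnus's construction proceed: there is a weight $w=w(x,t)$, namely $w_0(x)$ times a power-law factor with branch points at $t\pm i\varepsilon$, satisfying $\tilde W\,\partial_x w=2\tilde V\,w$; writing $Y=Y_n(x,t)=\mathrm{column}[\sqrt w\,p_n,\sqrt w\,p_{n-1}]$ for the monic orthogonal polynomials $p_n=p_n(\cdot,t)$ of $w$, one obtains a first-order system $dY/dx=A(x,t;\beta,\varepsilon)Y$ of the shape (\ref{iso})--(\ref{ode}) in which $A$ is rational and proper in $x$, of trace zero, with simple poles exactly at the zeros of $\tilde W$, i.e.\ at the $\alpha_j$ and at $t\pm i\varepsilon$; properness and the trace-zero and degree bounds come from Magnus's normalization of the auxiliary polynomials $\Omega_n,\Theta_n$ to degree $<\deg\tilde W$. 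For the $t$-equation I would differentiate in $t$ the orthogonality relations and the recurrence coefficients $\alpha_n(t),\beta_n(t)$, using
\begin{equation*}\frac{\partial_t w}{w}=-\frac{\varepsilon\beta}{\varepsilon^2+(x-t)^2},\end{equation*}
which is rational in $x$ with simple poles only at $t\pm i\varepsilon$; since $\partial_t p_n$ is a polynomial of degree $\leq n-1$, expanding it in $p_0,\dots,p_{n-1}$ and collapsing with the three-term recurrence to the Christoffel--Darboux pair $p_n,p_{n-1}$ produces $dY/dt=H(x,t;\beta,\varepsilon)Y$ with $H$ rational in $x$ and poles confined to $t\pm i\varepsilon$. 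The same computation applied to the functions of the second kind $\int_E p_n(y,t)w(y,t)(x-y)^{-1}\,dy$ shows that the $2\times2$ fundamental matrix $\mathcal Y$ of $dY/dx=AY$, assembled from $Y_n$ and this second solution, satisfies both equations.

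Part (iii) is then formal: differentiating $\mathcal Y$ in both variables and equating the mixed partials yields $\bigl(\partial_t A-\partial_x H+[A,H]\bigr)\mathcal Y=0$, and since $\mathrm{tr}\,A=0$ the Wronskian $\det\mathcal Y$ is independent of $x$ and nonzero, so $\mathcal Y$ is invertible and (\ref{consist}) follows. This is the zero-curvature (isomonodromic deformation) identity that underlies the subsequent Painlev\'e analysis.

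The step I expect to be the main obstacle lies in part (ii): one must check that the $t$-deformation is consistent with the $x$-ladder on the entire $2\times2$ system (equivalently, that the functions of the second kind obey the same equation $dY/dt=HY$ as the polynomial solution) and that the polynomial numerators occurring in $A$ and $H$ really stay within the degree bounds that keep $A$ proper and confine the extra poles to $t\pm i\varepsilon$; this is exactly where the genericity of (\ref{parameters}) proved in part (i) enters. A lesser point to watch is that $w(x,t)$ is complex-valued, so the bilinear form $\langle f,g\rangle=\int_E f(x)g(x)w(x,t)\,dx$ is not an inner product and $p_n(\cdot,t)$ exists only when the relevant Hankel determinant is nonzero, which holds for generic values of the parameters.
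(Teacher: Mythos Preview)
Your argument is correct in outline and matches the paper for parts (i) and (iii); in fact your part (i) is more explicit than the paper's, which dismisses it as a ``direct check of the definitions'' and reserves $\varepsilon_0$ instead for the side issue of ensuring that Gram--Schmidt for the deformed complex bilinear form produces polynomials up to the required degree.

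The genuine difference is in part (ii). You propose to construct $H$ by differentiating the orthogonality relations and recurrence coefficients in $t$, then verifying that the functions of the second kind obey the same $t$-equation so that the full $2\times 2$ fundamental matrix satisfies both systems. The paper instead takes the fundamental solution matrix $Y$ of (\ref{mainode}) and simply \emph{defines} $H=(\partial Y/\partial t)\,Y^{-1}$, so that $dY/dt=HY$ holds by construction; the content is then that $H$ has only simple poles at $z_\pm=t\pm i\varepsilon$, which the paper reads off from Schlesinger's isomonodromy theory (citing [25]) as $H=-A_+/(x-z_+)-A_-/(x-z_-)$. This bypasses exactly the step you flag as the main obstacle --- checking that both independent solutions obey the $t$-equation --- at the cost of invoking the Schlesinger machinery. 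Your direct ladder-operator approach, in the spirit of Chen--Its [7], would work but is more laborious; the paper's route is shorter once Schlesinger's equations are granted, and it also delivers immediately the explicit commutator formulas for $\partial A_j/\partial t$ and $\partial A_\pm/\partial t$ that feed into the Painlev\'e VI reduction in Corollary \ref{PVI}.
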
 

\begin{proof} (i) This is a direct check of the definitions. Then the modified potential $v=-\log w$ has $v'$ rational, and we obtain a family of pairs of polynomials, depending upon parameters $(t, \varepsilon, \beta )$. For given $n$, we can choose $\varepsilon_0>0$ such that the Gram-Schmidt process for the bilinear form $\langle f,g\rangle$ produces orthogonal polynomials of degree up to $n$, for all $0<\varepsilon<\varepsilon_0$.\par
(ii) Magnus [19] obtains $\Theta_n$ and $\Omega_n$ by recursion, and one checks that the degree of $\Theta_n$ is less than or equal to $m$, while the degree of the denominator is $m+2$. From his recursion formula (20), the degree of $\Omega_n^2$ is less than or equal to $2(m+1)$, so $A(x,t; \beta , \varepsilon )$ is strictly proper. 
By Proposition \ref{Pain}(ii), we can write
\begin{equation}\label{A(x)} A(x,z_{\pm})={\frac{A_+}{x-z_+ }}+{\frac{A_-}{x-z_-}}+\sum_{j=1}^m {\frac{A_j}{x-\alpha_j}}\end{equation}
where the $2\times 2$  residue matrices $A_j, A_{\pm}$ depend upon $(\beta,z_{\pm} )$, but not upon $x$. The set of singular points in the Riemann sphere ${\mathbb C}\cup\{ \infty\}$ is $\{ \alpha_1, \dots, \alpha_m, z_{\pm} , \infty\}$. \par
We can take $z_{\pm}=t\pm i\varepsilon$, a complex conjugate pair. Then we fix $\beta\in {\mathbb R}$ and some $0<\varepsilon <\varepsilon_0$ and regard $t$ as the main deformation parameter. Then the weight
\begin{equation}\label{newweight}w(x)=w_0(x)\exp\Bigl( {\frac{\beta \pi}{2}}-\beta\tan^{-1}{\frac{x-t}{\varepsilon}}\Bigr)\end{equation}
is positive and continuous on $E$, so $p_j$ is a real polynomial and $h_j>0$. 
Since the differential equation (\ref{mainode}) has only regular singular points, the monodromy is fully described in [25] by results of Schlesinger page 148 and Dekkers page 180 in terms of connections of dimension two on the punctured Riemann sphere.
Schlesinger found the condition for the system to undergo an infinitesimal change in the poles $\{ \alpha_1, \dots, \alpha_m; z_{\pm}\}$ that does not change the monodromy. Let $Y$ be the fundamental solution matrix of (\ref{mainode}), and introduce \begin{align}H&={\frac{\partial Y}{\partial t}}Y^{-1}\\
 &=\Bigl({\frac{\partial Y}{\partial z_+}}+{\frac{\partial Y}{\partial z_-}}\Bigr)Y^{-1}\\
 &=-{\frac{A_+}{x-z_+}}-{\frac{A_-}{x-z_-}}\end{align}
  to obtain the required variation in $z_{\mp}$.\par
\indent (iii)  This formula follows from the equality of mixed partial derivatives $\partial^2Y/\partial t\partial x=\partial^2Y/\partial x\partial t$ where $Y$ is the fundamental solution matrix of (\ref{mainode}) and
$\partial /\partial t=\partial /\partial z_++\partial /\partial z_-$. To ensure that the differential equations are indeed consistent, we require
\begin{equation} {\frac{\partial A(x,z_{\pm})}{\partial t}}={\frac{A_+}{(x-z_+)^2 }}+{\frac{A_-}{(x-z_-)^2}}+{\frac{{\frac{\partial A_+}{\partial t}}}{x-z_+}}+{\frac{{\frac{\partial A_-}{\partial t}}}{x-z_-}}+\sum_{j=1}^m {\frac{{\frac{\partial A_j}{\partial t}}}{x-\alpha_j}},\end{equation}
where by Schlesinger's equations
\begin{equation} 
{\frac{\partial A_{\pm}}{\partial t}}=-\sum_{j=1}^m {\frac{[A_j, A_{\pm}]}{\alpha_j-z_{\pm}}},\end{equation}
\begin{equation} {\frac{\partial A_j}{\partial t}}={\frac{[A_j, A_+]}{\alpha_j-z_+}}+{\frac{[A_j, A_-]}{\alpha_j-z_-}}\qquad (j=1, \dots, m).\end{equation}
\end{proof} 

\begin{cor}\label{PVI}Suppose in (\ref{A(x)}) that $m=1$, that $A_++A_-+A_1$ is a diagonal matrix and 
\begin{equation} {\hbox{trace}}\, A_+={\hbox{trace}}\, A_-={\hbox{trace}}\, A_1=0.\end{equation}
Then (\ref{consist}) reduces to a Painlev\'e VI equation.\end{cor}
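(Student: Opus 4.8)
The plan is to recognise the system (\ref{A(x)}) with $m=1$ as a rank-two Fuchsian system on the Riemann sphere with exactly four regular singular points, namely $\alpha_1$, $z_+$, $z_-$ and $\infty$, and then to invoke the classical equivalence between isomonodromic deformations of such a system and the Painlev\'e VI equation, as established by Fuchs, Schlesinger and Jimbo--Miwa. Proposition \ref{Pain} already supplies the Schlesinger equations for $\partial A_\pm/\partial t$ and $\partial A_1/\partial t$ together with the zero-curvature form (\ref{consist}), so the task is to reduce this first-order matrix system to a single second-order scalar ODE of Painlev\'e VI type.

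First I would record the effect of the two standing hypotheses. Since each of $A_+,A_-,A_1$ is $2\times 2$ with trace zero, its eigenvalues are $\pm\theta_\nu/2$ for exponents $\theta_{z_+},\theta_{z_-},\theta_{\alpha_1}$ that are fixed throughout the deformation; they are read off from the residues $2V(\alpha_j)/W'(\alpha_j)$ and the analogous residues at $z_\pm$, which are the data entering Magnus's construction. The residue at infinity is $A_\infty=-(A_++A_-+A_1)$, and the hypothesis that $A_++A_-+A_1$ is diagonal and traceless fixes the gauge at $\infty$: $A_\infty=\mathrm{diag}(\theta_\infty/2,-\theta_\infty/2)$ for a fourth exponent $\theta_\infty$. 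With this normalisation the only remaining freedom is conjugation by a diagonal matrix, which I will use to reduce the number of scalar unknowns.

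Next I would introduce the standard Painlev\'e VI coordinates. Write the $(1,2)$ entry of $A(x,z_\pm)$ as a rational function of $x$ whose numerator is linear; its unique zero $y=y(t)$, the apparent singularity of the associated scalar second-order equation for the first component of $Y$, is the principal unknown, and a conjugate variable $p$ is taken to be a suitable value built from $A(y,z_\pm)$. Substituting the explicit residue parametrisation into the Schlesinger equations of Proposition \ref{Pain} turns them into a Hamiltonian system $\dot y=\partial_p\mathcal{H}$, $\dot p=-\partial_y\mathcal{H}$ in the time variable $s$, where $s$ is the cross-ratio of the four points $\{\alpha_1,z_+,z_-,\infty\}$; eliminating $p$ yields the Painlev\'e VI equation for $y(s)$ with parameters that are explicit quadratic functions of $\theta_{z_+},\theta_{z_-},\theta_{\alpha_1},\theta_\infty$. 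Finally, since $z_\pm=t\pm i\varepsilon$ and $\alpha_1$ is held fixed, the cross-ratio $s$ is a M\"obius function of $t$; Painlev\'e VI is covariant under such changes of independent variable, which merely permute the four exponents and hence the parameters, so (\ref{consist}) is equivalent to a Painlev\'e VI equation in $t$ as well.

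The main obstacle is the reduction step itself: one must choose the apparent-singularity coordinate and its conjugate correctly, keep track of the traceless-and-diagonal-at-infinity normalisation while writing out the four $2\times 2$ residue matrices in terms of $(y,p)$ and the fixed exponents, and then perform the elimination that converts the Schlesinger flow into the second-order ODE. A secondary point requiring care is that our deformation moves the pair $z_\pm$ together rather than a single pole, so one should either pass to the cross-ratio $s$ as above or verify directly that the combined flow $\partial_{z_+}+\partial_{z_-}$ descends to the Painlev\'e VI vector field after this change of variable.
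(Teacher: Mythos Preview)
Your proposal is correct and follows the classical Jimbo--Miwa reduction, which is also what the paper invokes; the underlying mechanism is the same. The one point of genuine difference is how you handle the fact that the deformation moves the pair $z_\pm=t\pm i\varepsilon$ simultaneously rather than a single pole. You propose to pass to the cross-ratio of $\{\alpha_1,z_+,z_-,\infty\}$ as the independent variable, or alternatively to check directly that the combined flow $\partial_{z_+}+\partial_{z_-}$ descends to the Painlev\'e VI vector field. The paper instead uses a simpler device: translate $z\mapsto z+t$, which sends the singular set $(t-i\varepsilon,t+i\varepsilon,\alpha_1,\infty)$ to $(-i\varepsilon,+i\varepsilon,\alpha_1-t,\infty)$, so that the two poles $\pm i\varepsilon$ are now fixed and only the single pole $\alpha_1-t$ moves with $t$. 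This puts the system immediately into the standard Jimbo--Miwa normal form with one moving finite pole, and the references [15] and [17] can then be cited directly without further manipulation. Your cross-ratio approach achieves the same end and is entirely valid, but the translation is the quicker route and sidesteps the need to track how the combined $\partial_{z_+}+\partial_{z_-}$ flow interacts with the parametrisation. Beyond this, your sketch is considerably more explicit than the paper's proof, which simply performs the translation and defers the reduction itself to the literature; your account of the apparent-singularity coordinate $y$, its conjugate $p$, and the role of the diagonal-at-infinity gauge is exactly what those references contain.
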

\begin{proof} By translating $z$ to $z+t$, we replace the singular points $(t-i\varepsilon , t+i\varepsilon, \alpha_1, \infty )$  by $(-i\varepsilon , +i\varepsilon, \alpha_1-t, \infty )$, so we have variation in only one pole. Then we can apply known results from [15] and [17] to reduce the compatibility condition (\ref{consist}) to a Painlev\'e VI ordinary differential equation. 
\end{proof}

\begin{rem} (i) Chen and Its [7] showed that the Hankel determinant $D[w]$ gives the isomonodromic $\tau$ function for the system of Schlesinger equations that describe the isomonodromic deformation of (\ref{mainode}) with respect to the position of the poles. The Schlesinger equations may be solved in terms of the $\Theta$-function on a hyperelliptic Riemann surface, as in [18]. The solutions to the monodromy preserving differential equations have singularities which are poles, except for the fixed singularities.
Previously, Magnus [19] had found conditions for the system (\ref{iso}) to undergo an isomonodromic deformation, and obtained examples that realise the nonlinear Painlev\'e VI equation as (\ref{consist}).\par
(ii) Tracy and Widom considered Fredholm determinants $\det (I-\Gamma_{\phi_t}^2)$ for classical orthogonal polynomials [31, 32] and computed $(d/dt)\log\det (I-\Gamma_{\Phi_t}\Gamma_{\Psi_t})$ in terms of operator kernels. They identified weights that produce Painlev\'e $II$, $III$, $IV$ and $V$. For differential equations (\ref{ode}) with $W(x)=1$, that have polynomial coefficients,  Palmer [22] identified $\det (I-\Gamma_{\phi_t}^2)$ as the $\tau$-function of the ODE (\ref{ode}) for isomonodromic deformations. His analysis addressed the case in which infinity is an irregular singular point. \par  
(iii) By taking $\varepsilon \rightarrow 0+$, have $z_{\pm}\rightarrow t$ and 
\begin{equation}\label{Dtheta}D_n[w]= 
\det\Bigl[\int_{0}^\infty x^{j+k}e^{2\pi \beta (1-\vartheta (x-t))}w_0(x)\, dx\Bigr]_{j,k=0}^{n-1}.\end{equation}
In section 7, we consider the behaviour of this determinant for large $n$.
\end{rem}
\end{section}
\begin{section} {Wiener--Hopf Factorization} 
\vskip.05in
\noindent Fix $0<\varepsilon <1$. Let ${\mathcal C}_2^0={\mathcal C}_2^0(\varepsilon )$ be the space of functions $f$ such that:\par
\indent (i) $f$ is bounded and analytic on $\{ z: \vert \Re z\vert <\varepsilon\}$;\par
\indent (ii) $f(\eta +i\xi )\rightarrow 0$ as $\xi\rightarrow\pm \infty$, uniformly for $\vert \eta\vert \leq \varepsilon /2$;\par
\indent (iii)
$$\sup_{\vert \eta \vert <\varepsilon /2} \int_{-\infty}^\infty \vert f(\eta+i\xi  )\vert^2 d\xi <\infty.$$
\noindent  Let ${\mathcal C}_2={\mathcal C}_2^0+ {\mathbb C}$.\par
\vskip.05in
\begin{prop}\label{Hom}(i) Then ${\mathcal C}_2$ is a commutative and unital Banach algebra under the usual pointwise multiplication,\par
\indent  (ii) there is a bounded linear map $\psi \mapsto \Gamma_\phi^\dagger $ from ${\mathcal C}^0_2\rightarrow {\mathcal L}^2$ via the transform (\ref{transform}).\end{prop}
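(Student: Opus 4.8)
The plan is to treat (i) as a routine verification of the Banach-algebra axioms for a suitable norm, and (ii) as a Paley--Wiener estimate obtained by shifting the contour in (\ref{transform}). For (i), I would equip ${\mathcal C}_2^0$ with the norm
$$\Vert f\Vert=\sup_{\vert\Re z\vert<\varepsilon}\vert f(z)\vert+\sup_{\vert\eta\vert<\varepsilon/2}\Bigl(\int_{-\infty}^\infty\vert f(\eta+i\xi)\vert^2\,d\xi\Bigr)^{1/2},$$
writing $\Vert f\Vert_\infty$ for the first summand, and put $\Vert f_0+\lambda\Vert=\Vert f_0\Vert+\vert\lambda\vert$ on ${\mathcal C}_2={\mathcal C}_2^0+{\mathbb C}$; the sum is direct, since a nonzero constant violates (ii), so the decomposition is unique. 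Commutativity and the fact that $1$ is a unit are immediate. To see ${\mathcal C}_2^0$ is closed under products, note analyticity and (ii) are visibly stable under multiplication (a product of functions tending to $0$ tends to $0$), while $\int\vert fg\vert^2\leq\Vert g\Vert_\infty^2\int\vert f\vert^2$ handles (iii); the same estimate with $\Vert fg\Vert_\infty\leq\Vert f\Vert_\infty\Vert g\Vert_\infty$ gives $\Vert fg\Vert\leq\Vert f\Vert_\infty\Vert g\Vert\leq\Vert f\Vert\,\Vert g\Vert$, so the norm is submultiplicative; and since $(f_0+\lambda)(g_0+\mu)=(f_0g_0+\lambda g_0+\mu f_0)+\lambda\mu$ with the bracketed term in ${\mathcal C}_2^0$, the algebra ${\mathcal C}_2$ is closed as well.

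For completeness I would take a Cauchy sequence $(f_n)$ in ${\mathcal C}_2^0$: it is uniformly Cauchy on the strip $\vert\Re z\vert<\varepsilon$, hence converges uniformly to a bounded holomorphic $f$; for each $\eta$ with $\vert\eta\vert<\varepsilon/2$ the sequence $f_n(\eta+i\,\cdot)$ is Cauchy in $L^2({\mathbb R})$, so its limit agrees a.e.\ with $f(\eta+i\,\cdot)$, which together with boundedness of the $L^2$-norms of the $f_n$ and a passage to the limit yields (iii) for $f$ and $\Vert f_n-f\Vert\to0$; while (ii) for $f$ follows from the uniform convergence and (ii) for a single $f_n$. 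Adjoining ${\mathbb C}$ preserves completeness, which finishes (i).

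For (ii), given $\psi\in{\mathcal C}_2^0$ define $\phi$ on $(0,\infty)$ by (\ref{transform}); since $\psi(i\,\cdot)\in L^2({\mathbb R})$, $\phi$ is the complex conjugate of an $L^2$-inverse Fourier transform, so $\phi\in L^2(0,\infty)$ and $\Gamma_\phi$ is a bona fide Hankel integral operator. A direct computation of the Hilbert--Schmidt norm of the kernel $(x,y)\mapsto\phi(x+y)$, or Proposition~\ref{commutingdiagram}, gives $\Vert\Gamma_\phi^\dagger\Vert_{{\mathcal L}^2}^2=\Vert\Gamma_\phi\Vert_{{\mathcal L}^2}^2=\int_0^\infty x\,\vert\phi(x)\vert^2\,dx$, so it suffices to bound that quantity by $C(\varepsilon)\Vert\psi\Vert^2$. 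Fix $0<\sigma<\varepsilon/2$ and shift the contour in (\ref{transform}) from $\Re z=0$ to $\Re z=-\sigma$, which is legitimate because $\psi$ is analytic on the strip (condition (i)) and tends to $0$ there (condition (ii)), so the horizontal sides of a tall rectangle contribute nothing; this produces, for $t>0$,
$$\overline{\phi(t)}=e^{-\sigma t}\int_{-\infty}^\infty\psi(-\sigma+i\xi)e^{i\xi t}\,\frac{d\xi}{2\pi}.$$
By Plancherel $\int_0^\infty e^{2\sigma t}\vert\phi(t)\vert^2\,dt\leq\frac{1}{2\pi}\int_{-\infty}^\infty\vert\psi(-\sigma+i\xi)\vert^2\,d\xi$, and since $\sup_{x>0}xe^{-2\sigma x}=1/(2\sigma e)$,
$$\int_0^\infty x\,\vert\phi(x)\vert^2\,dx\leq\frac{1}{4\pi\sigma e}\sup_{\vert\eta\vert<\varepsilon/2}\int_{-\infty}^\infty\vert\psi(\eta+i\xi)\vert^2\,d\xi\leq C(\varepsilon)\Vert\psi\Vert^2.$$
Linearity of $\psi\mapsto\Gamma_\phi^\dagger$ is clear from the defining formulas, giving (ii).

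The step I expect to be the main obstacle is this contour shift: because $\psi$ is controlled only in $L^2$, not $L^1$, on vertical lines, the integral (\ref{transform}) is a priori only an $L^2$-Fourier integral, so Cauchy's theorem does not apply verbatim. I would handle this in the standard Paley--Wiener way --- mollify $\psi$ in the imaginary direction by convolving $\psi(\eta+i\,\cdot)$ with a smooth compactly supported approximate identity, which keeps $\psi_\delta$ analytic on a slightly smaller strip, does not increase $\Vert\cdot\Vert$, and renders all integrals absolutely convergent so that the rectangular-contour argument is elementary --- then let $\delta\to0$ and invoke Fatou. Condition (ii) is precisely what forces the horizontal contributions to vanish, both before and in the limit.
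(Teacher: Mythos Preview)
Your argument is correct; the route for part (ii) is genuinely different from the paper's.

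For (i) you proceed essentially as the paper does, defining the same $L^\infty+L^2$ norm and checking submultiplicativity and completeness, but you are more careful: you equip ${\mathcal C}_2^0$ with that norm and then unitize, whereas the paper writes the norm directly on ${\mathcal C}_2={\mathcal C}_2^0+{\mathbb C}$, which is infelicitous since a nonzero constant has infinite $L^2$-part. Your treatment avoids that glitch.

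For (ii) the paper argues as follows: bound $\int_0^\infty t\,|\phi(t)|^2\,dt\le\int_0^\infty(1+t^2)|\phi(t)|^2\,dt$, use Plancherel to turn the right side into $\int|\psi(i\xi)|^2\,d\xi+\int|\psi'(i\xi)|^2\,d\xi$, and then control $\psi'$ on the imaginary axis via Cauchy's integral formula over circles of radius $\varepsilon/2$, so that $\int|\psi'(i\xi)|^2\,d\xi$ is bounded by a constant times $\sup_{|\eta|<\varepsilon/2}\int|\psi(\eta+i\xi)|^2\,d\xi$. You instead shift the contour in (\ref{transform}) to $\Re z=-\sigma$ to extract exponential decay $|\phi(t)|\le e^{-\sigma t}|\cdots|$, then trade the factor $t$ for $e^{2\sigma t}$ and apply Plancherel on the shifted line. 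Both arguments use the analyticity on the strip in an essential way; the paper's version has the advantage of being entirely local (Cauchy's formula on small circles needs no justification beyond holomorphy), so it sidesteps the $L^2$-contour-shift subtlety you flagged, while your version is somewhat cleaner conceptually and gives the same bound without passing through $\psi'$. Your mollification workaround for the contour shift is the standard Paley--Wiener device and is adequate.
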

\vskip.05in
\begin{proof}(i) We take the norm to be
\begin{equation}\Vert f\Vert_{{\mathcal C}_2}=\sup_z\Bigl\{ \vert f(z)\vert :z=x+iy; y\in {\mathbb R},x\in  (-\varepsilon ,\varepsilon )\Bigr\} +\sup_{-\varepsilon/2<x<\varepsilon /2}\Bigl( \int_{-\infty}^\infty \vert f(x+iy)\vert^2 \, dy\Bigr)^{1/2}.\end{equation}
\noindent  Evidently ${\mathcal C}_2$ is a subspace of the Banach algebra $H^\infty$ of bounded functions on the strip $\{ z: \vert \Im z\vert <\varepsilon \}$, hence ${\mathcal C}_2$ is an integral domain. \par

\indent (ii) The Hankel integral operator with kernel $\phi (s+t)$ on $L^2(0, \infty )$ has Hilbert--Schmidt norm satisfing 
\begin{align}\Vert \Gamma (\phi )\Vert_{{\mathcal L}^2}^2&=\int_0^\infty t\vert\phi (t)\vert^2\, dt\nonumber\\
&\leq \int_0^\infty (1+t^2)\vert\phi (t)\vert^2\, dt\nonumber\\
&=\int_{-\infty}^\infty \bigl\vert  \psi (i\xi )\bigr\vert^2 \, d\xi+\int_{-\infty}^\infty \bigl\vert \psi '(i\xi )\bigr\vert^2 \, d\xi,\end{align}
\noindent where we have used Plancherel's formula. By Cauchy's integral formula for derivatives, we have
\begin{align}\int_{-\infty}^\infty \bigl\vert \psi '(i\xi )\bigr\vert^2 \, d\xi&\leq {\frac{1}{\pi\varepsilon}}\int_0^{2\pi}\int_{-\infty}^\infty \bigl\vert \psi (i\xi +\varepsilon e^{i\theta}/2)\bigr\vert^2 \, d\xi d\theta\nonumber\\
&\leq {\frac{2}{\varepsilon}}\sup_{\vert \eta \vert <\varepsilon /2} \int_{-\infty}^\infty \vert \psi (i\xi +\eta )\vert^2 d\xi.\end{align}
\noindent  Hence $\Gamma (\check f)$ is a Hilbert--Schmidt operator.\par
\end{proof}
\indent Employing more classical language, Titchmarsh [30] identified a subgroup of $G({\mathcal C}_2)/\exp ({\mathcal C}_2)$ with ${\mathbb Z}$.  Let $\psi$ be typical element of ${\mathcal C}_2$ such that $\psi (z)\rightarrow 1$ as $z\rightarrow \pm \infty$ along the imaginary axis and such that $\psi$ has no zeros on the imaginary axis. The function $\vert \log\psi (\eta+i\xi )\vert$ is square integrable for $-\varepsilon/2 \leq \eta\leq \varepsilon/2$. Then $\psi$  has the form
\begin{equation} \psi (z) =\Bigl({\frac{z-1}{z+1}}\Bigr)^k {\frac{\prod _{j=1}^m (z-w_j)} {(z^2-1)^{m/2}}} \exp \bigl(\chi_+(z)-\chi_-(z)\bigr)\end{equation}
\noindent where (1) $w_j$ are the zeros of $\psi (z) $ for $\vert \Re z\vert \leq \varepsilon /2$,\par
\indent (2)  $k$ is the winding number of the contour $\{ \psi (i\xi ): -\infty \leq \xi \leq \infty \}$,\par
\indent (3) $\chi_+$ is holomorphic and bounded on $\Re z\geq -\varepsilon/2$ and\par
\begin{equation}\chi_+(w)={\frac{1}{2\pi i}}\int_{-i\infty -\varepsilon /2}^{i\infty -\varepsilon /2}{\frac{\log \psi (z)}{z-w}}dz,\end{equation}
\indent (4) $\chi_-$ is holomorphic and bounded on  $\Re z\leq \varepsilon/2$ with \par
\begin{equation}\chi_-(w)={\frac{1}{2\pi i}}\int_{-i\infty +\varepsilon /2}^{i\infty +\varepsilon /2}{\frac{\log \psi (z)}{z-w}}dz.\end{equation}
See also the results of Rappaport from [27].\par

\indent The spaces $H^\infty (\{ s: \Re s<\varepsilon \})$ and $H^\infty (\{ s: -\varepsilon <\Re s\})$ have intersection ${\mathbb C}$ by Liouville's theorem, so $\chi_+$ and $\chi_-$ are unique up this additive constant. If $\psi\in G({\mathcal C}_2)$, then $\psi$ has no zeros and the middle factor is absent, but we are left with the initial factor incorporating the winding number.\par
\vskip.05in
 Let $Q$ be an orthogonal projection on  $L^2((0, \infty ); dx)$, and introduce the complementary spaces $H_+ ={\mathcal T}QL^2((0, \infty ); dx)$ and $H_- ={\mathcal T}(I-Q)L^2((0, \infty ); dx).$  \par
\indent For $g\in L^\infty$, let $M_g\in {\mathcal L}(L^2)$ be the multiplication operator $M_g:h\mapsto gh$. Then we introduce $W_g\in {\mathcal L}^\infty (H_+)$,  $\Gamma_g\in {\mathcal L}^\infty (H_+, H_-),$ , 
 $\tilde W_g\in {\mathcal L}^\infty (H_-)$,  $\tilde \Gamma_g\in {\mathcal L}^\infty (H_-, H_+),$ by
\begin{equation}M_g=\begin{bmatrix}W_g&\tilde \Gamma_g \\ \Gamma_g&\tilde W_g\end{bmatrix}\quad
\begin{matrix} H_+\\ H_- \end{matrix} .\end{equation}
\vskip.05in
\begin{lem}Let ${\mathcal C}_p$ be the space of $g\in L^\infty$ such that $\Gamma_g\in {\mathcal L}^p$ and $\tilde \Gamma_g\in{ \mathcal L}^p$, and let 
\begin{equation}\Vert g\Vert_{{\mathcal C}_p}=\max \{ \Vert W_g\Vert_{{\mathcal L}^\infty},  \Vert \tilde W_g\Vert_{{\mathcal L}^\infty}\} +\Vert \Gamma_g\Vert_{{\mathcal L}^p}+\Vert \tilde\Gamma_g\Vert_{{\mathcal L}^p}.\end{equation}
\noindent Then ${\mathcal C}_p$ is a subalgebra of ${L}^\infty$ such that
\begin{equation}\Vert gh\Vert_{{\mathcal C}_p}\leq \Vert g\Vert_{{\mathcal C}_p}\Vert h\Vert_{{\mathcal C}_p}.\end{equation}\end{lem}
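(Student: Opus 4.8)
The plan rests on the fact that $g\mapsto M_g$ is a unital algebra homomorphism from $L^\infty$ into ${\mathcal L}^\infty(L^2((0,\infty);dx))$, so that $M_{gh}=M_gM_h$; expanding the $2\times2$ block forms of $M_g$ and $M_h$ with respect to the decomposition $H_+\oplus H_-$ and reading off the entries gives
\begin{align*}
W_{gh}&=W_gW_h+\tilde\Gamma_g\Gamma_h, & \tilde W_{gh}&=\Gamma_g\tilde\Gamma_h+\tilde W_g\tilde W_h,\\
\Gamma_{gh}&=\Gamma_gW_h+\tilde W_g\Gamma_h, & \tilde\Gamma_{gh}&=W_g\tilde\Gamma_h+\tilde\Gamma_g\tilde W_h.
\end{align*}
Linearity of $g\mapsto M_g$, and hence of $g\mapsto\Gamma_g$ and $g\mapsto\tilde\Gamma_g$, shows at once that ${\mathcal C}_p$ is a linear subspace of $L^\infty$, that it contains the constants (for which $\Gamma_g=\tilde\Gamma_g=0$), and, since $\Vert\cdot\Vert_{{\mathcal L}^p}$ is a norm and $M_g=0$ forces $g=0$ almost everywhere, that $\Vert\cdot\Vert_{{\mathcal C}_p}$ is a norm.

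The next step is to invoke the two standard properties of the Schatten ideals: for $T\in{\mathcal L}^p$ and $S\in{\mathcal L}^\infty$ one has $ST,TS\in{\mathcal L}^p$ with $\Vert ST\Vert_{{\mathcal L}^p}\le\Vert S\Vert_{{\mathcal L}^\infty}\Vert T\Vert_{{\mathcal L}^p}$ and $\Vert TS\Vert_{{\mathcal L}^p}\le\Vert T\Vert_{{\mathcal L}^p}\Vert S\Vert_{{\mathcal L}^\infty}$, and, for $p\ge1$, $\Vert T\Vert_{{\mathcal L}^\infty}\le\Vert T\Vert_{{\mathcal L}^p}$. Applied to the four identities above, the first property shows that $\Gamma_{gh}$ and $\tilde\Gamma_{gh}$ lie in ${\mathcal L}^p$, so that $gh\in{\mathcal C}_p$; combined with the second it yields, for instance,
$$\Vert W_{gh}\Vert_{{\mathcal L}^\infty}\le\Vert W_g\Vert_{{\mathcal L}^\infty}\Vert W_h\Vert_{{\mathcal L}^\infty}+\Vert\tilde\Gamma_g\Vert_{{\mathcal L}^p}\Vert\Gamma_h\Vert_{{\mathcal L}^p},$$
$$\Vert\Gamma_{gh}\Vert_{{\mathcal L}^p}\le\Vert\Gamma_g\Vert_{{\mathcal L}^p}\Vert W_h\Vert_{{\mathcal L}^\infty}+\Vert\tilde W_g\Vert_{{\mathcal L}^\infty}\Vert\Gamma_h\Vert_{{\mathcal L}^p},$$
and the evident analogues for $\tilde W_{gh}$ and $\tilde\Gamma_{gh}$. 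The one point deserving care is that an off-diagonal (Hankel-type) block of one factor feeds into a diagonal (Wiener--Hopf-type) block of the product, which is exactly why one needs the Schatten $p$-norm to dominate the operator norm; were the symbols self-adjoint this would be automatic, but in general this domination is the one slightly non-formal ingredient.

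Finally I would assemble the arithmetic. Writing $A=\max\{\Vert W_g\Vert_{{\mathcal L}^\infty},\Vert\tilde W_g\Vert_{{\mathcal L}^\infty}\}$, $C=\max\{\Vert W_h\Vert_{{\mathcal L}^\infty},\Vert\tilde W_h\Vert_{{\mathcal L}^\infty}\}$, $b=\Vert\Gamma_g\Vert_{{\mathcal L}^p}$, $\tilde b=\Vert\tilde\Gamma_g\Vert_{{\mathcal L}^p}$, $d=\Vert\Gamma_h\Vert_{{\mathcal L}^p}$ and $\tilde d=\Vert\tilde\Gamma_h\Vert_{{\mathcal L}^p}$, the four estimates and the trivial bounds $\Vert W_g\Vert_{{\mathcal L}^\infty}\le A$, $\Vert\tilde W_h\Vert_{{\mathcal L}^\infty}\le C$, and so on, give
$$\Vert gh\Vert_{{\mathcal C}_p}\le AC+bC+\tilde bC+Ad+A\tilde d+b\tilde d+\tilde bd\le(A+b+\tilde b)(C+d+\tilde d)=\Vert g\Vert_{{\mathcal C}_p}\,\Vert h\Vert_{{\mathcal C}_p},$$
the last inequality having slack $bd+\tilde b\tilde d\ge0$. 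There is no genuine obstacle in this lemma: its whole content is the block identity $M_{gh}=M_gM_h$ and the ideal property of ${\mathcal L}^p$, and the only thing to keep straight is the bookkeeping around the $\max$ in the definition of $\Vert\cdot\Vert_{{\mathcal C}_p}$.
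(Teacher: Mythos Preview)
Your proof is correct and follows essentially the same route as the paper: both expand the block identity $M_{gh}=M_gM_h$ to obtain the four formulas for $W_{gh},\tilde W_{gh},\Gamma_{gh},\tilde\Gamma_{gh}$ and then apply the ideal property of the Schatten classes together with $\Vert\cdot\Vert_{{\mathcal L}^\infty}\le\Vert\cdot\Vert_{{\mathcal L}^p}$. If anything, your version is more thorough, since you carry out the final bookkeeping with the $\max$ explicitly and identify the slack $bd+\tilde b\tilde d$, whereas the paper merely says the submultiplicative inequality follows from ``similar inequalities for each entry.''
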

\vskip.05in
\begin{proof} For $g\in L^\infty$ we have $M_g\in {\mathcal L}^\infty$, and $\Vert g\Vert_{L^\infty}\leq \Vert M_g\Vert_{{\mathcal L}^\infty}\leq \Vert g\Vert_{{\mathcal C}_p}$, so the pointwise multiplication is unambiguously defined. 
Conversely, suppose that $g,h\in {\mathcal C}_p$, and observe that 
\begin{equation}\begin{bmatrix}W_{gh}&\tilde \Gamma_{gh} \\ \Gamma_{gh}&\tilde W_{gh}\end{bmatrix}=\begin{bmatrix}W_g&\tilde \Gamma_g \\ \Gamma_g&\tilde W_g\end{bmatrix}\begin{bmatrix}W_h&\tilde \Gamma_h \\ \Gamma_h&\tilde W_h\end{bmatrix}=\begin{bmatrix}W_gW_h+\tilde\Gamma_g\Gamma_h&W_g\tilde \Gamma_h +\tilde \Gamma_g \tilde W_h\\ \Gamma_g W_h+\tilde W_g\Gamma_h&\tilde W_g\tilde W_h+\Gamma_g\tilde \Gamma_h\end{bmatrix}\end{equation}
\noindent leading to identities such as 
\begin{equation} W_{gh}=W_gW_h+\tilde \Gamma_g\Gamma_h\end{equation}
\begin{equation}\Gamma_{gh}=\Gamma_gW_h+\tilde W_g\Gamma_h.\end{equation}
\noindent The ideal property of the Schatten norm gives 
\begin{equation}\Vert W_{gh}\Vert_{{\mathcal L}^\infty}\leq \Vert W_g\Vert_{{\mathcal L}^\infty}  \Vert W_h\Vert_{{\mathcal L}^\infty} + \Vert \tilde\Gamma_g\Vert_{{\mathcal L}^p} \Vert\Gamma_h\Vert_{{\mathcal L}^p},\end{equation}
\begin{equation} \Vert \Gamma_{gh}\Vert_{{\mathcal L}^\infty}\leq \Vert \Gamma_g\Vert_{{\mathcal L}^p}  \Vert W_h\Vert_{{\mathcal L}^\infty} + \Vert \tilde W_g\Vert_{{\mathcal L}^\infty} \Vert\Gamma_h\Vert_{{\mathcal L}^p},\end{equation}
\noindent and similar inequalities for each entry of (), hence the norm satisfies the submultiplicative property.\par
\end{proof}
\vskip.05in
\indent Let ${\mathcal A}_2$ be the subalgebra of ${\mathcal C}_2$ consisting of $f\in {\mathcal C}_2$ such that $f$ is bounded and holomorphic on the right half plane, and let ${\mathcal A}^*_2$ be the subalgebra of ${\mathcal C}_2$ consisting of $f\in {\mathcal C}_2$  such that $f(z)=\bar g(-\bar z)$ for some $g\in {\mathcal A}_2.$ Note that ${\mathcal A}^*_2\cap {\mathcal A}_2={\mathbb C}1$ by Liouville's theorem. The following result describes $\psi\in G({\mathcal A}^*_2)G({\mathcal A}_2)$ that has no imaginary zeros, but may have zeros elsewhere. For ${\mathcal G}$ a group, we write $\{X,Y\}=XYX^{-1}Y^{-1}$ for the multiplicative commutator. \par
\vskip.05in
\begin{lem}\label{Wienerhopf} (1)Suppose that $\psi\in {\mathcal C}_2$ has no zeros on the imaginary axis,\par
(2)  $\psi (i\xi +\eta )\rightarrow 1$ as $\xi\rightarrow\pm\infty$, uniformly for $-\varepsilon <\eta <\varepsilon$,\par
(3) the winding number of $\{ \psi (i\xi ):-\infty \leq \xi\leq \infty \}$ is zero, and\par
(4) $\psi (z)=1+O(1/\vert z\vert^{1/2+\delta })$ as $\vert z\vert\rightarrow\infty$ for some $0<\delta\leq 1/2$.\par  
\noindent Then there exists $0<\varepsilon'<\varepsilon$ such that  $\psi$ has a Wiener--Hopf factorization 
\begin{equation}\label{wienerhopf}\psi =\psi_-\psi_+\qquad (-\varepsilon'<\Re z<\varepsilon')\end{equation}  such that \par
\indent (i) $\psi_{-}$ is bounded, holomorphic and free from zeros on $\{ z: \Re z<\varepsilon'/2\}$\par
\indent (ii) $\psi_+$ is bounded, holomorphic and free from zeros on $\{ z: \Re z>-\varepsilon'/2 \}$;\par
\indent (iii) $\psi_{\pm} (\eta +i\xi )=1+O(1/\vert z\vert^{(1+\delta )/2})$ as $\xi\rightarrow\pm\infty, $ uniformly for $-\varepsilon'/\varepsilon <\eta<\varepsilon'/\varepsilon$.\end{lem}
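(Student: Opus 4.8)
The plan is to realise $\psi_{\pm}$ as exponentials of the two one-sided Cauchy integrals of $\log\psi$ --- essentially the Titchmarsh decomposition recalled above, specialised to the case where the winding-number factor and the rational middle factor are both absent --- and then to read the decay rate (iii) off those integrals.

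First I would fix $\varepsilon'$ and a branch of the logarithm. Since $\psi\in{\mathcal C}_2$ is analytic and not identically zero on $\{|\Re z|<\varepsilon\}$, its zeros are isolated; and by hypothesis (2) there is $R$ with $|\psi(\eta+i\xi)-1|<1/2$ whenever $|\xi|\ge R$ and $|\eta|\le\varepsilon/2$, so every zero of $\psi$ in the closed strip $\{|\Re z|\le\varepsilon/2\}$ lies in the compact rectangle $\{|\Re z|\le\varepsilon/2,\ |\Im z|\le R\}$. There are thus finitely many such zeros, and none on $i{\mathbb R}$ by hypothesis (1), so I can choose $0<\varepsilon'<\varepsilon/2$ with $\psi$ zero-free on $\{|\Re z|\le 2\varepsilon'\}$. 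On that simply connected zero-free strip a single-valued analytic branch of $\log\psi$ exists, and hypotheses (2)--(3) let me normalise it so that $\log\psi(\eta+i\xi)\to 0$ as $\xi\to\pm\infty$ uniformly for $|\eta|\le 2\varepsilon'$: the winding number being zero is exactly what forces the limits at $+i\infty$ and at $-i\infty$ to coincide, so a single constant in $2\pi i{\mathbb Z}$ serves. By hypothesis (4), $|\log\psi(z)|\le C(1+|z|)^{-1/2-\delta}$ on this strip, so its restriction to any vertical line there lies in $L^2$.

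Next I would set $\chi_+(w)=\frac{1}{2\pi i}\int_{\Re z=-\varepsilon'}\frac{\log\psi(z)}{z-w}\,dz$ and $\chi_-(w)=\frac{1}{2\pi i}\int_{\Re z=\varepsilon'}\frac{\log\psi(z)}{z-w}\,dz$, and define $\psi_+=\exp(-\chi_+)$ and $\psi_-=\exp(\chi_-)$. By Cauchy--Schwarz and the $L^2$ bound just obtained, $\chi_+$ is holomorphic on $\{\Re w>-\varepsilon'\}$ and bounded on $\{\Re w\ge-\varepsilon'/2\}$, and symmetrically $\chi_-$ is holomorphic on $\{\Re w<\varepsilon'\}$ and bounded on $\{\Re w\le\varepsilon'/2\}$; hence $\psi_+$ is bounded, holomorphic and zero-free on $\{\Re z>-\varepsilon'/2\}$ and $\psi_-$ is bounded, holomorphic and zero-free on $\{\Re z<\varepsilon'/2\}$, which is (i)--(ii). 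For the factorisation itself I would apply the residue theorem to $z\mapsto \log\psi(z)/(z-w)$, for $w$ with $-\varepsilon'<\Re w<\varepsilon'$, over the boundary of $[-\varepsilon',\varepsilon']\times[-T,T]$: the horizontal sides contribute $o(1)$ as $T\to\infty$ because $\log\psi\to 0$ there and the denominator is bounded below, while the only enclosed pole, at $z=w$, has residue $\log\psi(w)$; tracking orientations this yields the Plemelj relation $\log\psi(w)=\chi_-(w)-\chi_+(w)$ throughout $\{-\varepsilon'<\Re z<\varepsilon'\}$, so $\psi=\psi_-\psi_+$ there, which is (\ref{wienerhopf}).

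The step that needs genuine care is (iii). Since $\chi_\pm\to 0$ at $\pm i\infty$ we have $\psi_\pm-1=O(|\chi_\pm|)$, so it suffices to bound $\chi_+$ (the case of $\chi_-$ being identical). Writing $w=\eta+i\tau$ with $\Re w\ge-\varepsilon'/2$ and using $|(-\varepsilon'+i\xi)-w|\ge\max(\varepsilon'/2,|\xi-\tau|)$ together with $|\log\psi(-\varepsilon'+i\xi)|\le C(1+|\xi|)^{-1/2-\delta}$, the problem reduces to bounding $\int_{\mathbb R}(1+|\xi|)^{-1/2-\delta}\max(\varepsilon'/2,|\xi-\tau|)^{-1}\,d\xi$; splitting this over the ranges $|\xi-\tau|\le\varepsilon'/2$, $|\xi|\le|\tau|/2$, $|\tau|/2\le|\xi|\le 2|\tau|$ and $|\xi|\ge 2|\tau|$, a routine computation gives $O(|\tau|^{-1/2-\delta}\log|\tau|)$ as $\tau\to\pm\infty$, uniformly for $|\eta|\le\varepsilon'/2$, and since $\delta>0$ this is $O(|\tau|^{-(1+\delta)/2})$. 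I expect precisely this estimate --- and in particular the logarithmic loss from the range $|\xi|\sim|\tau|$, which is why (iii) is stated with the exponent $(1+\delta)/2$ rather than the sharper $1/2+\delta$ --- to be the only real obstacle; everything else is standard Wiener--Hopf / Riemann--Hilbert bookkeeping.
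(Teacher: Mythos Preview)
Your proposal is correct and follows essentially the same route as the paper: choose $\varepsilon'$ so that $\psi$ is zero-free on the narrower strip, take the two one-sided Cauchy integrals of $\log\psi$ along $\Re z=\pm\varepsilon'$, and define $\psi_\pm$ as their exponentials. The only substantive difference is in the decay estimate (iii): the paper splits the integral for $\chi_-$ at $\xi_0=|z|/2$ and applies H\"older's inequality with an exponent $p>2/\delta$ to obtain $O(|z|^{-(1-1/p)})+O(|z|^{-(1/2+\delta-1/p)})$, both of which are $O(|z|^{-(1+\delta)/2})$; your four-region dyadic splitting gives the slightly sharper $O(|\tau|^{-1/2-\delta}\log|\tau|)$ directly, which of course also dominates $|\tau|^{-(1+\delta)/2}$. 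Your treatment is in fact a bit more careful than the paper's in two respects --- you explain how hypothesis (3) is used to pin down a single-valued branch of $\log\psi$, and you verify the Plemelj relation $\log\psi=\chi_--\chi_+$ via the rectangular contour rather than simply asserting it --- and your sign convention $\psi_+=\exp(-\chi_+)$, $\psi_-=\exp(\chi_-)$ is the one that actually yields $\psi_-\psi_+=\psi$ (the paper's displayed choice $\psi_-=\exp(-\chi_-)$, $\psi_+=\exp(\chi_+)$ appears to carry a sign slip).
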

\begin{proof} (i), (ii) By hypothesis, $\psi$ has no zeros lie on the imaginary axis, and only finitely many in the strip $\{ z: -\varepsilon <\Re z<\varepsilon\}$; so by choosing $0<\varepsilon'<\varepsilon$ sufficiently small, we can ensure that $\psi $ is free from zeros $\{ z: -\varepsilon \leq\Re z\leq \varepsilon\}$. Then we choose
\begin{equation}\chi_-(z)=\int_{-i\infty +\varepsilon' }^{i\infty +\varepsilon}{\frac{\log \psi (z)}{z-w}}dz,\end{equation}
\begin{equation}\chi_+(z)={\frac{1}{2\pi i}}\int_{-i\infty -\varepsilon' }^{i\infty -\varepsilon' }{\frac{\log \psi (z)}{z-w}}dz;\end{equation}
\noindent then the functions $\psi_-(z)=\exp (-\chi_-(z))$ and $\psi_+(z)=\exp (\chi_+(z))$  satisfy $\psi_-\psi_+=\psi$, as in (\ref{wienerhopf}). Also, we can introduce $R>0$ such that 
\begin{equation}\sup\{ \vert \psi (\eta +i\xi )-1\vert : -\varepsilon' <\eta <\varepsilon', \xi \in (-\infty ,-R)\cup (R, \infty )\}<1/2\end{equation}
\noindent and $\psi (z)$ is free from zeros on $\{ z=\eta+i\xi : -\varepsilon' <\eta <\varepsilon', \xi \in [-R,R]\}$. Then one can introduce $M$ such that
\begin{equation}\bigl\vert\log \psi (z)\bigr\vert\leq {\frac{M}{(1+\vert\xi\vert )^{1/2+\delta}}}\qquad (z=\eta +i\xi ; -\infty <\xi<\infty, \quad -\varepsilon' <\eta <\varepsilon' ).\end{equation}
\noindent The convolution of a pair of $L^2$ functions gives a continuous function which vanishes at infinity, so $\chi_\pm$ are bounded and holomorphic on the smaller half planes determined by abscissae $\pm\varepsilon'/2$.\par
\indent (iii) To obtain the more precise estimate of (iii), we consider $z$ with $-\varepsilon'/2\leq \Re z\leq \varepsilon'/2$ and $\Im z>0$ large; then we take $\xi_0=\vert z\vert/2$ and $p>2/\delta$ with conjugate $q=p/(p-1)$ and split the integral
\begin{align}\vert\chi_-(z)\vert&\leq \int_{-\infty}^\infty {\frac{\vert\log \psi (\varepsilon'+i\xi )\vert}{\vert z-\varepsilon'-i\xi\vert }}d\xi\nonumber\\
&=\int_{-\infty}^{\xi_0}+\int_{\xi_0}^\infty  {\frac{\vert\log \psi (\varepsilon'+i\xi )\vert}{\vert z-\varepsilon'-i\xi\vert }}d\xi\cr
&\leq \Bigl(\int_{-\infty}^{\xi_0}{\frac{M^q}{(1+\vert\xi\vert )^{q/2+q\delta}}}d\xi\Bigr)^{1/q}\Bigl( \int_{-\infty}^{\xi_0}{\frac{d\xi}{\vert z-\varepsilon'-i\xi\vert^p}}\Bigr)^{1/p}\nonumber\\
&\quad +
\Bigl(\int_{\xi_0}^\infty {\frac{M^p}{(1+\vert\xi\vert )^{p/2+p\delta}}}d\xi\Bigr)^{1/p}\Bigl( \int_{\xi_0}^\infty{\frac{d\xi}{\vert z-\varepsilon'-i\xi\vert^q}}\Bigr)^{1/q}\nonumber\\
&=O(1/\vert z\vert^{1-1/p})+O(1/\vert z\vert^{1/2+\delta -1/p}),\end{align}
\noindent where we have used H\"older's inequality on the integrals. The other estimates in (iii) are similar. Likewise, one can show that $\chi'_{\pm} (z)=O(1/\vert z\vert^{(1+\delta )/2})$ as $\xi\rightarrow\pm\infty).$\par
\end{proof}
\vskip.05in 
\end{section} 
\begin{section}{Wiener--Hopf determinant}\par
\vskip.05in
\noindent This section contains the main theoretical result, as follows.\par
 
\begin{thm}\label{WHfactors}Suppose that $\psi\in L^\infty (i{\mathbb R})$ has a Wiener--Hopf factorization $\psi=\psi_-\psi_+$ as in Lemma \ref{Wienerhopf}. Then there exists a $2\times 2$ scattering function
\begin{equation}\Phi (x)= \begin{bmatrix}0&\phi_1(x)\\ \phi_2(x)& 0\end{bmatrix}\end{equation}
\noindent such that  Hankel operators integral operators $\Gamma_{\phi_1}$ and $\Gamma_{\phi_2}$ are Hilbert--Schmidt on $L^2(0, \infty )$ and
\begin{equation}1/\det (W(\psi )W(\psi^{-1} )) =\det (I-\Gamma_{\phi_1}\Gamma_{\phi_2})=\det_2 (I+\Gamma_{\Phi}).\end{equation} 
There are three particular cases that arise under the following hypotheses:\par 
\indent (i) $\psi_-(i\xi )/\bar \psi_- (-i\xi ) =\psi_+(i\xi )/\bar \psi_+ (-i\xi )$ if and only if $\phi_1$ and $\phi_2$ are real, so that $\Gamma_{\phi_1}$ and $\Gamma_{\phi_2}$ are self-adjoint; \par
(ii) $\psi_-(i\xi )\psi_-(-i\xi )=\psi_+(i\xi )\psi_+(-i\xi ),$ if and only if $\phi_1=\phi_2$, in which case the Carleman determinants satisfy\par
\begin{equation}\det_2 (I+\lambda\Gamma_{\Phi}) 
=\det_2(I-\lambda\Gamma_{\phi_1})\det_2(I+\lambda\Gamma_{\phi_1})\qquad (\lambda \in {\mathbb C});\end{equation}
(iii) $\vert\psi_-(i\xi )\vert =\vert\psi_+(i\xi )\vert$ if and only if the operator $\Gamma_\Phi$ is self-adjoint.\par
\noindent Any pair of these conditions implies the other one.\end{thm}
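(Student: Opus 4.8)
The plan is to transfer everything to the Hardy space $H^2({\mathbb C}_+)$ via the Laplace transform of Section 2, so that $W(\psi)$ becomes the compression $P_+M_\psi$ and, through the dictionary relating scattering functions to Nehari symbols (together with the involution $J$), $\Gamma_{\phi_1}$ and $\Gamma_{\phi_2}$ become the Hankel operators attached to $\psi_-/\psi_+$ and to its reflection $\psi_+(-i\xi)/\psi_-(-i\xi)$. First I would check Hilbert--Schmidtness: by Lemma \ref{Wienerhopf}(iii), $\psi_\pm(z)=1+O(|z|^{-(1+\delta)/2})$ uniformly on vertical lines of the strip, so $\psi_-/\psi_+-1$ and $\psi_+/\psi_--1$ are bounded and holomorphic on a slightly narrower strip, tend to $0$ at $\pm i\infty$, and are square-integrable on vertical lines; hence they lie in ${\mathcal C}_2^0$, and Proposition \ref{Hom}(ii) gives that $\Gamma_{\phi_1},\Gamma_{\phi_2}$ are Hilbert--Schmidt. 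Consequently $\Gamma_\Phi$ is Hilbert--Schmidt, $\Gamma_{\phi_1}\Gamma_{\phi_2}$ is trace class, and all three determinants are defined. The identity $\det(I-\Gamma_{\phi_1}\Gamma_{\phi_2})=\det_2(I+\Gamma_\Phi)$ I would read off spectrally: the unitary $\left(\begin{smallmatrix} I & 0\\ 0 & -I\end{smallmatrix}\right)$ conjugates $\Gamma_\Phi$ to $-\Gamma_\Phi$, so its spectrum is symmetric under $\lambda\mapsto-\lambda$; pairing $\lambda$ with $-\lambda$ turns each $(1+\lambda)e^{-\lambda}(1-\lambda)e^{\lambda}$ into $1-\lambda^2$, and since $\Gamma_\Phi^2$ is the block diagonal operator with entries $\Gamma_{\phi_1}\Gamma_{\phi_2}$ and $\Gamma_{\phi_2}\Gamma_{\phi_1}$, the numbers $\lambda^2$ run, with the correct multiplicities, through the spectrum of $\Gamma_{\phi_1}\Gamma_{\phi_2}$, giving $\det_2(I+\Gamma_\Phi)=\prod(1-\lambda^2)=\det(I-\Gamma_{\phi_1}\Gamma_{\phi_2})$.

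For the main identity I would use the multiplication rules of Section 4 (the identities $W_{gh}=W_gW_h+\tilde\Gamma_g\Gamma_h$, $\Gamma_{gh}=\Gamma_gW_h+\tilde W_g\Gamma_h$, and their $\tilde\Gamma,\tilde W$ analogues), together with the vanishing $\Gamma_{\psi_+^{\pm 1}}=0$ and $\tilde\Gamma_{\psi_-^{\pm 1}}=0$ forced by analyticity and zero-freeness of $\psi_\pm$ on the respective half-planes. Since the winding number is zero, $W(\psi)=W(\psi_-)W(\psi_+)$ and $W(\psi^{-1})=W(\psi_-^{-1})W(\psi_+^{-1})$ are invertible, with $W(\psi)^{-1}=W(\psi_+^{-1})W(\psi_-^{-1})$ and $W(\psi^{-1})^{-1}=W(\psi_+)W(\psi_-)$; hence $\bigl(W(\psi)W(\psi^{-1})\bigr)^{-1}=W(\psi_+)W(\psi_-)W(\psi_+^{-1})W(\psi_-^{-1})$. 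The commutation identity $W(\psi_-)W(\psi_+^{-1})=W(\psi_-/\psi_+)=W(\psi_+^{-1})W(\psi_-)+\tilde\Gamma_{\psi_+^{-1}}\Gamma_{\psi_-}$, combined with $W(\psi_\pm)W(\psi_\pm^{-1})=I$, collapses this to $I+W(\psi_+)\tilde\Gamma_{\psi_+^{-1}}\Gamma_{\psi_-}W(\psi_-^{-1})$; applying $W(\psi_+)\tilde\Gamma_{\psi_+^{-1}}=-\tilde\Gamma_{\psi_+}\tilde W_{\psi_+^{-1}}$, $\Gamma_{\psi_-}W(\psi_-^{-1})=-\tilde W_{\psi_-}\Gamma_{\psi_-^{-1}}$ and $\tilde W_{\psi_+^{-1}}\tilde W_{\psi_-}=\tilde W_{\psi_-/\psi_+}$ then gives $\bigl(W(\psi)W(\psi^{-1})\bigr)^{-1}=I+\tilde\Gamma_{\psi_+}\tilde W_{\psi_-/\psi_+}\Gamma_{\psi_-^{-1}}$. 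Taking the determinant, permuting cyclically, and using $\tilde W_{\psi_-/\psi_+}\Gamma_{\psi_-^{-1}}=-\Gamma_{\psi_-/\psi_+}W_{\psi_-^{-1}}$ (as $\Gamma_{\psi_+^{-1}}=0$) and $W_{\psi_-^{-1}}\tilde\Gamma_{\psi_+}=\tilde\Gamma_{\psi_+/\psi_-}$, one arrives at $1/\det\bigl(W(\psi)W(\psi^{-1})\bigr)=\det\bigl(I-\Gamma_{\psi_-/\psi_+}\tilde\Gamma_{\psi_+/\psi_-}\bigr)$; the Section-2 identifications then turn the right side into $\det(I-\Gamma_{\phi_1}\Gamma_{\phi_2})$, which by the previous paragraph equals $\det_2(I+\Gamma_\Phi)$. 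I expect this final identification to be the main obstacle: one must verify that, under the unitary dictionary of Section 2, $\Gamma_{\psi_-/\psi_+}$ and $\tilde\Gamma_{\psi_+/\psi_-}$ are exactly $\Gamma_{\phi_1}$ and $\Gamma_{\phi_2}$ (with the reflection $\xi\mapsto-\xi$ correctly built into $\phi_2$, and no spurious adjoint), and one must keep track of which intermediate operators are trace class so that every cyclic rearrangement of the determinant is legitimate.

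For the three special cases, set $g(\xi)=\psi_-(i\xi)/\psi_+(i\xi)$, so that $\phi_1(x)=\int_{-\infty}^\infty(g(\xi)-1)e^{-i\xi x}\,d\xi$ and $\phi_2(x)=\int_{-\infty}^\infty\bigl(g(-\xi)^{-1}-1\bigr)e^{-i\xi x}\,d\xi$. Conjugating and substituting $\xi\mapsto-\xi$ in the first integral gives $\overline{\phi_1(x)}=\int_{-\infty}^\infty(\overline{g(-\xi)}-1)e^{-i\xi x}\,d\xi$, whence $\phi_1$ is real $\iff\phi_2$ is real $\iff g(\xi)=\overline{g(-\xi)}$, and rearranging this equality of ratios is precisely condition (i); a Hankel integral operator with real scattering function has a real symmetric kernel, hence is self-adjoint, which gives the operator consequence. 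Likewise $\phi_1=\phi_2\iff g(\xi)g(-\xi)=1$, which rearranges to condition (ii), and then with $\phi_1=\phi_2=:\phi$ the unitary $\frac1{\sqrt 2}\left(\begin{smallmatrix} I & I\\ I & -I\end{smallmatrix}\right)$ conjugates $\Gamma_\Phi$ to the block diagonal operator with entries $\Gamma_\phi$ and $-\Gamma_\phi$, so $\det_2(I+\lambda\Gamma_\Phi)=\det_2(I-\lambda\Gamma_\phi)\det_2(I+\lambda\Gamma_\phi)$. Finally $\Gamma_\Phi=\Gamma_\Phi^\dagger\iff\Gamma_{\phi_1}=\Gamma_{\phi_2}^\dagger\iff\overline{\phi_1}=\phi_2\iff\overline{g(-\xi)}=g(-\xi)^{-1}$, i.e.\ $|g(\xi)|=1$, which rearranges to condition (iii). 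Thus in terms of $g$ the three conditions read $g(\xi)=\overline{g(-\xi)}$, $\;g(\xi)g(-\xi)=1$, $\;g(\xi)\overline{g(\xi)}=1$; any two of these three relations among $g(\xi),g(-\xi),\overline{g(\xi)},\overline{g(-\xi)}$ visibly force the third by substitution, which is the closing assertion.
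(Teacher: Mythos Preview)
Your proposal is correct and follows essentially the same strategy as the paper: membership of $\psi_-/\psi_+-1$ and $\psi_+/\psi_--1$ in ${\mathcal C}_2^0$ via Lemma~\ref{Wienerhopf}(iii), Hilbert--Schmidtness from Proposition~\ref{Hom}(ii), the Wiener--Hopf/Hankel algebra identities of Section~4, and Fourier uniqueness for the three special cases. The differences are in execution rather than idea.

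For the main determinant identity the paper is more economical: it applies the single identity $W(fg)-W(f)W(g)=\tilde\Gamma_f\Gamma_g$ once with $f=\psi_-/\psi_+$ and $g=\psi_+/\psi_-$, immediately obtaining $I-W(\psi_-/\psi_+)W(\psi_+/\psi_-)=\tilde\Gamma_f\Gamma_g$, and then identifies $W(\psi_-/\psi_+)W(\psi_+/\psi_-)$ with the multiplicative commutator $\{W(\psi_-),W(\psi_+)^{-1}\}=W(\psi_-)W(\psi)^{-1}W(\psi_+)$, whose determinant is $1/\det\bigl(W(\psi^{-1})W(\psi)\bigr)$. Your longer chain of substitutions reaches an equivalent endpoint, $\det(I-\Gamma_{\psi_-/\psi_+}\tilde\Gamma_{\psi_+/\psi_-})$, but note that this is $\Gamma\cdot\tilde\Gamma$ on $H_-$ rather than the paper's $\tilde\Gamma\cdot\Gamma$ on $H_+$; the two are \emph{not} cyclic permutations of one another, so the identification you flag as ``the main obstacle'' genuinely requires the involution $J$ (or an equivalent symmetry) to match up, whereas the paper's route lands directly on $\tilde\Gamma_f\Gamma_g$ and states explicitly that the reflection $\xi\mapsto-\xi$ in the definition of $\phi_2$ is exactly what converts $\Gamma(g)$ into a Hankel integral operator under the Fourier dictionary. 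For the identity $\det(I-\Gamma_{\phi_1}\Gamma_{\phi_2})=\det_2(I+\Gamma_\Phi)$ the paper uses the block-triangular factorization
\[
\begin{bmatrix}I&0\\0&I-UV\end{bmatrix}=\begin{bmatrix}I&0\\-V&I\end{bmatrix}\begin{bmatrix}I&U\\V&I\end{bmatrix}\begin{bmatrix}I&-U\\0&I\end{bmatrix}
\]
rather than your spectral-symmetry argument; both are short and standard.
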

\vskip.1in
\begin{proof} By the Lemma \ref{Wienerhopf}, we can choose $\varepsilon'>0$ such that
\begin{equation}f={\frac{\psi_-}{\psi_+}}-1,\qquad g= {\frac{\psi_+}{\psi_-}}-1\end{equation}
\noindent both belong to ${\mathcal C}_2={\mathcal C}_2(\varepsilon')$ and satisfy $fg=2-\psi_-/\psi_+-\psi_+/\psi_-$; hence 
\begin{align}\tilde \Gamma (f)\Gamma (g)&=W(fg)-W(f)W(g)\nonumber\\
&=I-W(\psi_-/\psi_+)W(\psi_+/\psi_-).\end{align}
\noindent Now  the operators $W(\psi_{\pm})$ are invertible, and $W(\rho\psi_+)=W(\rho )W(\psi_+)$ and $W(\psi_-\rho )=W(\psi_-)W(\rho)$ for all $\rho\in {\mathcal C}_2$. So we have
\begin{align}W(\psi_-/\psi_+)W(\psi_+/\psi_-)&=W(\psi_-)W(\psi_+)^{-1}W(\psi_+/\psi_-)\nonumber\\
&=W(\psi_-)W(\psi_+)^{-1}W(\psi_-)^{-1}W(\psi_+)\nonumber\\
&=\{ W(\psi_-), W(\psi_+)^{-1}\}.\end{align}
so taking the determinant of the inverse of the right-hand side
\begin{align}\det \{W(\psi_-),W(\psi_+)^{-1}\}=\det (I- \tilde \Gamma (f)\Gamma (g)).\end{align}
We also have
\begin{equation}W(\psi_-)W(\psi_+)^{-1}W(\psi_-)^{-1}W(\psi_+)=W(\psi_-)W(\psi_-\psi_+)^{-1}W(\psi_+)\end{equation}
so 
\begin{align}\det\bigl( W(\psi_-)W(\psi_+)^{-1}W(\psi_-)^{-1}W(\psi_+)\bigr)&=1/\det\bigl( W(\psi_+)^{-1}W(\psi_-\psi_+)W(\psi_-)^{-1}\bigr)\nonumber\\
&=1/\det\bigl( W(\psi_-)^{-1}W(\psi_+)^{-1}W(\psi_-\psi_+)\bigr)\nonumber\\
&=1/\det\bigl( W(\psi_-^{-1}\psi_+^{-1})W(\psi_-\psi_+)\bigr)\end{align}

\indent Taking the unitary conjugation by the Fourier transform, we have $\tilde \Gamma (f)\mapsto \Gamma_{\phi_1}$ and $\Gamma (g)\mapsto \Gamma_{\phi_2}$, where  
\begin{equation}\phi_1(x)= \int_{-\infty }^\infty \Bigl( {\frac{\psi_-(i\xi )}{\psi_+(i\xi )}}-1\Bigr)e^{-i\xi x}\, d\xi\end{equation}
\begin{equation}\phi_2(x)=\int_{-\infty }^\infty \Bigl( {\frac{\psi_+(-i\xi )}{\psi_-(-i\xi )}}-1\Bigr) e^{-i\xi x}\, d\xi;\end{equation}
\noindent the difference in signs $\pm\xi$ in the quotients reflecting the tilde on $\tilde \Gamma (f)$. \par
\indent Hence $\psi_-/\psi_+-1$ and  $\psi_+/\psi_--1$ belong to $L^2(i{\mathbb R})\cap L^\infty (i{\mathbb R})$ and determine bounded Hankel operators. We proceed to realise these via linear systems. Let $H=L^2(-\infty, \infty )$ and ${\mathcal D}(A)=\{ g\in H: \xi g(\xi )\in H\}$.  Then we introduce the linear systems $(-A,B_1,C)$ and $(-A,B_2, C)$ by 
\begin{align}\label{linsys2}A:{\mathcal D}(A)\rightarrow H: &\quad  g(\xi )\mapsto i\xi g(\xi)\qquad (g\in {\mathcal D}(A));\nonumber\\
B_1: {\mathbb C}\rightarrow H:&\quad \beta\mapsto \Bigl({\frac{\psi_-(i\xi )}{\psi_+ (i\xi )}}-1\Bigr) \beta \qquad (\beta\in {\mathbb C});\nonumber\\
B_2: {\mathbb C}\rightarrow H:&\quad \beta\mapsto \Bigl({\frac{\psi_+(-i\xi )}{\psi_- (-i\xi )}}-1\Bigr) \beta \qquad (\beta\in {\mathbb C});\nonumber\\
C: {\mathcal D}(A)\rightarrow {\mathbb C}: &\quad g\mapsto {\frac{1}{2\pi}}\int_{-\infty }^\infty g(\xi )\,d\xi \qquad (g\in {\mathcal D}(A)).\end{align}
\noindent Then $\phi_1 (t) =Ce^{-tA}B_1$ and $\phi_2(t) =Ce^{-tA}B_2$.  Also, $\Gamma_{\phi_1}$ and $\Gamma_{\phi_2}$ are Hilbert--Schmidt by Proposition \ref{Hom}. 
Hence $\Gamma_{\phi_1}\Gamma_{\phi_2}$ is a trace class operator, and $\det (I-\Gamma_{\phi_1}\Gamma_{\phi_2})$ is well defined.\par
\indent Suppose that the linear system $(-A,B_j,C)$ realises $\phi_j$. Then the matrix system
\begin{equation}\Bigl( \begin{bmatrix}-A&0\\ 0& -A\end{bmatrix} ,   \begin{bmatrix}B_1&0\\ 0& B_2\end{bmatrix} ,\begin{bmatrix}0&C\\ C& 0\end{bmatrix}\Bigr)\end{equation}
\noindent realises
\begin{equation}\Phi (x)= \begin{bmatrix}0&\phi_1(x)\\ \phi_2(x)& 0\end{bmatrix}\end{equation}
\noindent For finite matrices $U$ and $V$, we have
\begin{equation}\det \begin{bmatrix}I&0\\ 0&I-UV\end{bmatrix}=\det_2\begin{bmatrix}I&0\cr -V&I\end{bmatrix}\det_2\begin{bmatrix}I&U\\ V&I\end{bmatrix}\det_2\begin{bmatrix} I&-U\\ 0&I\end{bmatrix}\end{equation}
\noindent so by a simple approximation argument in Hilbert--Schmidt norm
\begin{equation}\det (I-\Gamma_{\phi_1}\Gamma_{\phi_2})=\det_2 (I+\Gamma_\Phi );\end{equation}
Hence 
\begin{equation}1/\det (W(\psi )W(\psi^{-1} )) =\det_2 (I+\Gamma_\Phi ).\end{equation}
\indent (i) Now by uniqueness of the Fourier transform, $\phi_1$ is real if and only if $\psi_-/\psi_+(i\xi )=\bar \psi_- (-i\xi )/\bar \psi_+ (-i\xi )$.\par
\indent (ii) Likewise $\phi_1(x)=\phi_2(x)$ if and only if $\psi_-(i\xi )/\psi_+(i\xi )=\psi_+(-i\xi )/\psi_-(-i\xi )$, which reduces to the stated condition. If $\phi_1=\phi_2$, then 
\begin{equation}\det (I+\lambda\Gamma_{\Phi}) =\det (I-\lambda^2\Gamma^2_{\phi_1})=\det_2(I-\lambda\Gamma_{\phi_1})\det_2(I+\lambda\Gamma_{\phi_1})\end{equation}
\noindent is determined by the spectrum of the scalar-valued Hankel operator $\Gamma_{\phi_1}$. The nature of the spectrum is determined in [20, 24].\par
\indent (iii) Evidently $\Gamma_\Phi$ is self-adjoint, if and only if $\bar\phi_1(x)=\phi_2(x)$; that is $\bar\psi_-(-i\xi )/\bar\psi_+(-i\xi) =\psi_+(-i\xi )/\psi_- (-i\xi )$. \par
 Finally, one considers the cases (i), (ii) and (iii) as they apply to $\begin{bmatrix}0&U\cr V&0\end{bmatrix}.$
\end{proof}
\noindent As in Corollary \ref{finitedet}, we can reduce the Fredholm determinant of Hankel operators to related determinants.  Let $P$ and $Q$ be orthogonal projections on $L^2(0, \infty )$ such that $P+Q=I.$ Then 
\begin{align}\det \bigl( P\{W(\psi_+)^{-1},W(\psi_-)\}P\bigr)&=
\det \{W(\psi_+)^{-1},W(\psi_-)\}\det \big( Q\{ W(\psi_-),W(\psi_+)^{-1}\} Q\bigr)\nonumber\\
&=\det (I- \tilde \Gamma (f)\Gamma (g))^{-1}\det \big( Q-Q \tilde \Gamma (f)\Gamma (g) Q\bigr).\end{align}
Self-adjoint block Hankel matrices have been characterized up to unitary equivalence, as in Theorem 2 of [20].
\par
\begin{cor}\label{corBarnes}Let $a_j,b_j,c_j,d_j\in (0, \infty )$  and
\begin{equation}\psi (i\xi )= \prod_{j=1}^m {\frac{\Gamma (a_j+i\xi)}{\Gamma (b_j+i\xi )}}\prod_{j=1}^\mu {\frac{\Gamma (c_j-i\xi)}{\Gamma (d_j-i\xi )}}\end{equation}
\noindent where the zeros and poles satisfy 
\begin{equation}\sum_{j=1}^m (a_j-b_j)=0= \sum_{j=1}^\mu (c_j-d_j).\end{equation}
\noindent Then there exists a linear system as in (\ref{linsys2}) such that
\begin{equation}1/\det (W(\psi )W(\psi^{-1} )) =\det_2 (I+\Gamma_\Phi ).\end{equation} 
\indent (i) Also, $\phi_1$ and $\phi_2$ are real.\par
\indent (ii) Suppose further that $m=\mu$ and $a_j=c_j$ and $b_j=d_j$ for $j=1, \dots, m$. Then $\phi_1=\phi_2$ and $\Gamma_\Phi$ is self-adjoint.\end{cor}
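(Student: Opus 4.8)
The plan is to exhibit the Wiener--Hopf factorization of $\psi$ explicitly in terms of the gamma factors, to check by Stirling's formula that it is a factorization of the type furnished by Lemma \ref{Wienerhopf}, to invoke Theorem \ref{WHfactors} for the determinant identity together with the linear system $(\ref{linsys2})$, and finally to read off (i) and (ii) from the behaviour of $\Gamma$ under complex conjugation and under $z\mapsto -z$.

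First I would put
\[
\psi_+(z)=\prod_{j=1}^m\frac{\Gamma(a_j+z)}{\Gamma(b_j+z)},\qquad
\psi_-(z)=\prod_{j=1}^\mu\frac{\Gamma(c_j-z)}{\Gamma(d_j-z)},
\]
so that $\psi(i\xi)=\psi_-(i\xi)\psi_+(i\xi)$. Since $1/\Gamma$ is entire and $\Gamma$ is zero-free with poles only at the non-positive integers, for any $\varepsilon'$ with $0<\varepsilon'<\min\bigl(1,\min_j\{a_j,b_j,c_j,d_j\}\bigr)$ the function $\psi_+$ is holomorphic, bounded and zero-free, with $1/\psi_+$ also bounded, on $\{\Re z>-\varepsilon'\}$, and similarly $\psi_-$ on $\{\Re z<\varepsilon'\}$; in particular $\psi$ has no zeros or poles on $i{\mathbb R}$. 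The crucial estimate is Stirling's formula in the form $\Gamma(a+z)/\Gamma(b+z)=z^{a-b}\bigl(1+O(1/z)\bigr)$, uniformly for $z$ in a sector omitting the negative real axis; the balance conditions $\sum_j(a_j-b_j)=0=\sum_j(c_j-d_j)$ then cancel the leading power and leave
\[
\psi_\pm(\eta+i\xi)=1+O(1/|\xi|)\qquad(|\xi|\to\infty),
\]
uniformly for $|\eta|\le\varepsilon'$. This gives hypotheses (2) and (4) of Lemma \ref{Wienerhopf} for $\psi$ (with $\delta=1/2$), the membership $\psi-1\in{\mathcal C}_2^0(\varepsilon')$ since $|\psi-1|^2=O(1/|\xi|^2)$ is integrable, and the properties (i)--(iii) listed in Lemma \ref{Wienerhopf} for the explicit pair $\psi_\pm$; hypothesis (1) is the absence of imaginary zeros noted above, and the vanishing of the winding number in hypothesis (3) follows because $\psi_+$ and $\psi_-$ are each zero-free on a half-plane containing $i{\mathbb R}$ and tend to $1$ at $\pm i\infty$, so $\arg\psi_\pm(i\xi)$ is single-valued, bounded, and has equal limits at $\pm\infty$. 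Hence $\psi$ admits a Wiener--Hopf factorization as in Lemma \ref{Wienerhopf}, and Theorem \ref{WHfactors}, applied with the linear system $(\ref{linsys2})$ determined by these $\psi_\pm$, gives $1/\det\bigl(W(\psi)W(\psi^{-1})\bigr)=\det_2(I+\Gamma_\Phi)$.

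For (i), I would use that $\Gamma$ is real on $(0,\infty)$, so $\overline{\Gamma(z)}=\Gamma(\bar z)$, whence $\overline{\psi_\pm(z)}=\psi_\pm(\bar z)$ on the relevant half-planes and in particular $\overline{\psi_\pm(-i\xi)}=\psi_\pm(i\xi)$ for $\xi\in{\mathbb R}$. Then both quotients $\psi_\pm(i\xi)/\overline{\psi_\pm(-i\xi)}$ appearing in Theorem \ref{WHfactors}(i) are identically $1$, so that condition holds and $\phi_1,\phi_2$ are real. For (ii), the further hypotheses $m=\mu$, $a_j=c_j$, $b_j=d_j$ give $\psi_-(z)=\prod_j\Gamma(a_j-z)/\Gamma(b_j-z)=\psi_+(-z)$, hence $\psi_-(i\xi)\psi_-(-i\xi)=\psi_+(-i\xi)\psi_+(i\xi)=\psi_+(i\xi)\psi_+(-i\xi)$, which is exactly the condition in Theorem \ref{WHfactors}(ii); therefore $\phi_1=\phi_2$. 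Since (i) and (ii) of Theorem \ref{WHfactors} both hold, the last sentence of that theorem forces (iii), so $\Gamma_\Phi$ is self-adjoint (equivalently $\overline{\phi_1}=\phi_2$, which is immediate once $\phi_1=\phi_2$ is real).

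The only genuine work is the asymptotic step: making the Stirling estimate uniform across the strip and observing that it is precisely the two balance conditions that remove the leading power $z^{\sum(a_j-b_j)}$ and leave $\psi_\pm\to 1$. After that it is bookkeeping, the one pitfall being to keep straight which product is the ``$+$'' factor and how complex conjugation and $z\mapsto -z$ act on it when matching the conditions of Theorem \ref{WHfactors}.
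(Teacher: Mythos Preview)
Your proof is correct and follows essentially the same strategy as the paper: exhibit the explicit gamma-product factors, use Stirling's formula together with the balance conditions $\sum(a_j-b_j)=0=\sum(c_j-d_j)$ to obtain the required decay at infinity, invoke Theorem \ref{WHfactors}, and read off (i), (ii) from the Schwarz reflection $\overline{\Gamma(z)}=\Gamma(\bar z)$ and the symmetry $\psi_-(z)=\psi_+(-z)$ under the extra hypotheses.

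Two minor discrepancies are worth flagging. First, you assign the labels the opposite way round from the paper's proof: you call $\prod_j\Gamma(a_j+z)/\Gamma(b_j+z)$ the $\psi_+$ factor, whereas the paper calls it $\psi_-$. Your labelling is actually the one consistent with the conventions of Lemma \ref{Wienerhopf} (holomorphic on the right half-plane should be $\psi_+$); in any case the determinant identity and the three conditions in Theorem \ref{WHfactors} are symmetric under swapping $\psi_+\leftrightarrow\psi_-$, so nothing is affected. Second, your Stirling estimate $\psi_\pm=1+O(1/|\xi|)$ is what the expansion $\Gamma(a+z)/\Gamma(b+z)=z^{a-b}(1+O(1/z))$ gives directly and is entirely sufficient; the paper states the slightly sharper $O(1/|\xi|^{3/2})$ without detailed justification, but your weaker bound already places $\psi-1$ in ${\mathcal C}_2^0$ and meets hypothesis (4) of Lemma \ref{Wienerhopf} with $\delta=1/2$.
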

\vskip.05in
\begin{proof}(i) For $j=1, \dots, m$, let $a_j, b_j\in (0, \infty )$ be such that $\sum_{j=1}^m (a_j-b_j)=0$. 
Then
\begin{equation}\psi_-(i\xi )=\prod_{j=1}^m {\frac{\Gamma (a_j+i\xi)}{\Gamma (b_j+i\xi )}}\end{equation}
\noindent is meromorphic with poles at $\xi= ia_j, ia_j+i, ia_j+2i, \dots $ and zeros at  $\xi =ib_j, ib_j+i, ib_j+2i, \dots $, all in the open upper half plane. By considering the error term in Stirling's formula, one can prove that $\psi_-(i\xi )=1+O(1/\vert \xi\vert^{3/2})$ as $\xi\rightarrow\pm\infty$ along the real axis.\par
\indent Likewise, for $j=1, \dots, \mu$, let $c_j, d_j\in (0, \infty )$ be non zero real numbers such that $\sum_{j=1}^\mu (c_j-d_j)=0$. Then
\begin{equation}\psi_+(i\xi )=\prod_{j=1}^\mu {\frac{\Gamma (c_j-i\xi)}{\Gamma (d_j-i\xi )}}\end{equation}
\noindent is meromorphic with poles at $-ic_j, -ic_j-i, -ic_j-2i, \dots $ and zeros at  $-id_j, -id_j-i, -id_j-2i, \dots $, all in open lower half plane,  and $\psi_+(i\xi )=1+O(1/\vert \xi\vert^{3/2})$ as $\xi\rightarrow\pm\infty$ along the real axis.\par
 \indent Hence $\psi_-/\psi_+-1$ and  $\psi_+/\psi_--1$ belong to $L^1(i{\mathbb  R})$ and $L^\infty (i{\mathbb R})$ and determine bounded Hankel operators. We proceed to realise these via linear systems. Let $H=L^2(-\infty, \infty )$ and ${\mathcal D}(A)=\{ g\in H: \xi g(\xi )\in H\}$.  Then we introduce the linear systems $(-A,B_1,C)$ and $(-A,B_2, C)$ by 
\begin{align}A:{\mathcal D}(A)\rightarrow H: &\quad  g(\xi )\mapsto i\xi g(\xi)\qquad (g\in {\mathcal D}(A));\nonumber\\
B_1: {\mathbb C}\rightarrow H:&\quad \beta\mapsto \Bigl({\frac{\psi_-(i\xi )}{\psi_+ (i\xi )}}-1\Bigr) \beta \qquad (\beta\in {\mathbb C});\nonumber\\
B_2: {\nonumber C}\rightarrow H:&\quad \beta\mapsto \Bigl({\frac{\psi_+(-i\xi )}{\psi_- (-i\xi )}}-1\Bigr) \beta \qquad (\beta\in {\mathbb C});\nonumber\\
C: {\mathcal D}(A)\rightarrow {\mathbb C}: &\quad g\mapsto {\frac{1}{2\pi}}\int_{-\infty }^\infty g(\xi )\,d\xi \qquad (g\in {\mathcal D}(A)).\end{align}
\noindent Then $\phi_1 (t) =Ce^{-tA}B_1$ and $\phi_2(t) =Ce^{-tA}B_2$.  Also, $\Gamma_{\phi_1}$ and $\Gamma_{\phi_2}$ are Hilbert--Schmidt. Hence $\Gamma_{\phi_1}\Gamma_{\phi_2}$ is a trace class operator, and $\det (I-\Gamma_{\phi_1}\Gamma_{\phi_2})$ is well defined.\par
\indent (i) Here we have $\psi_- (i\xi )=\bar\psi_- (-i\xi )$ and  $\psi_+ (i\xi )=\bar\psi_+ (-i\xi )$, so $\phi_1$ and $\phi_2$ are real.\par  
\indent (ii) This is a special case of (ii) of the Theorem.\par
\end{proof}
\end{section} 
\begin{section}{Determinant expansions}\par
\indent In case (ii) of the Corollary \ref{corBarnes} we can compute $\phi_1$ and $\phi_2$ explicitly.  Theorem 1.4 page 237 of [24] shows that a Hankel operator is trace class if and only if it has a nuclear expansion as a series of Hankel operators of rank one. So to compute $\Gamma_{\phi_1}$ and  $\Gamma_{\phi_2}$ as trace class operators on $L^2(x, \infty )$, we select a sequence of exponential functions $(e^{-\lambda_jt})_{j=0}^\infty$ in $L^2(x, \infty )$ so that  $\Gamma_{\phi_1}$ has a nuclear expansion in terms of rank one Hankel operators; ultimately, this will enable us to compute the determinant of $I-\Gamma_{\phi_1}\Gamma_{\phi_2}$ compressed to $L^2(x, \infty )$ in terms of an infinite matrix. For large $x$, most of the entries of this matrix are very small, so this is a practicable means for computing the determinant. Our method follows [9].\par 
\indent  Let $s=i\xi$ and $z=e^{-x}$ where $\Re x>0$ so $\vert z\vert <1$. We consider
\begin{align}\phi_1(x)&={\frac{1}{2\pi}}\int_{-\infty}^\infty \Bigl( {\frac{\psi_-(i\xi )}{\psi_+(i\xi )}}-1\Bigr) e^{-i\xi x} \, d\xi\nonumber\\
 &={\frac{1}{2\pi i}}\int_{-i\infty}^{i\infty}\Bigl( \prod_{\ell =1}^m {\frac{\Gamma (a_\ell +s)  \Gamma (d_\ell -s)}{\Gamma (b_\ell +s)\Gamma (c_\ell -s)}}-1\Bigr)z^{-s}\, ds\nonumber\\
& ={\frac{1}{2\pi i}}\int_{-i\infty}^{i\infty} \Bigl( \prod_{\ell =1}^m      
{\frac{\Gamma (1-b_\ell -s){\hbox{cosec}\,} \pi (a_\ell +s)  \Gamma (d_\ell -s)} {\Gamma (1-a_\ell -s){\hbox{cosec}\,} \pi (b_\ell +s)\Gamma (c_\ell -s)}}-1\Bigr)z^{-s}\, ds\end{align}
\noindent where we have used the formula $\Gamma (w)\Gamma (1-w)=\pi {\hbox{cosec}}\, \pi w$; now we take an integral round a semicircular contour in the left half plane and sum over the residues at poles near the negative real axis of $s$ to obtain
\begin{align}\phi_1(x)&=\sum_{j=1}^m\sum_{k=0}^\infty{\hbox{Res}}(-a_j-k)\\
&= \sum_{j=1}^m \sum_{k=0}^\infty {\frac{(-1)^k}{\pi \Gamma (1+k)}}\prod_{\ell =1; \ell\neq j}^m {\frac{{\hbox{cosec}\,} \pi (a_\ell -a_j-k)}{\Gamma (1-a_\ell +a_j+k)}}\nonumber\\
&\quad\times  \prod_{\ell =1}^m {\frac{\Gamma (1-b_\ell +a_j +k)\Gamma (d_\ell +a_j+k)}{ {\hbox{cosec}\,} \pi (b_\ell -a_j-k)\Gamma (c_\ell +a_j+k)}} z^{a_j+k}
\end{align}
where we have picked out the factor ${\hbox{cosec}}\,\pi (s_j+s)/\Gamma (1-a_j-s)$ that contributes the pole, so
\begin{align}&\phi_1(x)\nonumber\\
&=\sum_{j=1}^m {\frac{1}{\pi}}\prod_{\ell : \ell\neq j} {\frac{ {\hbox{cosec}\,}\pi (a_\ell -a_j)}{\Gamma (1-a_\ell +a_j)}} \prod_{\ell =1}^m {\frac{\Gamma (1-b_\ell +a_j)\Gamma (d_\ell +a_j)}{ {\hbox{cosec}}\, \pi (b_\ell -a_j)\, \Gamma (c_\ell +a_j)}}\nonumber\\
&\quad \times  \sum_{k=0}^\infty \prod_{\ell =1}^m {\frac{(1-b_\ell +a_j)_k(d_\ell +a_j)_k}{(1-a_\ell +a_j)_k (c_\ell +a_j)_k}} z^{a_j+k},\cr
&=\sum_{j=1}^m {\frac{1}{\pi}}\prod_{\ell : \ell\neq j} {\frac{ {\hbox{cosec}\,}\pi (a_\ell -a_j)}{\Gamma (1-a_\ell +a_j)}} \prod_{\ell =1}^m  {\frac{\Gamma (1-b_\ell +a_j)\Gamma (d_\ell +a_j)}{ {\hbox{cosec}}\, \pi (b_\ell -a_j)\, \Gamma (c_\ell +a_j)}}z^{a_j}\nonumber\\
&\quad\times  {}_{2m}F_{2m-1}\Bigl[ {{1-b_1+a_j}\atop {1-a_1+a_j}}{{\dots}\atop{\dots}}  {{1-b_j+a_j}\atop {\hat 1}}{{\dots}\atop{\dots}} 
 {{1-b_m+a_j}\atop {1-a_m+a_j}} \,\,{{d_1+a_j}\atop {c_1+a_j}}{{\dots}\atop{\dots}} {{d_j+a_j}\atop {c_j+d_j}}{{\dots}\atop{\dots}}  
 {{d_m+a_j}\atop {c_m+a_j}}; z\Bigr],\nonumber\end{align}
\noindent where 
\begin{equation}z=e^{-x},\quad (c)_0=1, \quad (c)_k=c(c+1)\dots (c+k-1),\end{equation}
\noindent and $\hat 1$ stands for the omitted term in the denominator, and we have written this expression in terms of the generalized hypergeometric functions, as in [11], page 182. There is a similar formula for $\phi_2(z)$ in which $(c_j,d_j,b_j,a_j)$ replaces $(a_j,b_j,d_j,c_j)$\par
\indent Without loss of generality, we suppose $a_1<a_2<\dots <a_m$, so taking the term from ${\hbox{Res}}(-a_1)$, we have
\begin{equation}\phi_1(x)= {\frac{1}{\pi}}\prod_{\ell =2}^m {\frac{ {\hbox{cosec}\,}\pi (a_\ell -a_1)}{\Gamma (1-a_\ell +a_1)}} \prod_{\ell =1}^m {\frac{\Gamma (1-b_\ell +a_1)\Gamma (d_\ell +a_1)}{ {\hbox{cosec}}\, \pi (b_\ell -a_1)\, \Gamma (c_\ell +a_1)}} e^{-a_1x}+O(e^{-a_2x}+e^{-(a_1+1)x}),\end{equation}
\noindent and likewise with $c_1<c_2<\dots <c_m$, taking the term from ${\hbox{Res}}(-c_1)$, we have

\begin{equation}\phi_2(x)= {\frac{1}{\pi}}\prod_{\ell =2}^m {\frac{ {\hbox{cosec}\,}\pi (c_\ell -c_1)}{\Gamma (1-c_\ell +c_1)}} \prod_{\ell =1}^m {\frac{\Gamma (1-d_\ell +c_1)\Gamma (b_\ell +c_1)}{ {\hbox{cosec}}\, \pi (d_\ell -c_1)\, \Gamma (a_\ell +c_1)}} e^{-c_1x}+O(e^{-c_2x}+e^{-(c_1+1)x}).\end{equation}
\noindent We replace the doubly indexed family of powers by a singly indexed sequence by introducing $j=mk+r$ and  $\lambda_j=a_{r+1}+k$ and $\eta_j=c_{r+1}+k$, thus obtaining the sequences 
\begin{equation}\label{lambda}(\lambda_j)_{j=0}^\infty =(a_1, a_2, \dots ,a_m, a_1+1, a_2+1, \dots, a_m+1, a_1+2, \dots ),\end{equation}
\begin{equation}\label{eta}(\eta_j)_{j=0}^\infty =(c_1, c_2, \dots , c_m, c_1+1, c_2+1, \dots, c_m+1, c_1+2, \dots ),\end{equation}
\noindent where there is a recurring pattern of length $m$. With the coefficients given above, suitably re-indexed, let 
\begin{equation}\label{phi}\phi_1(x)=\sum_{j=0}^\infty \xi_j e^{-\lambda_jx}, \quad \phi_2(x)=\sum_{j=0}^\infty \gamma_j e^{-\eta_jx}.\end{equation}
\vskip.05in

\begin{prop}Suppose that $\phi_1$ and $\phi_2$ are as in (\ref{lambda}), (\ref{eta}) and (\ref{phi}). Then the determinant from Corollary \ref{corBarnes} is given by
\begin{equation}\det (I-\Gamma_{\phi_1}\Gamma_{\phi_2})\vert_{L^2(x, \infty )}=\det\begin{bmatrix} I&  \Bigl[ {\frac{e^{-x(\eta_\ell+\lambda_j )}\xi_j}{\eta_\ell+\lambda_j}}\Bigr]_{j,\ell =0}^\infty\\
\Bigl[ {\frac{e^{-x(\eta_j+\lambda_\ell )}\gamma_j}{\eta_j+\lambda_\ell }}\Bigr]_{j,\ell =0}^\infty&I\end{bmatrix}.\end{equation}\end{prop}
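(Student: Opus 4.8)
The plan is to turn both Hankel operators on $L^2(x,\infty)$ (recall $\Re x>0$) into products of an $\ell^2$-to-$L^2$ map, a diagonal operator, and the adjoint map, and then to collapse the Fredholm determinant down to $\ell^2$ by repeated use of the Sylvester identity $\det(I-AB)=\det(I-BA)$ together with the Schur-complement formula for block determinants.

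First I would record the elementary identity $\int_x^\infty e^{-\lambda t}e^{-\eta t}\,dt=e^{-x(\lambda+\eta)}/(\lambda+\eta)$ for $\lambda,\eta>0$ and $\Re x>0$. Writing $e_\mu(t)=e^{-\mu t}$ for $t>x$ and substituting $s+t$ into the expansions (\ref{phi}), one gets the rank-one expansions $\Gamma_{\phi_1}=\sum_j\xi_j\,e_{\lambda_j}\otimes e_{\lambda_j}$ and $\Gamma_{\phi_2}=\sum_j\gamma_j\,e_{\eta_j}\otimes e_{\eta_j}$ on $L^2(x,\infty)$; equivalently $\Gamma_{\phi_1}=E_\lambda D_\xi E_\lambda^{*}$ and $\Gamma_{\phi_2}=E_\eta D_\gamma E_\eta^{*}$, where $E_\lambda,E_\eta\colon\ell^2\to L^2(x,\infty)$ send $e_j$ to $e_{\lambda_j}$, $e_{\eta_j}$ respectively, and $D_\xi,D_\gamma$ are diagonal with entries $\xi_j,\gamma_j$. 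Then I would check that $E_\lambda$ and $E_\eta$ are Hilbert--Schmidt: $\Vert E_\lambda\Vert_{{\mathcal L}^2}^2=\sum_j e^{-2\lambda_j\Re x}/(2\lambda_j)<\infty$ because by (\ref{lambda}) the $\lambda_j$ grow linearly, so this series converges geometrically whenever $\Re x>0$; likewise for $E_\eta$ by (\ref{eta}). Since the coefficients $\xi_j,\gamma_j$ are bounded (the hypergeometric factors producing them are $O(1)$ in $k$), $D_\xi,D_\gamma$ are bounded, hence $\Gamma_{\phi_1},\Gamma_{\phi_2}$ --- and a fortiori $\Gamma_{\phi_1}\Gamma_{\phi_2}$ --- are trace class and the above series converge in the relevant Schatten norms.

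Next I would identify the blocks. By the integral formula, $(E_\lambda^{*}E_\eta)_{j\ell}=\langle e_{\eta_\ell},e_{\lambda_j}\rangle=e^{-x(\eta_\ell+\lambda_j)}/(\eta_\ell+\lambda_j)$, so $D_\xi E_\lambda^{*}E_\eta$ is exactly the upper-right block $K_1:=[\,e^{-x(\eta_\ell+\lambda_j)}\xi_j/(\eta_\ell+\lambda_j)\,]_{j,\ell}$, and symmetrically $D_\gamma E_\eta^{*}E_\lambda$ is the lower-left block $K_2:=[\,e^{-x(\eta_j+\lambda_\ell)}\gamma_j/(\eta_j+\lambda_\ell)\,]_{j,\ell}$; both are trace class on $\ell^2$ since $E_\lambda^{*}E_\eta$ and $E_\eta^{*}E_\lambda$ are products of Hilbert--Schmidt operators. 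Writing $\Gamma_{\phi_1}\Gamma_{\phi_2}=E_\lambda\,(D_\xi E_\lambda^{*}E_\eta)\,(D_\gamma E_\eta^{*})=E_\lambda\,K_1\,(D_\gamma E_\eta^{*})$ and applying $\det(I-AB)=\det(I-BA)$ with $A=E_\lambda$ and $B=K_1 D_\gamma E_\eta^{*}$ moves the left factor to the right: $\det(I-\Gamma_{\phi_1}\Gamma_{\phi_2})=\det(I-K_1 D_\gamma E_\eta^{*}E_\lambda)=\det(I-K_1K_2)$, a Fredholm determinant on $\ell^2$. Finally the block identity $\det\begin{bmatrix}I&K_1\\ K_2&I\end{bmatrix}=\det(I-K_2K_1)=\det(I-K_1K_2)$ (Schur complement with invertible $(1,1)$ block, plus one more application of the Sylvester identity) closes the argument.

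The routine parts are the integral and the determinant manipulations; the step requiring care is the convergence bookkeeping --- verifying that the singly indexed reindexing (\ref{lambda})--(\ref{eta}) really yields a nuclear expansion, i.e. $\sum_j|\xi_j|\,\Vert e_{\lambda_j}\Vert^2<\infty$ and the analogue for $\phi_2$, so that every manipulation above is an identity between trace-class operators rather than a formal one. I expect this, and the attendant bound on the hypergeometric coefficients $\xi_j,\gamma_j$, to be the main obstacle; it would be handled by estimating the terms of the ${}_{2m}F_{2m-1}$ series through the ratio of consecutive Pochhammer symbols, which tends to $1$, so that the coefficients stay bounded in $k$ while $\Vert e_{\lambda_j}\Vert^2$ decays geometrically thanks to $|z|=e^{-\Re x}<1$.
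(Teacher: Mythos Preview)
Your argument is correct and is essentially the paper's own proof, only organized a bit differently. The paper packages the rank-one exponential expansions into a linear system $(-A,B,C)$ on $\ell^2\oplus\ell^2$, forms the operator $R_x=\int_x^\infty e^{-tA}BCe^{-tA}\,dt$ (whose off-diagonal blocks are exactly your $K_1$ and $K_2$), and then factors $\Gamma_{\Phi_{(x)}}=\Theta_x^\dagger\Xi_x$, $R_x=\Xi_x\Theta_x^\dagger$ to invoke the same Sylvester identity; the block structure is built in from the $2\times2$ scattering function $\Phi$ rather than recovered at the end by Schur complement as you do. Your maps $E_\lambda D_\xi$, $E_\eta D_\gamma$ play the role of the paper's $\Xi_x$, and $E_\lambda^*$, $E_\eta^*$ that of $\Theta_x^\dagger$, so the two computations match line by line once the notation is unwound. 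Your convergence bookkeeping is also at the same level of detail as the paper's (which simply asserts $\sum|\xi_j|e^{-\lambda_j x}/|\lambda_j|<\infty$); the balancing condition $\sum(a_j-b_j)=\sum(c_j-d_j)=0$ indeed forces the Pochhammer ratio to have polynomial growth in $k$, which is dominated by the geometric decay $e^{-k\Re x}$.
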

\vskip.05in
\begin{proof} We have a series of rank-one kernels
\begin{equation}\label{rankoneagain}\Gamma_{\phi_{1, x}}\leftrightarrow \sum_{j=0}^\infty \xi_j e^{-2\lambda_jx} e^{-\lambda_j(s+t)}\end{equation}
\noindent where $\sum \vert \xi_j\vert e^{-\lambda_jx}/\vert\lambda_j\vert$ converges, so $\Gamma_{\phi_1}$ is trace class on $L^2(x, \infty )$. 
 Then we introduce the linear systems $(-A_1, B_1, C_1)$ with ${\mathcal D}(A_1) ={\mathcal D}(A_2)=\{ (u_j)_{j=0}^\infty \in \ell^2: (ju_j)_{j=0}^\infty \in \ell^2\}$. 
\begin{align} B_1: {\mathbb C}\rightarrow \ell^2:&\quad \beta\mapsto (\xi_j)_{j=0}^\infty \beta ;\nonumber\\
                       A_1: {\mathcal D}(A_1)\rightarrow \ell^2: &\quad (u_j)_{j=0}^\infty \mapsto (\lambda_ju_j)_{j=0}^\infty;\nonumber\\
                       C_1: {\mathcal D}(A_1)\rightarrow {\mathbb C}:& \quad (u_j)\mapsto\sum_{j=0}^\infty u_j;\end{align}
\noindent and likewise  $(-A_2, B_2, C_2)$ 
\begin{align}B_2: {\mathbb C}\rightarrow \ell^2:&\quad \beta\mapsto (\gamma_j)_{j=0}^\infty \beta ;\nonumber\\
                       A_2: {\mathcal D}(A_2)\rightarrow \ell^2: &\quad (u_j)_{j=0}^\infty \mapsto (\eta_ju_j)_{j=0}^\infty ;\nonumber\\
                       C_2: {\mathcal D}(A_2)\rightarrow {\mathbb C}:& \quad (u_j)\mapsto\sum_{j=0}^\infty u_j;\end{align}
\noindent when we combine them into $(-A,B,C)$ 
\begin{equation}\Bigl( \begin{bmatrix}-A_1&0\cr 0&-A_2\end{bmatrix},  \begin{bmatrix}B_1&0\\ 0&B_2\end{bmatrix}, \begin{bmatrix}0&C_1\\ C_2&0\end{bmatrix}\Bigr)\end{equation}
\noindent with scattering function
\begin{align}\Phi (x)&=Ce^{-xA}B\nonumber\\
&=\begin{bmatrix}0&C_2e^{-xA_2}B_2\\ C_1e^{-xA_1}B_1&0\end{bmatrix}\nonumber\\
&=\begin{bmatrix}0&\phi_2(x)\\ \phi_1(x)&0\end{bmatrix},\end{align}
\noindent and write $\Phi (t+2x)=\Phi_{(x)}(t)$. We also consider the operator
\begin{align}\label{R}R_x&=\int_x^\infty e^{-tA}BCe^{-tA}\, dt\nonumber\\
&=\int_x^\infty \begin{bmatrix}e^{-A_1t}&0\\ 0&e^{-A_2t}\end{bmatrix}\begin{bmatrix}B_1&0\\ 0&B_2\end{bmatrix}\begin{bmatrix}0&C_1\\ C_2&0\end{bmatrix}\begin{bmatrix}e^{-A_1t}&0\\ 0&e^{-A_2t}\end{bmatrix} dt\nonumber\\
&=\begin{bmatrix} 0&\int_x^\infty e^{-A_1t}B_1C_2e^{-A_2t}dt\\ \int_x^\infty e^{-A_2t}B_2C_1e^{-tA_1}dt&0\end{bmatrix}.\end{align}
\indent To help compute the Fredholm determinant of $R_x$, we also let
$\Xi_{x} :L^2(0, \infty )\rightarrow \ell^2$ and $\Theta_x : L^2(0, \infty )\rightarrow \ell^2$ be defined by
\begin{equation}\Xi_x f=\int_x^\infty e^{-tA}B f(t)\, dt\end{equation}
\begin{equation}\Theta_x f=\int_x^\infty e^{-sA^\dagger} C^\dagger f(s)\, ds.\end{equation}
\noindent We observe that $\Theta_x$ is trace class, and likewise $\Xi_x$ is trace class since $\sum_{j}\xi_j e^{-\lambda_jx}$ converges absolutely. Whereas $(e^{-\lambda_jt})_{j=0}^\infty$ is not an orthogonal basis, the map $\Theta_x$ is injective by Lerch's theorem and ${\hbox{span}}\{ e^{-\lambda_jt} , j=0, 1, \dots \}$ is dense in $L^2(0, \infty )$.\par  
\indent Then we observe that $\Gamma_{\Phi_{(x)}}=\Theta_x^\dagger\Xi_x: L^2(0,\infty )\rightarrow L^2(0, \infty )$ and $R_{x}=\Xi_x\Theta_x^\dagger :\ell^2\rightarrow\ell^2$, so that 
\begin{equation}\det (I+R_{x})=\det (I+\Xi_x\Theta_x^\dagger) =\det (I+\Theta_x^\dagger \Xi_x )=\det (I+\Gamma_{\Phi_{(x)}} ).\end{equation}
\noindent hence
\begin{equation}\det (I+R_x)=\det (I-\Gamma_{\phi_1}\Gamma_{\phi_2})\vert_{L^2(0,\infty )}.\end{equation}
\noindent With respect to the standard orthonormal basis of $\ell^2$, we have a matrix representation
\begin{equation}R^{1,2}=\int_x^\infty e^{-A_1t}B_1C_2e^{-A_2t}dt=\Bigl[ {\frac{e^{-x(\eta_\ell+\lambda_j )}\xi_j}{\eta_\ell+\lambda_j }}\Bigr]_{j,\ell =0}^\infty\end{equation}
\noindent for the top right corner of $R_x$ as in (\ref{R}), and 
\begin{equation}R^{2,1}=\int_x^\infty e^{-A_2t}B_2C_1e^{-A_1t}dt=\Bigl[ {\frac{e^{-x(\eta_j+\lambda_\ell )}\gamma_j}{\eta_j+\lambda_\ell }}\Bigr]_{j,\ell =0}^\infty\end{equation}
\noindent for the bottom left corner of $R_x$ as in (\ref{R}).\par
\indent Whereas $R^{2,1}$ is not quite the transpose of $R^{1,2}$, the matrices have a high degree of symmetry which becomes clear when we make our expansion of the determinant. For a finite subset $S$ of ${\mathbb N}\cup \{0\}$, let $\sharp S$ be the cardinality of $S$, and $\Delta_{j\in S}(\lambda_j)$ be Vandermonde's determinant formed from $\lambda_j$ with $j\in S$ 
naturally ordered. For an infinite matrix $V$, and $T\subset  {\mathbb N}\cup \{0\}$, let $\det [V]_{S\times T}$ be the determinant formed from the submatrix of $V$ with rows indexed by $j\in S$ and columns indexed by $\ell\in T$, naturally ordered. Then 
\begin{align} \det (I-R^{1,2}R^{2,1})&=\sum_{S\subset {\mathbb N}\cup \{0\}} (-1)^{\sharp S} \det [R^{1,2}R^{2,1}]_{S\times S}\nonumber\\
&=\sum_{S,T\subset {\mathbb N}\cup \{0\}, \sharp S=\sharp T} (-1)^{\sharp S}\det [R^{1,2}]_{S\times T}\det [R^{2,1}]_{T\times S},\end{align}
\noindent by the Cauchy--Binet formula. Then by Cauchy's formula, the summand involving $S\times T$ is 
\begin{align}\label{Binet}\det [R^{1,2}]_{S\times T}&\det [R^{2,1}]_{T\times S}\nonumber\\
&=\exp\Bigl(-x\sum_{j\in S}\lambda_j-x\sum_{\ell\in T}\eta_\ell\Bigr){\frac{\prod_{j\in S}\xi_j\Delta_{j\in S}(\lambda_j)\Delta_{\ell\in T}(\eta_\ell)}{ \prod_{j\in S, \ell\in T}(\lambda_j+\eta_\ell )}}\nonumber\\
&\quad\times   
\exp\Bigl(-x\sum_{j\in T}\eta_j-x\sum_{\ell\in S}\lambda_\ell\Bigr){\frac{\prod_{j\in T}\gamma_j\Delta_{j\in T}(\eta_j)\Delta_{\ell\in S}(\lambda_\ell)}{ \prod_{\ell\in S, j\in T}(\lambda_\ell+\eta_j)}}\end{align}
\noindent Hence we have the determinant expansion 
of 
\begin{align} &\det(I-R^{1,2}R^{2,1}).\\
&=\sum_{S,T\subset {\bf N}\cup \{0\}, \sharp S=\sharp T} (-1)^{\sharp S}\exp\Bigl(-2x\sum_{j\in S}\lambda_j-2x\sum_{\ell\in T}\eta_\ell\Bigr)
{\frac{\prod_{j\in S}\xi_j\prod_{\ell\in T}\gamma_\ell \Delta_{j\in S}(\lambda_j)^2\Delta_{\ell\in T}(\eta_\ell )^2}{\prod_{j\in S, \ell\in T}(\lambda_j+\eta_\ell )^2}}.\nonumber\end{align}
\end{proof}
\indent We now make an approximation, similar to (2.30) from [2].  Suppose that $x$ is large, so that we only need retain the largest terms, which arise from $j=\ell =0$, that is $S=T=\{0\}$; then
\begin{align}\det (I+R_x)&=\det \begin{bmatrix}1& {\frac{e^{-x(\eta_0+\lambda_0)}\xi_0}{\eta_0+\lambda_0}}\\
{\frac{e^{-x(\eta_0+\lambda_0)}\gamma_0}{\eta_0+\lambda_0}}&1\end{bmatrix}\cr
&= 1-{\frac{e^{-2x(\lambda_0+\eta_0)}\xi_0\gamma_0}{(\eta_0+\lambda_0)^2}}\end{align}

 
\begin{defn}For $\Omega$ a domain in ${\mathbb C}$, a divisor is function $\delta: \Omega\rightarrow {\mathbb Z}$ such that $\{ z: \delta (z)\neq 0\}$ has no limit points in $\Omega$. In particular, the function $\delta_z: {\mathbb C}\rightarrow {\mathbb Z}$ given by $\delta_z(x)=1$ for $x=z$ and $\delta_z(x)=0$ for $x\neq z$ is a divisor.\end{defn}

The set of all divisors on $\Omega$ forms an additive group ${\mathcal D}(\Omega )$. For each meromorphic function, we associate the divisor given by the sum of $n\delta_z$ for each zero of order $n$ at $z$, and $-m\delta_p$ for each pole of order $m$ at $p$. For $\Gamma$-functions, it is convenient to have the following shorthand. For $s\in {\mathbb C}$ we write
\begin{equation} (s)_R=\delta_s+\delta_{s+1}+\delta_{s+2}+\dots,\end{equation}
\begin{equation} (s)_L=\delta_s+\delta_{s-1}+\delta_{s-2}+\dots.\end{equation}   
There is an additive subgroup ${\mathcal D}_\Gamma$ of ${\mathcal D}({\mathbb C})$ generated by the $\pm (a)_L$ and $\pm (b)_R$ with $a,b\in {\mathbb C}$, so that every $\delta \in {\mathcal D}_\Gamma$ arises from a quotient of products of Gamma functions, and ${\mathcal D}_\Gamma$ contains all finitely supported divisors.  
\end{section} 
\begin{section}{Examples}

\begin{ex} (i) Suppose that $\sum_{j=1}^m (-a_j+b_j)+\sum_{j=1}^{\mu}(-c_j+d_j)=0.$ Then there exists $\lambda$ such that $1-\lambda +\sum_{j=1}^m (-a_j+b_j)=0$ and $1-\lambda+\sum_{j=1}^\mu (c_j-d_j)=0$. Hence we can apply Corollary \ref{corBarnes} to $S(\xi\mid \lambda )=\psi (i\xi)=\tilde \psi_-(i\xi )\tilde \psi_+(i\xi )$  for the Wiener--Hopf factors
\begin{equation} \tilde \psi_-(i\xi )={\frac{\Gamma (\lambda -i\xi )}{ \Gamma (1-i\xi )}}\prod_{j=1}^m {{\Gamma (a_j+i\xi)}\over{\Gamma (b_j+i\xi )}}, \quad \tilde\psi_+(i\xi )= {\frac{\Gamma (1+i\xi )}{ \Gamma (\lambda +i\xi )}}\prod_{j=1}^\mu  {\frac{\Gamma (c_j-i\xi)}{\Gamma (d_j-i\xi )}}.\end{equation}
(ii) This example arises via the scattering amplitude in one-dimentional scattering theory. Let $q\in C_c^\infty ({\mathbb R}; {\mathbb R})$, and consider the Schr\"odinger equation with even potential $q$. There exist and even solution $f_+$ and and odd solution $f_-$ such that 

$$ -{\frac{d^2 f_{\pm }}{dx^2}}(x;\xi )+q(x) f_{\pm }(x; \xi )=\xi^2 f_{\pm }(x; \xi )$$
such that 
 $$f_{\pm }(x; \xi )\asymp e^{-i\xi x}-e^{i\xi x-i\theta_{\pm}(\xi )}\qquad (x\rightarrow\infty )$$
$$f_{\pm }(x; \xi )\asymp \pm\bigl(e^{-i\xi x}-e^{i\xi x-i\theta_{\pm}(\xi )}\bigr)\qquad (x\rightarrow-\infty )$$
so $\theta_{\pm}$ is the phase shift. Let the reflection coefficient be $R(\xi )=(-1/2) (e^{-i\theta_+(\xi )}+e^{-i\theta_-(\xi )})$. 
Then with $\phi (x)=(2\pi)^{-1}\int_{-\infty}^\infty e^{i\xi x}R(\xi )\, d\xi$, we introduce $\Gamma_\phi$ and 
$$\vartheta =\det (I+\Gamma_{\phi}),$$
\noindent as in [10]. In section 5.7 of [10], the authors interpret $\vartheta$ as a theta function on an infinite-dimensional torus, and obtain series expansions for the determinant. In the current paper, we use the exponential series (\ref{rankone}) and (\ref{rankoneagain}) instead, which lead to formulas (\ref{Binet}) which resemble those on 5.7 and 5.8 in [10].\par 
\indent In particular, consider the Schr\"odinger equation
\begin{equation}\label{Schrodinger}-f''(x)+{{\lambda (\lambda -1)}\over{\sinh^2 x}}f(x)=\xi^2 f(x).\end{equation}
\noindent Then the scattering amplitude is the coefficient of $f(x)$ for large $x$ when $e^{-i\lambda x}$ is the scattering function. Then 
\begin{equation}S(\xi \mid \lambda )=-e^{-i\theta_+(\xi )}=-{{\Gamma (1+i\xi )\Gamma (\lambda -i\xi )}\over{ \Gamma (1-i\xi )\Gamma (\lambda +i\xi )}}\end{equation}
\noindent which has divisor, in terms of $s=i\xi$, 
\begin{equation}-(-1)_L-(-\lambda)_R+(1)_R+(-\lambda )_L.\end{equation}
\end{ex}
\vskip.05in

\begin{ex} Meier's $G$-function [11] page 206  is 
\begin{equation}G^{m,n}_{p,q}\Bigl( x\Bigl\vert {{a_1, \dots, a_p}\atop{b_1, \dots, b_q}}\Bigr) ={\frac{1}{2\pi i}}\int_{\gamma -i\infty}^{\gamma +i\infty}{\frac{\prod_{j =1}^m\Gamma (b_j-s)\prod_{j =1}^n\Gamma (1-a_j+s)}{\prod_{j =m+1}^q\Gamma (1-b_j+s)\prod_{j =n+1}^p\Gamma (a_j-s)}} x^s\, ds\end{equation}
\noindent where we take all the $a_j, b_j$ in $\{ s: 0< \Re s<1\}$ with degree $p+q-2m-2n$, which we take to be negative. Then the divisor for the quotient of Gamma functions in the integrand is 
\begin{equation} \sum_{j=1}^m -(b_j)_R-\sum_{j=1}^n (a_j-1)_L+\sum_{j=m+1}^q (b_j-1)_L+\sum_{j=n+1}^p(a_j)_R.\end{equation}
\noindent Then the integral converges for $\vert \arg x\vert< (2m+2n-p-q)\pi/2$. \par
\indent One can express various applications of Corollary 4.3 in terms of $G$.\par
\end{ex}
\vskip.05in

\begin{ex} Hankel matrices also arise from functions on the finite-dimensional real torus. Let $1/2<\nu <1$ and observe that Struve's function $S_\nu$ [28, page 127] has Mellin transform
\begin{equation}S_\nu^*(s) =2^{s-1} {\frac{\tan((\pi/2 )(s+\nu ) )\Gamma ((s+\nu )/2)}{\Gamma ((\nu-s+2)/2)}},\end{equation}
\noindent which is holomorphic on $-\nu <\Re s<1-\nu$; see [28]. Also, for $s=\eta +i\xi$ and $-1<\eta<-1/2$, we have $S^*_\nu(\eta+i\xi )=O(\vert\xi\vert^\eta)$ as $\xi\rightarrow\pm\infty$, so $S^*(\eta +i\xi )\rightarrow 0$ as $\xi\rightarrow\pm\infty$ and
\begin{equation}\int_{-\infty}^\infty \vert S_\nu^* (\eta +i\xi )\vert^2\, d\xi<\infty,\end{equation}
\noindent hence by Plancherel's formula, we have
\begin{equation}\int_0^\infty x^{2\eta -1}S_\nu (x)^2\, dx<\infty.\end{equation} 
We have the determinant of the finite Hankel matrix
\begin{align}I_n(t)&={\frac{\pi^{n^2/2}}{t^{n(n-1)}2^{n(n-1)} }}\det\Bigl[ \Gamma (j+k+1/2) S_{j+k}(t)\Bigr]_{j,k=0, \dots, n-1}.\\
&=\det\Bigl[{\frac{S_{j+k}(t) \Gamma (j+k+1/2)\sqrt{\pi}}{2^{j+k}t^{j+k}}}\Bigr]_{j,k=0, \dots , n-1}\nonumber\\
&=\det\Bigl[\int_{[0,\pi /2] } \sin^{2(j+\ell )}\theta \, \sin(t\cos \theta )\, d\theta\Bigr]_{j, k =0, \dots , n-1}\nonumber\\
&={\frac{1}{n!}}\int_{[0, \pi/2]^n} \det [\sin^{2k }\theta_j]_{j,k =0, \dots , n-1}^2\prod_{j=0}^{n-1} \sin (t\cos\theta_j)\, d\theta_j\nonumber\\
&={\frac{1}{n!}}\int_{[0, \pi/2]^n} \prod_{j,k=0, \dots, n-1; j< k} (\sin^2\theta_j-\sin^2\theta_k )^2\prod_{j=0}^{n-1} \sin (t\cos\theta_j)\, d\theta_j\end{align} 
\noindent where the final formula resembles the Weyl integration formula for a class function on the symplectic group $Sp(n)$.\par
\end{ex}
\vskip.05in
\noindent The following example gives a case in which moments satisfy a type of recurrence relation, but do not quite satisfy the conclusions of the Theorem (\ref{Hankelfactors}). The linear system representation is found explicitly.\par
\vskip.05in
\begin{prop} For $\kappa >1$, introduce the weight $w(x)=\log (2\kappa /(1-x))$ for $-1<x<1$. Then the moment matrix
$[\mu_{j+k}]_{j,k=0}^\infty$ defines a bounded linear operator on $\ell^2$ which is not Hilbert--Schmidt.\end{prop}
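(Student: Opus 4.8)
The plan is to read the moment sequence off the weight, recognise $[\mu_{j+k}]$ as the classical Hilbert matrix modified in a single entry, and then settle boundedness and the Hilbert--Schmidt property separately.

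First I would compute the moments by expanding $w(x)=\log(2\kappa)-\log(1-x)=\log(2\kappa)+\sum_{n=1}^{\infty}x^{n}/n$, which gives $\mu_{0}=\log(2\kappa)$ and $\mu_{n}=1/n$ for $n\ge 1$. Equivalently $w$ solves $(1-x)w'(x)=1$, and inserting $w(x)=\sum_{n\ge 0}\mu_{n}x^{n}$ gives $\mu_{1}=1$ together with the two-term recurrence $(n+1)\mu_{n+1}=n\mu_{n}$ for $n\ge 1$; this is the promised ``type of recurrence relation''. It is not of the homogeneous semiclassical shape $\sum_{j}(\nu\xi_{j}+\eta_{j})\mu_{j+\nu}=0$, because $(1-x)w'=1$ is not of the form $2Vw$ with $V$ a polynomial, so $w$ fails to be semiclassical in Magnus's sense and Theorem~\ref{Hankelfactors} --- in particular its Hilbert--Schmidt conclusion --- does not apply here. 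Since $\mu_{n}=\int_{0}^{1}t^{n-1}\,dt$ for $n\ge 1$, an explicit linear-system realisation is immediate: one may take state space $L^{2}(0,1)$, let $A$ be multiplication by the variable $t$, let $B$ send $\beta\in\mathbb C$ to the constant function $\beta$, and let $C\colon g\mapsto\int_{0}^{1}g(t)\,dt$, recording $\mu_{0}$ separately. In particular $[\mu_{j+k}]_{j,k\ge 1}$ is the moment matrix of Lebesgue measure on $[0,1]$, i.e. the Hilbert matrix, and $[\mu_{j+k}]_{j,k\ge 0}$ differs from it only by a bordering row and column and by the $(0,0)$ entry.

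For boundedness, the quadratic form of $[\mu_{j+k}]$ on $u\in\ell^{2}$ equals $(\log 2\kappa)|u_{0}|^{2}+\int_{0}^{1}t^{-1}\bigl(|P_{u}(t)|^{2}-|u_{0}|^{2}\bigr)\,dt$, where $P_{u}(t)=\sum_{k}u_{k}t^{k}$ is the $H^{2}(\mathbb D)$-function attached to $u$. One cannot estimate this by the supremum of a symbol, since the analytic completion $\varphi(z)=-\log(1-z)$ of the moments has a genuine logarithmic singularity at $z=1$; instead one uses that Lebesgue measure on $[0,1]$ is a Carleson measure for $H^{2}(\mathbb D)$ --- equivalently, Hilbert's inequality, or Nehari's theorem applied to the \emph{bounded} symbol $\log(2\kappa)+i(\pi-\theta)$ on $(0,2\pi)$ (whose non-negative Fourier coefficients are exactly the $\mu_{n}$), or Fefferman's theorem [23] together with $-\log(1-z)\in\mathrm{BMOA}$. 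Any of these routes bounds the operator norm by $\pi+\log(2\kappa)$.

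Finally, counting the $n+1$ index pairs with $j+k=n$,
\[
\bigl\Vert[\mu_{j+k}]\bigr\Vert_{{\mathcal L}^{2}}^{2}=\sum_{j,k\ge 0}|\mu_{j+k}|^{2}=\sum_{n=0}^{\infty}(n+1)|\mu_{n}|^{2}=(\log 2\kappa)^{2}+\sum_{n=1}^{\infty}\frac{n+1}{n^{2}}\ \ge\ \sum_{n=1}^{\infty}\frac1n=+\infty ,
\]
so $[\mu_{j+k}]$ is not Hilbert--Schmidt. The step I expect to be the real obstacle is the boundedness: because the generating function $-\log(1-z)$ is unbounded on the circle, the ``bounded symbol'' shortcut is unavailable and one must invoke the classical boundedness of the Hilbert matrix (through Carleson measures, Hilbert's inequality, or the Nehari--Fefferman theorem); the moment computation, the recurrence, the linear system, and the Hilbert--Schmidt estimate are all routine.
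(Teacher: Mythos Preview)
There is a fundamental misreading at the very first step. In this proposition $w$ is a \emph{weight} on $(-1,1)$ and the moments are the integrals
\[
\mu_n=\int_{-1}^{1}x^{n}w(x)\,dx=\int_{-1}^{1}x^{n}\log\frac{2\kappa}{1-x}\,dx,
\]
not the Taylor coefficients of $w$. What you have computed is the Hankel matrix of the \emph{symbol} $\log(2\kappa/(1-z))$ on the disc, which is indeed the Hilbert matrix up to the $(0,0)$ entry; but that is a different object. The paper's own computation gives $\mu_0=2\log(2\kappa)+2-2\log 2$, not $\log(2\kappa)$, and for general $n$ (after an integration by parts and a two--term recurrence in $n$) one finds $\mu_n\asymp(\log n)/n$, with an alternating $\log 2$ contribution and a partial harmonic sum. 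Everything downstream in your proposal---the Carleson/Hilbert--inequality route to boundedness, the linear system on $L^2(0,1)$ with $A$ multiplication by $t$, and the Hilbert--Schmidt estimate $\sum(n+1)/n^{2}$---is therefore applied to the wrong operator.

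For the record, in the paper the non--Hilbert--Schmidt assertion still comes down to divergence of $\sum j\mu_j^{2}\asymp\sum(\log j)^{2}/j$. Boundedness, however, is obtained by a different mechanism: one transfers $[\mu_{j+k}]$ to a Hankel integral operator on $L^{2}(0,\infty)$ via the Laguerre basis, computes the scattering function explicitly as $\phi(x)=e^{x/2}\int_{1/2}^{\infty}s^{-1}\log(2\kappa s)\,e^{-sx}\,ds$, and realises it by a continuous--time linear system whose operator $R_\nu$ has kernel $(1/2+\tau)^{\nu-1}(t+\tau)^{-1}$, i.e.\ the composition of the Hilbert Hankel kernel $1/(t+\tau)$ with a bounded multiplication. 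So Hilbert's inequality does ultimately enter, but only after the Laguerre transfer; the moment matrix itself is not a rank--one perturbation of the Hilbert matrix.
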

\begin{proof} Here $w(x)\rightarrow \infty$ as $x\rightarrow 1-$, and $w(x)\rightarrow\log \kappa $ as $x\rightarrow (-1)+$. The moments satisfy
\begin{equation}\mu_0=\int_{-1}^1 w(x)\, dx=2\log (2\kappa )+2-2\log 2,\end{equation}
and generally
\begin{equation}\mu_n=\int_{-1}^1 x^nw(x)\, dx=\log (2\kappa )\int_{-1}^1 x^n\, dx-\int_{-1}^1 x^n\log (1-x)\, dx,\end{equation}
where by integrating by parts, one obtains
\begin{align}\int_{-1}^1 &-x^n\log (1-x)\, dx\nonumber\\
&=\bigl[x^n(1-x)\log (1-x)]_{-1}^1+\int_{-1}^1 x^ndx-n\int_{-1}^1x^{n-1}(1-x)\log (1-x)\, dx\nonumber\\
&=-(-1)^n2\log 2+{\frac{1-(-1)^{n+1}}{n+1}}-n\int_{-1}^n x^{n-1}\log (1-x)\, dx+n\int_{-1}^1 x^n\log (1-x)\, dx,\end{align}
so by solving the recurrence relation
\begin{equation}-(n+1)\int_{-1}^1 x^n\log (1-x)\, dx=-n\int_{-1}^1 x^{n-1}\log (1-x)\, dx-(-1)^n2\log 2+{\frac{1-(-1)^{n+1}}{n}},\end{equation}
we obtain 
\begin{equation}-(n+1)\int_{-1}^1x^n\log (1-x)\, dx=-\int_{-1}^1\log (1-x)\, dx-2\log 2\sum_{m=1}^n (-1)^m +\sum_{m=1}^n{\frac{1-(-1)^{m+1}}{m+1}};\end{equation}
hence 
\begin{align}\int_{-1}^1 x^nw(x)\, dx&={\frac{1-(-1)^{n+1}}{n+1}}\log (2\kappa )\nonumber\\
&\quad +{\frac{1}{n+1}}\Bigl( 2-2\log 2+(1+(-1)^{n+1})\log 2+\sum_{m=1}^n{\frac{1-(-1)^{m+1}}{m+1}}\Bigr);\end{align}
the final term includes the sum
\begin{equation}\sum_{m=1}^n{\frac{1-(-1)^{m+1}}{m+1}} =\log n+\log 2+\gamma +o(1)\qquad (n\rightarrow\infty ),\end{equation}
\noindent where here $\gamma$ is Euler's constant. Hence the Cauchy transform
\begin{equation}G(z)=\int_{-1}^1 {\frac{w(t)\, dt}{z-t}}=\sum_{n=0}^\infty {\frac{\mu_n}{z^{n+1}}}\end{equation}
\noindent diverges at some points with $\vert z\vert =1$. Also $\sum_{j=0}^\infty j\mu_{j}^2$ diverges, so the Hankel moment matrix is not Hilbert--Schmidt.\par
\indent Nevertheless, the Hankel moment matrix $[\mu_{j+m}]_{j,m=0}^\infty$ defines a bounded linear operator on $\ell^2$. To see this, we transform to Hankel integral operators on $L^2(0, \infty )$ via the Laguerre functions. With $J_0$ standing for Bessel's function of the first kind of order zero, the orthonormal Laguerre functions in $L^2(0, \infty )$ satisfy   
$$e^{-x/2}L_n(x)={\frac{e^{x/2}}{n!}}\int_0^\infty t^nJ_0(2\sqrt{xt}) e^{-t}\, dt.$$
We introduce the scattering function
$$\phi(x)=\sum_{n=0}^\infty \mu_ne^{-x/2}L_n(x)$$
which we can express as an integral
\begin{align}\phi(x)&=e^{x/2}\sum_{n=0}^\infty \int_0^\infty {\frac{t^n\mu_n}{n!}}J_0(2\sqrt{xt}) e^{-t}\, dt\\
&=e^{x/2}\sum_{n=0}^\infty \int_0^\infty\int_{-1}^1 v^nw(v)\, dv {\frac{t^n}{n!}}J_0(2\sqrt{xt}) e^{-t}\, dt\\
&=e^{x/2}\int_0^\infty\int_{-1}^1\sum_{n=0}^\infty  v^n{\frac{t^n}{n!}}w(v)\, dv J_0(2\sqrt{xt}) e^{-t}\, dt\\
&=e^{x/2}\int_{-1}^1 \int_0^\infty e^{vt}J_0(2\sqrt{xt})e^{-t}\, dt w(v)\, dv.\end{align}
By Webber's integral [GR, 6.63(4); E ITv1,4.14(25), p. 185], the inside integral is 
\begin{equation}\int_0^\infty e^{-t+vt}J_0(2\sqrt{xt})\, dt={\frac{1}{1-v}}e^{-x/(1-v)},\end{equation}
so we have, on substituting $s=1/(1-v)$,
\begin{align} \phi (x)&=e^{x/2}\int_{-1}^1 w(v)e^{-x/(1-v)}\, {\frac{dv}{1-v}}\\
&=e^{x/2}\int_{1/2}^\infty\log (2\kappa s) e^{-sx}{\frac{ds}{s}}.\end{align}
It is convenient to introduce the incomplete Gamma function
\begin{equation}\phi_\nu (x)=e^{x/2}\int_{1/2}^\infty s^{\nu-1} e^{-xs}\, ds\end{equation}
and write
\begin{equation}\phi (x)=\log (2\kappa )\phi_0(x)+\Bigl({\frac{\partial }{\partial \nu}}\Bigr)_{\nu=0}\phi_\nu (x)\end{equation}
\indent We now express $\phi_\nu$ as the scattering function of a continuous time linear system. Let the state space be $H=L^2(0, \infty )$, with dense linear subspace ${\mathcal D}(A)=\{ f\in H: tf(t)\in H\}$. Then for $-1/2\leq \Re\nu < 1/2$, we introduce the linear system $(-A,B_\nu ,C)$ by
\begin{align} A: {\mathcal D}(A)\rightarrow H:&\qquad f\mapsto tf(t)\qquad (f\in {\mathcal D}(A))\\
B_\nu:{\mathbb C}\rightarrow H: &\qquad b\mapsto b(1/2+t)^{\nu-1}b\qquad (b\in {\mathbb C})\\
C: {\mathcal D}(A)\rightarrow {\mathbb C}: &\qquad f\mapsto \int_0^\infty f(t)\, dt\qquad (f\in {\mathcal D}(A));\end{align}
the corresponding scattering function is
\begin{equation}\phi_\nu (x)=Ce^{-xA}B_\nu=\int_0^\infty (1/2+t)^{\nu-1} e^{-xt}\, dt.\end{equation}
Then we introduce the operator
\begin{equation}R_\nu =\int_0^\infty e^{-xA}B_\nu Ce^{-xA}\, dx,\end{equation}
which is the integral operator on $L^2(0,\infty )$ that has kernel
\begin{equation}R_\nu (\tau, t)={\frac{(1/2+\tau)^\nu}{(1/2+\tau )}}{\frac{1}{t+\tau }}.\end{equation}
Evidently $R_\nu$ is the composition of Hilbert's Hankel operator with kernel $1/(\tau +t)$ and multiplication by $(1/2+\tau )^{\nu -1},$ so $R_\nu$ is bounded on $L^2(0, \infty ).$ Operators of this form were considered by Howland [16].
\end{proof}
\end{section}
\begin{section}{Application of equilibrium problem to linear statistics}
In this section we consider
\begin{equation}{\mathbb E} e^{-\sum f}={\frac{\int_{(0,b)^n} \exp (-\sum_{j=1}^n f(x_j)) \prod_{1\leq j<k\leq n} (x_j-x_k)^2\prod_{j=1}^n w_0(x_j)dx_j}{
\int_{(0,b)^n} \prod_{1\leq j<k\leq n} (x_j-x_k)^2\prod_{j=1}^n w_0(x_j)dx_j}}\end{equation} 
with particular emphasis on $w_0=e^{-nv_0}$ where $v_0\in C^2$ is convex and $f(x)=\vartheta (x-t)$ is a step function. Then the exponent in numerator of the expression (\ref{linstat}) involves
\begin{equation} {\frac{1}{n^2}}\sum_{j=1}^n f(x_j)+ {\frac{1}{n}}\sum_{j=1}^n v_0(x_j)-{\frac{1}{n^2}}\sum_{1\leq j,k\leq n:j\neq k} \log\vert x_j-x_k\vert.\end{equation}
We regard this as the electrostatic energy associated with $n$ positive and equal charges on a line, subject to an electrical field.  The following result [26] extends a familiar result to the case of discontinuous fields.\par

\begin{lem}{Let $\Sigma $ be a closed subset of the Riemann sphere and let $v: \Sigma\rightarrow {\Rb}\cup \{ \pm \infty\}$ be lower semi continuous, $v<\infty$ on a set of positive logarithmic capacity and suppose that there exists $c>0$ such that $v(z)\geq c\log \vert z\vert$ as $z\rightarrow \infty$ for $z\in \Sigma $. Then the minimization problem in the collection of all probability measures on $\Sigma$,
\begin{equation} \inf_\sigma\Bigl\{ \int_\Sigma v(x)\sigma (dx)+\int\!\!\!\int_{\Sigma\times \Sigma} \log {\frac{1}{ \vert x-y\vert}}\, \sigma (dx)\sigma (dy)\Bigr\}\end{equation}
has a unique minimizer $\sigma$, with support $S\subseteq \Sigma$. Furthermore, there exists $C\in \Rb$ such that 
\[v(x)=2\int_S\log \vert x-y\vert \sigma (dy)+C\]
for quasi almost all $x$ in $S$.}\end{lem}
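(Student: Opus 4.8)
The plan is to run the classical Gauss--Frostman variational argument of weighted logarithmic potential theory, in the lower semicontinuous generality of [26], keeping careful track of the fact that $v$ may equal $+\infty$ on part of $\Sigma$. Throughout write $U^\sigma(x)=\int_\Sigma\log\frac{1}{|x-y|}\,d\sigma(y)$ for the logarithmic potential of a probability measure $\sigma$ on $\Sigma$, and $I(\sigma)=\int_\Sigma v\,d\sigma+\iint_{\Sigma\times\Sigma}\log\frac{1}{|x-y|}\,d\sigma(x)\,d\sigma(y)=\int_\Sigma(v+U^\sigma)\,d\sigma$ for the weighted energy to be minimized; it is convenient to use the spherical metric on the Riemann sphere, so that the kernel is bounded below.

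First I would show that the infimum $V$ is a finite real number that is attained. Since $\Sigma$ is closed in the compact sphere it is compact, and the lower semicontinuous function $v$ is therefore bounded below on it; together with the lower bound on the (spherical) kernel and the hypothesis $v(z)\ge c\log|z|$, which dominates the logarithmic singularity of the kernel near $\infty$, this shows $I$ is bounded below. Because $\{v<\infty\}$ has positive capacity it carries a probability measure of finite logarithmic energy on which $\int v$ is finite, so $V<\infty$. For a minimizing sequence $(\sigma_n)$, weak-$*$ compactness of the probability measures on the compact sphere produces a subsequential limit $\sigma$, and the energy functional is lower semicontinuous under weak-$*$ convergence (the principle of descent, applied to the lower semicontinuous, bounded-below kernel $\log\frac{1}{|x-y|}+\tfrac12 v(x)+\tfrac12 v(y)$), so $I(\sigma)\le\liminf_n I(\sigma_n)=V$ and $\sigma$ is a minimizer. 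The growth of $v$ at $\infty$ penalizes mass there, so every minimizer is supported on a fixed compact subset $S\subseteq\Sigma\cap\Cb$.

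Uniqueness follows from strict convexity. A minimizer has finite $I(\sigma)$ and, since $v$ is bounded below, finite logarithmic energy; so if $\sigma_1\ne\sigma_2$ were two minimizers, the total-mass-zero signed measure $\sigma_1-\sigma_2$ would have finite and, by the energy principle, strictly positive logarithmic energy. Since $\sigma\mapsto\int v\,d\sigma$ is affine, $I$ would then be strictly convex on the segment joining $\sigma_1$ and $\sigma_2$, contradicting that both are global minima; hence the minimizer $\sigma$ is unique.

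For the equilibrium identity I would compute the first variation: for a suitable comparison measure $\tau$ on $\Sigma$ (of finite energy, not charging $\{v=+\infty\}$, and with $\int v\,d\tau$ finite), the curve $\varepsilon\mapsto I\bigl((1-\varepsilon)\sigma+\varepsilon\tau\bigr)$ is minimized at $\varepsilon=0$, so its one-sided derivative there is nonnegative, which on expanding gives $\int_\Sigma(v+2U^\sigma)\,d\tau\ge\int_\Sigma(v+2U^\sigma)\,d\sigma=:C$. Choosing $\tau$ concentrated on a prescribed Borel set of positive capacity (intersected with $\{v\le N\}$ for large $N$) then forces $v+2U^\sigma\ge C$ quasi-everywhere on $\Sigma$; choosing $\tau$ absolutely continuous with respect to $\sigma$ forces $v+2U^\sigma\le C$ $\sigma$-almost everywhere; and the lower semicontinuity of $U^\sigma$ (hence of $v+2U^\sigma$), together with the fact that $\sigma$ assigns no mass to polar sets, upgrades the latter to $v+2U^\sigma\le C$ quasi-everywhere on $S$. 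Combining the two bounds on $S$ yields $v(x)+2U^\sigma(x)=C$ for quasi every $x\in S$, that is, $v(x)=2\int_S\log|x-y|\,d\sigma(y)+C$ with $C=\int_\Sigma(v+2U^\sigma)\,d\sigma$. The step I expect to be the main obstacle is exactly this last upgrade from ``$\sigma$-almost everywhere'' to ``quasi-everywhere on $S$'', and more broadly the care forced by $v$ being merely lower semicontinuous and possibly infinite: one must keep the comparison measures off $\{v=+\infty\}$ and of finite energy with $\int v\,d\tau<\infty$, and verify that both the quasi-everywhere lower bound on $\Sigma$ and its sharpening on $S$ genuinely go through. The remaining compactness, convexity, and Euler--Lagrange manipulations are routine.
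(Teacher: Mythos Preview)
Your proposal is a careful and essentially correct sketch of the classical Gauss--Frostman variational argument for the weighted logarithmic energy problem, and it is exactly the kind of proof one would find in the reference the paper invokes. The paper itself, however, gives no proof at all: the lemma is stated as a quotation of [26] (Simeonov) and is used as a black box, so there is no argument in the paper to compare against. In that sense you have supplied substantially more than the paper does.

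A brief remark on content rather than comparison: your existence, uniqueness, and first-variation steps are the standard ones and are correctly outlined, and you rightly flag the passage from ``$\sigma$-almost everywhere'' to ``quasi-everywhere on $S$'' as the delicate point. One place where you should be slightly more careful is the claim that the growth condition $v(z)\ge c\log|z|$ with merely $c>0$ forces the minimizer to be compactly supported in $\Sigma\cap\mathbb{C}$: with the normalization used in the lemma (the field $v$ enters linearly while the logarithmic kernel enters quadratically, and the equilibrium identity reads $v=2\int\log|\cdot-y|\,d\sigma+C$), the admissibility threshold in the Saff--Totik sense is effectively $c>2$, not $c>0$. This is arguably a looseness in the lemma's statement rather than in your argument, but since you invoke it to pin the support away from $\infty$, you should either check that the intended application has $c$ large enough or note the discrepancy.
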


\indent Let $v_0$ be $C^2$ and convex, with $v_0(x)\geq c\log (1/x)$  as $x\rightarrow 0+$ and $v_0(x)\geq c\log x$ as $x\rightarrow\infty$ for some $c>0$. Then there exists a probability measure $\sigma_0$ supported on $[a,b]\subset (0, \infty )$ and  constant $C$ such that 
$$v_0(x)\geq 2\int_a^b \log \vert x-y\vert \sigma_0(dy)+C$$
with equality for all $x\in (a,b)$. We replace the weight $w_0$ by $w$, the potential $v_0$ by $v=v_0+\beta f$, hence $\sigma_0$ by $\sigma=\sigma_0+\rho$, where $\int \rho=0$, and consider the integral equation  
\begin{equation}\label{defnrho}f(x)=2\int_a^b \log \vert x-y\vert \rho (y)dy+c_1.\end{equation}
The probability density of the linear statistic has mean
\begin{equation}\label{S2} S_2=\int_a^b f(x)\sigma_0(x)dx\end{equation}
and variance
\begin{equation}\label{S1}S_1={\frac{1}{4\pi^2}}\int_a^b\int_a^b {\frac{f(x)}{\sqrt{(x-a)(b-x)}}}{\frac{\partial }{\partial y}}\Bigl({\frac{\sqrt{(b-y)(y-a)}}{x-y}}\Bigr)\, f(y)\, dydx.\end{equation} 

To compute $S_2$, one uses Fourier series. 
\begin{lem}\label{Fourier} For $n\in {\mathbb N}$, let
$(\pi /2)a_n=\int_0^{\pi}v_0( {\frac{a+b}{2}}+{\frac{b-a}{2}}\cos\theta )\cos n\theta d\theta$. Suppose that $(na_n)\in \ell^2$, and $f$ is bounded. Then
\begin{equation}S_2=\int_0^{\pi}f\Bigl( {\frac{a+b}{2}}+{\frac{b-a}{2}}\cos\theta \Bigr){\frac{d\theta}{\pi}}-\sum_{n=1}^\infty na_n\int_0^{\pi}f\Bigl( {\frac{a+b}{2}}+{\frac{b-a}{2}}\cos\theta \Bigr)\cos n\theta\, d\theta.\end{equation}
\end{lem}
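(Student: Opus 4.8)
The plan is to produce an explicit formula for the equilibrium density $\sigma_0$ and then pair it with $f$ term by term. The substitution $x=\tfrac{a+b}{2}+\tfrac{b-a}{2}\cos\theta$ carries $[a,b]$ onto $[0,\pi]$; set $g(\theta)=v_0\!\left(\tfrac{a+b}{2}+\tfrac{b-a}{2}\cos\theta\right)$, which is even and $2\pi$-periodic, with cosine--Fourier series $g(\theta)=\tfrac{a_0}{2}+\sum_{n\ge1}a_n\cos n\theta$ (here the $a_n$ are the ordinary Fourier cosine coefficients $a_n=\tfrac2\pi\int_0^\pi g(\theta)\cos n\theta\,d\theta$, i.e.\ those of the statement). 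Under this substitution the pushforward of $\sigma_0(dx)$ is $\rho_0(\theta)\,d\theta$ on $[0,\pi]$, with $\rho_0(\theta)=\tfrac{b-a}{2}\sin\theta\cdot\sigma_0\!\left(\tfrac{a+b}{2}+\tfrac{b-a}{2}\cos\theta\right)$ and $\int_0^\pi\rho_0=1$, and $S_2=\int_0^\pi f\!\left(\tfrac{a+b}{2}+\tfrac{b-a}{2}\cos\theta\right)\rho_0(\theta)\,d\theta$, so the whole problem reduces to finding the cosine--Fourier expansion of $\rho_0$.

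To get $\rho_0$ I would differentiate the Euler--Lagrange equation of the equilibrium Lemma, $v_0(x)=2\int_a^b\log|x-y|\,\sigma_0(dy)+C$ for $x\in(a,b)$, obtaining the finite Hilbert transform (airfoil) equation $v_0'(x)=2\,\pv\!\int_a^b\frac{\sigma_0(y)}{x-y}\,dy$. In the $\theta$ variable this becomes $v_0'\!\left(\tfrac{a+b}{2}+\tfrac{b-a}{2}\cos\phi\right)=\frac{4}{b-a}\,\pv\!\int_0^\pi\frac{\rho_0(\psi)}{\cos\phi-\cos\psi}\,d\psi$. The key analytic input is the classical Chebyshev identity $\pv\!\int_0^\pi\frac{\cos n\psi}{\cos\phi-\cos\psi}\,d\psi=-\pi\frac{\sin n\phi}{\sin\phi}$ for $n\ge1$, and $0$ for $n=0$ --- that is, the finite Hilbert transform diagonalises on Chebyshev polynomials. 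Writing $\rho_0(\psi)=\tfrac1\pi+\sum_{n\ge1}q_n\cos n\psi$ --- the constant term being forced by $\int_0^\pi\rho_0=1$ --- and multiplying through by $-\tfrac{b-a}{2}\sin\phi=\tfrac{dx}{d\phi}$, the right-hand side collapses to the pure sine series $2\pi\sum_{n\ge1}q_n\sin n\phi$, while the left-hand side is $\tfrac{d}{d\phi}g(\phi)=-\sum_{n\ge1}na_n\sin n\phi$. Matching Fourier sine coefficients gives $q_n=-\tfrac{na_n}{2\pi}$, hence $\rho_0(\theta)=\tfrac1\pi-\tfrac{1}{2\pi}\sum_{n\ge1}na_n\cos n\theta$; substituting into $S_2=\int_0^\pi f\rho_0$ and integrating term by term yields the identity in the statement (with the normalisation constants to be reconciled against the definition of $a_n$ quoted there).

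Two points need care, and the second is the real obstacle. The interchange of summation and integration in the last step is harmless because $(na_n)\in\ell^2$ makes the cosine series for $\rho_0$ converge in $L^2(0,\pi)$, while $f$ bounded puts $\theta\mapsto f\!\left(\tfrac{a+b}{2}+\tfrac{b-a}{2}\cos\theta\right)$ in $L^2(0,\pi)$, so the pairing is a genuine convergent inner product. The harder matter is justifying that the density produced by this coefficient matching \emph{is} the equilibrium measure: this rests on the uniqueness of the minimiser supplied by the equilibrium Lemma together with the hypotheses that $v_0$ is $C^2$ and convex, which force the support to be a single interval $[a,b]$ and $\sigma_0$ to vanish like a square root at $a$ and $b$ (a soft edge) --- exactly the regime in which the airfoil equation has a unique unit-mass solution with no spurious $1/\sqrt{(x-a)(b-x)}$ term. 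The endpoints $a,b$ are themselves pinned down by $\rho_0\ge0$ together with $\rho_0(0)=\rho_0(\pi)=0$, equivalently two linear conditions on the sequence $(na_n)$ (note $\rho_0'(0)=\rho_0'(\pi)=0$ automatically, so square-root vanishing of $\sigma_0$ at the edges follows once $\rho_0(0)=\rho_0(\pi)=0$); one should check these are consistent with the growth and convexity hypotheses on $v_0$, and that all the principal-value manipulations and the termwise use of the Chebyshev identity remain legitimate under the sole assumption $(na_n)\in\ell^2$ rather than, say, real-analyticity of $v_0$.
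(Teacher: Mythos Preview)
Your approach is correct and is essentially the paper's own argument: both change variables to $\theta$, expand $v_0$ in the cosine basis, and use the Chebyshev identity $\pv\int_0^\pi\frac{\cos n\psi}{\cos\phi-\cos\psi}\,d\psi=-\pi\frac{\sin n\phi}{\sin\phi}$ to read off the Fourier coefficients of $\rho_0$. The only cosmetic difference is that the paper works with the undifferentiated log kernel, computing $h_n(\phi)=\int_0^\pi\log|\cos\phi-\cos\theta|\cos n\theta\,d\theta$ by differentiating, applying the same identity, and integrating back with the constant fixed at $\phi=\pi/2$, whereas you pass first to the airfoil equation and recover the constant term via $\int_0^\pi\rho_0=1$; your discussion of $L^2$ convergence under $(na_n)\in\ell^2$ and of uniqueness of the equilibrium measure is actually more careful than what the paper writes.
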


\begin{proof} We write 
\begin{equation}v_0\Bigl( {\frac{a+b}{2}}+{\frac{b-a}{2}}\cos\theta \Bigr) =\sum_{n=0}^\infty a_n\cos n\theta.\end{equation}
Then for $n\in {\mathbb N}$, we let
\begin{equation}h_n(\phi )=\int_0^\pi \log\vert \cos\phi-\cos\theta\vert \cos n\theta \, d\theta\end{equation}
so
\begin{equation}h_n'(\phi )=-\sin\phi \,{\hbox{p.v.}}\int_0^\pi{\frac{\cos n\theta}{\cos\phi -\cos \theta}} d\theta =\sin n\phi.\end{equation}
For $n$ even,  $h_n(\pi /2)$ is given by
\begin{equation}\int_0^\pi \log\vert\cos\theta\vert \cos (2k\theta )\, d\theta=2^{-1}\int_0^{2\pi} \log\vert \cos (x/2)\vert \cos kx\, dx={\frac{(-1)^k\pi}{2k}},\end{equation}
whereas for $n$ odd, $h_n(\pi /2)$ is given by
\begin{equation}\int_0^\pi \log\vert\cos\theta\vert \cos (2k+1)\theta\, d\theta=0;\end{equation}
so 
\begin{equation}h_n(\pi/2)-h_n(\phi)=\int_\phi^{\pi/2}\sin n\psi\, d\psi =-{\frac{\cos n\pi/2}{n}}+{\frac{\cos n\phi}{n}},\end{equation}
so $h_n(\phi )={\frac{-1}{n}}\cos n\phi$. We can therefore write a solution of the extremal problem as 
\begin{equation}\sigma_0 \Bigl( {\frac{a+b}{2}}+{\frac{b-a}{2}}\cos\theta \Bigr) \vert\sin \theta \vert ={\frac{1}{\pi}}-\sum_{n=1}^\infty na_n\cos n\theta\qquad (0<\theta <\pi )\end{equation}
which by hypothesis is $L^2$ convergent, and then we establish the equality
\begin{align}S_2 &=\int_0^{\pi}f\Bigl( {\frac{a+b}{2}}+{\frac{b-a}{2}}\cos\theta \Bigr)\sigma_0 \Bigl( {\frac{a+b}{2}}+{\frac{b-a}{2}}\cos\theta \Bigr)\sin\theta \, d\theta\nonumber\\
&=\int_0^{\pi}f\Bigl( {\frac{a+b}{2}}+{\frac{b-a}{2}}\cos\theta \Bigr){\frac{d\theta}{\pi}}-\sum_{n=1}^\infty na_n\int_0^{\pi}f\Bigl( {\frac{a+b}{2}}+{\frac{b-a}{2}}\cos\theta \Bigr)\cos n\theta\, d\theta\end{align} 
\end{proof}

\begin{ex} (i) In the context of (\ref{Dtheta}) Let $[a,b]=[-1,1]$ and for $t\in [-1,1]$ let $f(x)=\vartheta (x-t)$, 
and let $U_n$ is the Chebyshev polynomial of the second kind of degree $n$ such that 
$U_n(\cos\phi )=\sin (n+1)\phi/\sin\phi$.
Then by the Lemma \ref{Fourier}, we have
$$S_2={\frac{\cos^{-1} t}{\pi}}-\sum_{n=1}^\infty \sqrt{1-t^2}U_{n-1}(t)(2/\pi )\int_0^\pi v(\cos \theta)\cos n\theta \, d\theta.$$ 
(ii) In the context of (\ref{newweight})Let $f(x)=\pi^{-1} \tan^{-1} ((x-t)/\varepsilon )$, and consider $-1<t<1$ for $[a,b]=[-1,1]$;  with $z_{\pm}=t\pm i\varepsilon$. Then by considering the integrals 
\begin{equation} {\hbox{p.v.}}\int_{-1}^1 {\frac{\sqrt{1-y^2}}{y-x}}{\frac{dy}{y-z}}=-\pi +{\frac{\pi\sqrt{z^2-1}}{z-x}}\qquad (z\in {\mathbb C}\setminus [-1,1])\end{equation}
where the branch of the square root is chosen so that the integrals converge to zero as $z\rightarrow\infty$, we deduce that $\rho$ from (\ref{defnrho}) satisfies 
\begin{equation} \rho (x)={\frac{\sqrt{1-x^2}}{2\pi^2} }{\frac{1}{2i}}\Bigl( {\frac{-1}{(x-z_+)\sqrt{z_+^2-1}}} +{\frac{1}{(x-z_-)\sqrt{z_-^2-1}}}\Bigr) +{\frac{C}{\sqrt{1-x^2}}},\end{equation}
where $C$ is chosen so that $\int_{-1}^1 \rho (x)\, dx=0.$ As we cross $[-1,1]$, the square root changes sign.\par

\end{ex}

\begin{ex} For $f\in L^\infty$, we consider
\begin{equation}f(x)=\int_0^\infty \log\vert x-y\vert \rho (y)\, dy+c_1\end{equation}
so
\begin{equation}f(0)=\int_0^\infty\log \vert y\vert\rho(y)\, dy+c_1\end{equation}
and subtracting, we have a Mellin convolution
\begin{equation}f(x)-f(0)=\int_0^\infty \log \Bigl\vert {\frac{x}{y}}-1\Bigr\vert y\rho (y){\frac{dy}{y}},\end{equation}
so
\begin{equation}M(f(x)-f(0); s)=-{\frac{\pi}{s}}\tan \pi (s+1/2)M(x\rho  (x);s)\end{equation}
so 
\begin{equation}M(x\rho (x); s)=s^2\Bigl( -{\frac{\tan \pi s}{\pi s}}\Bigr)M(f(x)-f(0);s).\end{equation}
We deduce that 
\begin{equation}x\rho (x)={\frac{1}{\pi^2}}\Bigl( x{\frac{d}{dx}}\Bigr)^2\int_0^\infty 
y(f(y)-f(0)) \log {\frac{\sqrt{x}+\sqrt{y}}{\vert\sqrt{x}-{\sqrt{y}}\vert}}\, {\frac{dy}{y}}.\end{equation}
Now we let 
\begin{equation}g_2(x)= \Bigl( x{\frac{d}{dx}}\Bigr)^2\log {\frac{\sqrt{x}+1}{\vert\sqrt{x}-1\vert}}= {\frac{\sqrt{x}(x+1)}{2(x-1)^2}}\end{equation}
which has Mellin transform
\begin{equation}M(g_2; s)= {\frac{-s\tan \pi s}{\pi }}.\end{equation}
Hence by the Plancherel formula for the Mellin transform
\begin{align}\int_0^\infty xf(x) (g_2\ast f)(x)dx&={\frac{1}{2\pi i}}\int_{-i\infty}^{i\infty}  M(g_2, s)M(f;s)M(f;-s)ds
\\
&={\frac{1}{2\pi^2}}\int_{-\infty}^\infty \xi \tanh (\pi \xi) M(f;i\xi )M(f; -i\xi )\, d\xi.\end{align}
One compares this formula with Proposition \ref{commutingdiagram}.

\end{ex}

\end{section}

\begin{section}{References}

\vskip.05in
\noindent [1] E.L. Basor, Distribution functions for random variables for ensembles of positive Hermitian matrices, {\sl Comm. Math. Phys.} {\bf 188} (1997), 327-350.\par
\noindent [2] E.L. Basor and Y. Chen,  A note on Wiener--Hopf determinants and the Borodin--Okounkov identity, {\sl Integral Equations and Operator Theory}, {\bf 45} (2003), 301-308.\par
\noindent [3] E.L. Basor and Y. Chen, Toeplitz determinants from compatibility conditions, {\sl Ramanujan Math. J.} {\bf 16} (2008), 25-40.\par 
\noindent [4] E.L. Basor, Y. Chen and H. Widom, Hankel determinants and Fredholm determinants, pp 21-29, {\sl Random matrix models and their applications} , (MSRI Publications, Cambridge University Press).\par
\noindent [5] E.L. Basor and T. Ehrhardt, Asymptotics of determinants of Bessel operators, {\sl Comm. Math. Phys.} {\bf 234} (2003), 491-516.\par
\noindent [6] E.L. Basor and C.A. Tracy, Variance calculations and the Bessel kernel, {\sl J. Statistical Physics}  {\bf 73}, 415-421.\par
\noindent [7] Y. Chen and A.R. Its, A Riemann--Hilbert approach to the Akhiezer polynomials, {\sl Phil. Trans. R. Soc. A} {\bf 366} (2008), 973-1003.\par
\noindent [8] Y. Chen and N. Lawrence, On the linear statistics of Hermitian random matrices, {\sl J. Phys. A} {\bf 31} (1998), 1141-1152.\par
\noindent [9] G. Blower, On linear systems and $\tau$-functions associated with Lam\'e's equation and Painlev\'e's equation VI, {\sl J. Math. Anal. Appl.} (2011), 294-316.\par
\noindent [10] N. Ercolani and H.P. McKean, Geometry of KDV(4): Abel sums, Jacobi variety and theta functions in the scattering case, {\sl Invent. Math.} {\bf 99} (1990), 483-544.\par 
\noindent [11] A. Erdelyi, {\sl Higher Transcendental Functions} Volume 1\par

\noindent [12] A.S. Fokas, A.R. Its, A.A. Kapaev and V.Y. Novokshenov, {\sl Painlev\'e Transcendents. The Riemann-Hilbert approach.} (American Mathematical Society, 2006).\par 
\noindent [13] E. Heine, {\sl Handbuch der Kugelfunctionen: Theorie und Anwendungen}, (Physica Verlag, W\"urzbug, 1961)\par 
\noindent [14] J.W. Helton and R.E. Howe, Integral operators, commutators, traces, index and homology, pp 141-209 in Lecture Notes in Mathematics 345, Springer, 1973.\par 
\noindent [15] N. Hitchin, A lecture on the octahedron, {\sl Bull. London Math. Soc} {\bf 35} (2003), 577-600.\par
\noindent [16] J.S. Howland, Spectral theory of operators of Hankel type II, {\sl Indiana Univ. Math. J.} {\bf 41} (1992), 427-434.\par
\noindent [17] M. Jimbo and T. Miwa, Monodromy preserving deformation of linear ordinary differential equations with rational coefficients. II. {\sl Physica D} {\bf 2} (1981), 407-448.\par  
\noindent [18] A.V. Kitaev and D.A. Korotkin, Solutions of the Schlesinger equations in terms of $\Theta$-functions, {\sl Internat. Math. Res. Notices} {\bf 17} 1998, 877-905.\par 
\noindent [19] A.P. Magnus, Painlev\'e type differential equations for the recurrence coefficients of semi-classical orthogonal polynomials, {\sl J. Comput. Appl. Math.} {\bf 57} (1995), 215-237.\par
\noindent [20] A.V. Megretskii, V.V. Peller, and S.R. Treil, The inverse spectral problem for self-adjoint Hankel operators,  {\sl Acta Math.} {\bf 174} (1995), 241-309.\par
\noindent [21] C. Min and Y. Chen, Gap probability distribution of the Jacobi unitary ensemble: elementary treatment, from finite $n$ to double scaling, {\sl Stud. Appl. Math.} {\bf 140} (2018), 202-220.\par
\noindent [22] J. Palmer, Deformation analysis of matrix models, {\sl Physica D} {\bf 78} (1994), 166-185.\par
\noindent [23] J.R. Partington and G. Weiss, Admissible observation operators for the right-shift semigroup, {\sl Math. Control Signals Systems} {\bf 13} (2000), 179-192.\par 
\noindent [24] V.V. Peller, {\sl Hankel Operators and their Applications}, (Springer, 2003).\par 
\noindent [25] M. van der Put and M.F. Singer, {\sl Galois theory of linear differential equations}, (Springer, 2003).\par 
\noindent [26] P. Simenonov, A weighted energy problem for a class of admissible weights, {\sl Houston J. Math.} {\bf 31} (2005), 1245-1260.\par
\noindent [27] V.I. Smirnov,  {\sl Integral Equations and Partial Differential Equations}, volume IV, (Pergamon, 1964).\par
\noindent [28] I.N. Sneddon, {\sl The use of integral transforms}, (McGraw--Hill, 1972).\par 
\noindent [29] J.L. Taylor, Banach algebras and topology pp 118--186 in {\sl Algebras in Analysis} J.R. Williamson (edr), (Academic Press, 1975).\par
\noindent [30] E.C. Titchmarsh, {\sl Introduction to the theory of Fourier integrals} (third edition) (Oxford University Press, 1986).\par

\noindent [31] C.A. Tracy and H. Widom, Level spacing distributions and the Bessel kernel, {\sl Comm. Math. Phys.} {\bf 161} (1994), 289--309.\par
\noindent [32] C.A. Tracy and H. Widom, Fredholm determinants, differential equations and matrix models, {\sl Comm. Math. Phys.} {\bf 163} (1994), 33-72.\par
\noindent [33] K. Trim\'eche, Transformation int\'egrale de Weyl et th\'eor\`eme de Paley-Wiener associ\'es \`a un op\'erateur diff\'erentiel singulier sur $(0,\infty )$, {\sl J. Math. Pures Appl.} {\bf 60} (1981), 51-98.\par
\noindent [34] J. Wermer, {\sl Banach Algebras and Several Complex Variables}, (second edition) (Springer, 1976).\par
\end{section}
\end{document}